\numberwithin{equation}{section}
\newcommand{\bE}{\mathbb{E}}
\newcommand{\bM}{\mathbb{M}}
\newcommand{\bN}{\mathbb{N}}
\newcommand{\bP}{\mathbb{P}}
\newcommand{\bQ}{\mathbb{Q}}
\newcommand{\bR}{\mathbb{R}}
\newcommand{\bH}{\mathbb{H}}
\newcommand{\cA}{\mathcal{A}}
\newcommand{\cB}{\mathcal{B}}
\newcommand{\cC}{\mathcal{C}}
\newcommand{\cF}{\mathcal{F}}
\newcommand{\cG}{\mathcal{G}}
\newcommand{\cH}{\mathcal{H}}
\newcommand{\cI}{\mathcal{I}}
\newcommand{\cP}{\mathcal{P}}
\newcommand{\cR}{\mathcal{R}}
\newcommand{\cS}{\mathcal{S}}
\newcommand{\cT}{\mathcal{T}}
\newcommand{\EE}{\mathbf{E}}
\newcommand{\II}{\mathbf{I}}
\newcommand{\JJ}{\mathbf{J}}
\newcommand{\SSS}{\mathbf{S}}
\newcommand{\RR}{\mathbf{R}}
\newcommand{\TT}{\mathbf{T}}
\newcommand{\aaa}{\mathbf{a}}
\newcommand{\bbb}{\mathbf{b}}
\newcommand{\ccc}{\mathbf{c}}
\newcommand{\sss}{\mathbf{s}}
\newcommand{\ttt}{\mathbf{t}}
\newcommand{\www}{\mathbf{w}}
\theoremstyle{plain} \newtheorem{theorem}{Theorem}[section]
\theoremstyle{plain} \newtheorem{lemma}[theorem]{Lemma}
\theoremstyle{plain} \newtheorem{corollary}[theorem]{Corollary}
\theoremstyle{plain} \newtheorem{proposition}[theorem]{Proposition}
\theoremstyle{remark} \newtheorem{remark}[theorem]{Remark}
\theoremstyle{remark} \newtheorem{example}[theorem]{Example}
\theoremstyle{definition} \newtheorem{definition}[theorem]{Definition}
\theoremstyle{definition} 
\theoremstyle{definition} \newtheorem{hypothesis}[theorem]{Hypothesis}
\begin{document}

\title[Trickle-down processes]{Trickle-down processes and their boundaries}

\author{Steven N. Evans}
\address{Department of Statistics\\
         University  of California\\ 
         367 Evans Hall \#3860\\
         Berkeley, CA 94720-3860 \\
         U.S.A.}

\email{evans@stat.berkeley.edu}
\thanks{SNE supported in part by NSF grants DMS-0405778 and DMS-0907630}

\author{Rudolf Gr\"ubel}
\address{Institut f\"ur Mathematische Stochastik\\ 
         Leibniz Universit\"at Hannover\\
         Postfach 6009\\ 
         30060 Hannover\\
         Germany}

\email{rgrubel@stochastik.uni-hannover.de}

\author{Anton Wakolbinger}
\address{Institut f\"ur Mathematik \\
         Goethe-Universit\"at \\
         60054 Frankfurt am Main\\
         Germany}

\email{wakolbinger@math.uni-frankfurt.de}

\subjclass[2000]{Primary 60J50, secondary 60J10, 68W40}

\keywords{harmonic function, $h$-transform, tail $\sigma$-field, 
Poisson boundary, internal diffusion limited aggregation, binary search tree, 
digital search tree, Dirichlet random measure,
random recursive tree, Chinese restaurant process,  random partition,
Ewens sampling formula, Griffiths--Engen--McCloskey distribution,
Mallows model, $q$-binomial theorem, Catalan number, composition, quincunx}

\date{\today}

\begin{abstract}
It is possible to represent each of a number of Markov chains
as an evolving sequence of connected subsets of a directed acyclic graph that grow in
the following way: initially, all vertices of the
graph are unoccupied, particles are fed in one-by-one
at a distinguished source vertex, successive particles proceed along
directed edges according to an appropriate stochastic mechanism,
and each particle comes to rest once it encounters
an unoccupied vertex.  Examples include 
the binary and digital search tree processes, 
the random recursive tree process and generalizations of it
arising from nested instances of Pitman's two-parameter 
Chinese restaurant process,
tree-growth models associated with Mallows' $\phi$ model of random permutations
and with Sch\"utzenberger's non-commutative $q$-binomial theorem, and a 
construction due to Luczak and Winkler that grows uniform random binary trees
in a Markovian manner.
We introduce a framework that encompasses such Markov chains, 
and we characterize 
their asymptotic behavior by analyzing in detail their Doob-Martin compactifications,
Poisson boundaries and tail $\sigma$-fields.
\end{abstract}

\maketitle

\tableofcontents

\section{Introduction}
\label{S:intro}

Several stochastic processes appearing
in applied probability may be viewed
as growing connected subsets of
a directed acyclic graph that evolve according to the following
dynamics: initially, all vertices of the
graph are unoccupied, particles are fed in one-by-one
at a distinguished source vertex, successive particles proceed along
directed edges according to an appropriate stochastic mechanism,
and each particle comes to rest once it encounters
an unoccupied vertex.  If we picture the source vertex as being
at the ``top'' of the graph, then successive particles ``trickle down''
the graph until they find a vacant vertex that they can occupy.

We are interested in the question:  ``What is the asymptotic behavior of such
a (highly transient) set-valued Markov chain?''  For several of the
models we consider, any finite neighborhood of the source vertex
will, with probability one, be eventually  occupied by a particle
and so a rather unilluminating answer to our question is to say in
such cases that the sequence of sets converges to the entire vertex set $V$.
Implicit in the use of the term ``converges'' in this statement is
a particular topology on the collection of subsets of $V$;
we are embedding the space of
finite subsets of $V$ into the Cartesian product
$\{0,1\}^V$ and equipping the product space with the usual product topology.
A quest for more informative answers can therefore be thought
of as a search for an embedding of the state space
of the chain into a topological space with a richer class of possible limits.

An ideal embedding would be one such that the chain converged almost
surely to a limit and the $\sigma$-field generated by the limit coincided
with the tail $\sigma$-field of the chain up to null events. For 
trickle-down processes, the Doob-Martin compactification provides such 
an embedding, and so our aim is to develop a body of theory that enables
us to identify the compactification for at least some interesting examples.  
Moreover, a knowledge of the Doob-Martin compactification
allows us to determine, via the Doob $h$-transform construction, all
the ways in which it is possible, loosely speaking, to condition the
Markov chain to behave for large times. This allows us to construct 
interesting new processes from existing ones or recognize
that two familiar processes are related by such a conditioning.

A prime example of a Markov chain that fits into the
trickle-down framework is the {\em binary search tree (BST) process}, and
so we spend some time describing the BST process
in order to give the reader some concrete motivation for 
the  definitions we introduce later. 
The BST process and the related {\em digital search tree (DST)
processes} that we consider in Section~\ref{S:BST_and_DST}
arise from considering the behavior of tree-based
searching and sorting algorithms.\label{p:BST} 
The trickle-down mechanism is at the heart of both algorithms: the 
vertices of the complete rooted binary tree are regarded as  
potential locations for the storage of data values $x_1,x_2,\ldots$ 
that arrive sequentially in time. 
We interpret these values as labels of particles. 
The particles are fed in at the root vertex, which receives $x_1$, and they
are routed through the tree until a free vertex is found.  
How we travel onwards from an occupied vertex depends on the algorithm: 
in the BST case we assume that the input stream consists of real numbers 
and we compare the value $x$ to be inserted with the content 
$y$ of the occupied vertex, moving to the left 
or right depending on whether $x<y$ or $x>y$, whereas in the DST case the inputs 
$x_i$ are taken to be infinite 0-1 sequences, 
and we move from an occupied vertex of 
depth $k$ to its left or right child if the $k^{\mathrm{th}}$ component 
of $x_i$ is 0 or 1 respectively. If the input is random and we  
ignore the labeling of the vertices by elements of the input data sequence,
then we obtain a sequence of subtrees of the complete binary tree;
the $n$-th element of the sequence is the subtree consisting
of the vertices occupied by the first $n$ particles.

Binary trees in general and their role in the theory and practice of computer 
science are discussed in \cite{MR0286317}.  
Several tree-based sorting and searching
algorithms are described in \cite{MR0445948}. 
In particular, a class of trees 
(generalizing binary search trees as well as digital search
trees) with a construction similar  to our
trickle-down process is introduced in \cite{MR1634354}.
An introduction to the literature on tree-valued stochastic
processes arising in this connection is \cite{MR1140708}.
Historically, real valued functionals such as the path length or 
the insertion depth of the next item were investigated first, 
with an emphasis on the expected value for random input as a 
function of the amount of stored data 
(that is, of the number of vertices in the tree). 
In recent years, 
several infinite-dimensional random quantities related to the shape of 
the trees  such as the node depth profile 
\cite{MR1878289, MR2380900}, the subtree size profile 
\cite{DeGr3, MR2454562} and the silhouette \cite{MR2569807}
have been studied.

In the present paper we develop a  
framework for  trickle-down processes
that contains the BST and DST processes as special cases. 
As a consequence, we obtain limit results for the
sequence of random trees themselves, using a 
topology on the space of finite binary trees 
that is dictated by the underlying stochastic mechanism. 
We also establish distributional relationships; for example, 
we show that the Markov chains generated by the BST and the DST
algorithms are related via $h$-transforms -- see Theorem~\ref{T:DST_is_h-transform}.

In order to motivate our later formal
definition of trickle-down
processes, we now reconsider the BST process from
a slightly different point of view by 
moving away somewhat from the search tree application 
and starting with a bijection 
from classical enumerative combinatorics (see, for example, \cite{MR1442260}) 
between permutations of the finite set $[n] := \{1,2,\ldots,n\}$ 
and certain trees with $n$ vertices labeled  by $[n]$.  

Denote by $\{0,1\}^\star:=\bigsqcup_{k=0}^\infty\{0,1\}^k$ the set of finite tuples
or {\em words} drawn from the alphabet $\{0,1\}$ (with the empty word $\emptyset$
allowed) -- the symbol $\bigsqcup$ emphasizes that this is a disjoint union.  
Write an $\ell$-tuple $(v_1, \ldots, v_\ell) \in \{0,1\}^\star$ more simply as
$v_1 \ldots v_\ell$. Define a directed graph with vertex set
$\{0,1\}^\star$ by declaring that if
$u= u_1 \ldots u_k$ and $v=v_1 \ldots v_\ell$ are two words,
then $(u,v)$ is a directed edge (that is, $u \rightarrow v$)
if and only if $\ell=k+1$ and $u_i=v_i$
for $i=1,\ldots,k$.  Call this directed graph
the {\em complete rooted binary tree}.  Say that $u < v$ for
two words $u= u_1 \ldots u_k$ and $v=v_1 \ldots v_\ell$ if
$k < \ell$ and $u_1 \ldots u_k = v_1 \ldots v_k$;
that is, $u < v$ if there exist words 
$w_0, w_1, \ldots, w_{\ell-k}$ with
$u = w_0 \to w_1 \to \ldots \to w_{\ell-k} = v$.

A {\em finite rooted binary tree}
is a non-empty subset $\ttt$ of 
$\{0,1\}^\star$ with the property that if $v \in \ttt$
and $u \in \{0,1\}^\star$ is such that $u \rightarrow v$, then $u \in \ttt$.
The vertex $\emptyset$ (that is, the empty word)
belongs to any such tree $\ttt$ and
is the {\em root} of $\ttt$.  See Figure~\ref{fig:binary_tree}.

\begin{figure}[htbp]
	\centering
		\includegraphics[width=1.00\textwidth]{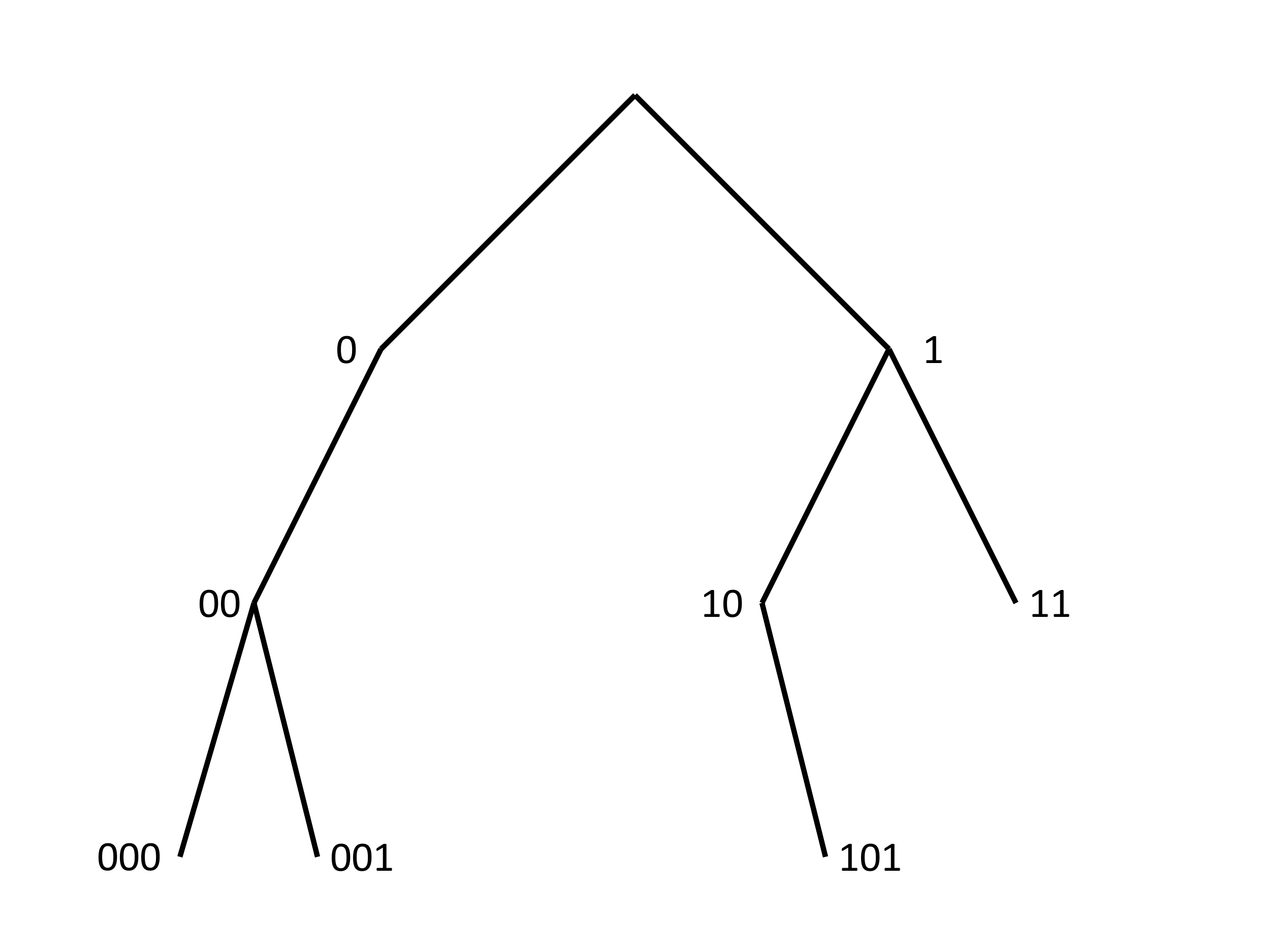}
	\caption{A finite rooted binary tree.} 
	\label{fig:binary_tree}
\end{figure}

If $\# \ttt = n$, then a {\em labeling} of $\ttt$ by $[n]$
is a bijective map $\phi : \ttt \to [n]$.

Suppose that $r(1), \ldots, r(n)$ is an ordered listing of $[n]$.
Define a permutation $\pi$ of $[n]$ by $\pi^{-1}(k) = r(k)$, $k \in [n]$.
%, see Table \ref{table1} for an example.
There is a unique pair  $(\ttt, \phi)$, where 
$\ttt$ is a finite rooted binary tree with
$\# \ttt = n$ and $\phi$
is a {\em labeling} of $\ttt$ by $[n]$, such that
\begin{itemize}
\item
$\phi(\emptyset) = 1$,
\item
if $u,v \in \ttt$ and $u < v$, then $\phi(u) < \phi(v)$,
\item
if $u,v \in \ttt$, $u0 \le v$, then
$\pi \circ \phi(u) > \pi \circ \phi(v)$.
\item
if $u,v \in \ttt$, $u1 \le v$, then
$\pi \circ \phi(u) < \pi \circ \phi(v)$.
\end{itemize}
The labeling may be constructed inductively as follows. If $n=1$, then
we just have the tree consisting of the root $\emptyset$ labeled with $1$.
For $n>1$ we first remove $n$ from the list $r(1), \ldots, r(n)$
and build the labeled tree $(\sss, \psi)$
for the resulting listing of $[n-1]$.  The labeled tree for
$r(1), \ldots, r(n)$ is of the form $(\ttt, \phi)$,
where $\ttt = \sss \cup \{u\}$
for $u \notin \sss$, 
$\phi(u) = n$,
$\phi$ restricted to $\sss$ is $\psi$, and, 
setting $u = u_1 \ldots u_k$, 
\[
u_\ell =
\begin{cases}
0, & \text{if $\pi \circ \psi(u_1 \ldots u_{\ell-1}) < \pi(n)$}, \\
1, & \text{if $\pi \circ \psi(u_1 \ldots u_{\ell-1}) > \pi(n)$}. \\
\end{cases}
\]

To illustrate this construction, take $n=9$ and consider the ordered listing
$r(1), \ldots, r(9)$ of the set $[9]$ to be $8, 7, 9, 4, 1, 3, 5, 2, 6$.
See Table~\ref{tab:permutation} for the resulting permutation, written in the usual
two line format.

\begin{table}[htbp]\label{table1}
	\centering
		\begin{tabular}{c|c|c|c|c|c|c|c|c|c}
		$k$ & 1 & 2 & 3 & 4 & 5 & 6 & 7 & 8 & 9 \\ \hline
$\pi(k)$& 5 & 8 & 6 & 4 & 7 & 9 & 2 & 1 & 3 \\
		\end{tabular}
		\medskip
	\caption{Permutation of $[9]$ with $8, 7, 9, 4, 1, 3, 5, 2, 6$
	as the corresponding ordered listing $r(1), \ldots, r(9)$.}
	\label{tab:permutation}
\end{table}
 
The successive ordered listings of $[1],
[2], \ldots, [9]$ implicit in the recursive construction are
\[
\begin{split}
& 1 \\
& 1, 2 \\
& 1, 3, 2 \\
& \cdots \\
& 8, 7, 4, 1, 3, 5, 2, 6 \\
& 8, 7, 9, 4, 1, 3, 5, 2, 6. \\
\end{split}
\]
As illustrated in Figure~\ref{fig:binary_sort_tree}, 
the label $1$ is inserted at the root, 
the label $2$ trickles down to the vertex $1$, 
the label $3$ trickles down to the vertex $10$, 
the label $4$ trickles down to the vertex $0$, and so on
until
the label $9$ trickles down to the vertex $001$.

\begin{figure}[htbp]
	\centering
		\includegraphics[width=1.00\textwidth]{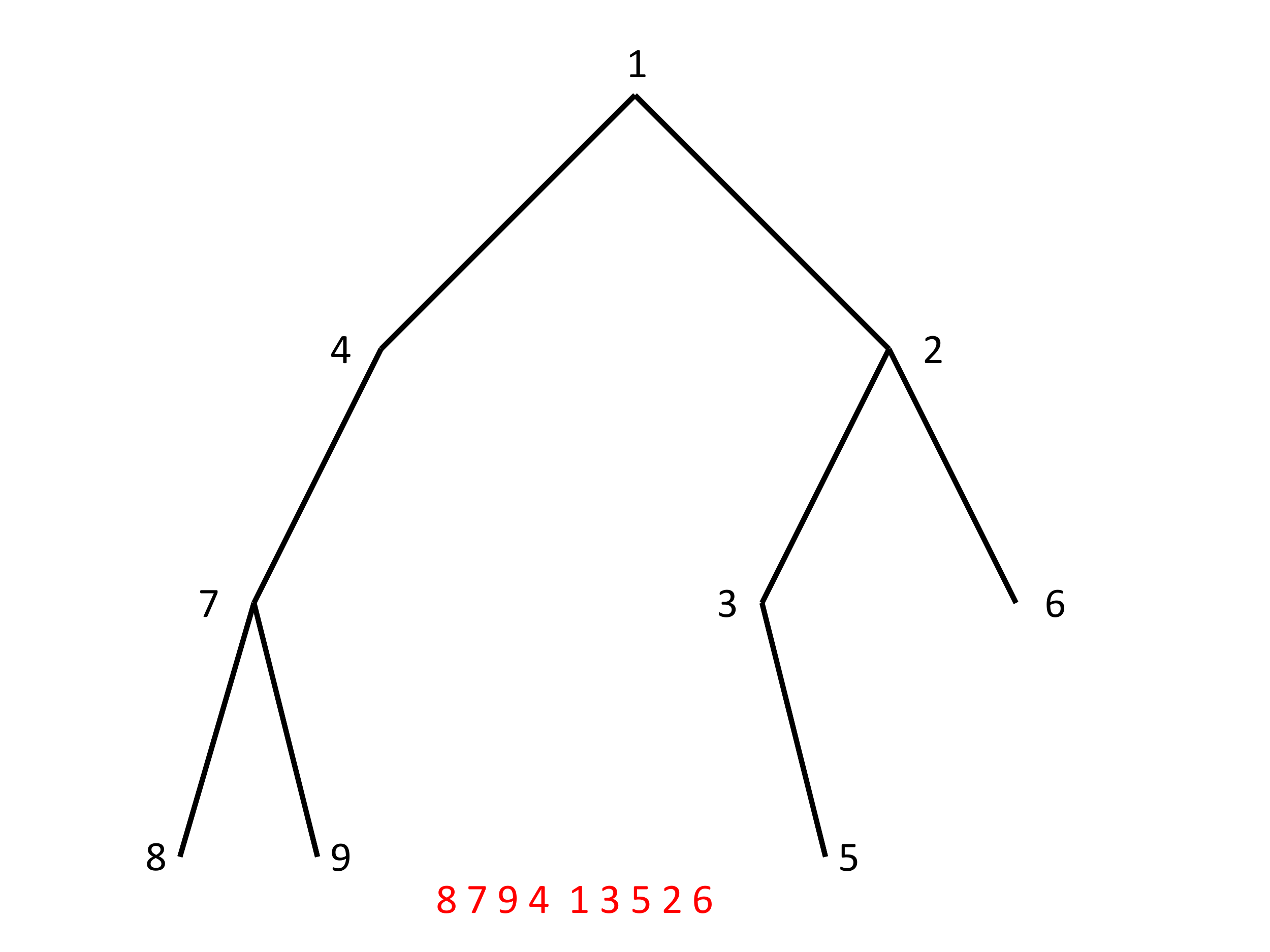}
	\caption{The labeled binary tree corresponding to the  permutation of $[9]$ with 
	$r(1), \ldots, r(9) = 8, 7, 9, 4, 1, 3, 5, 2, 6$.
For the sake of clarity, the 
coding (see Figure~\ref{fig:binary_tree})
of the vertices as elements of $\{0,1\}^\star$ is
%respective 
%$\{0,1\}^\star$-valued formal definitions
%of the vertices are 
not shown.
The correspondence between the labeling by the set $[9]$ and 
the vertices as elements of $\{0,1\}^\star$ is
$1 \leftrightarrow \emptyset$,
$2 \leftrightarrow 1$,
$3 \leftrightarrow 10$,
$4 \leftrightarrow 0$,
$5 \leftrightarrow 101$,
$6 \leftrightarrow 11$,
$7 \leftrightarrow 00$,
$8 \leftrightarrow 000$,
$9 \leftrightarrow 001$.
}
	\label{fig:binary_sort_tree}
\end{figure}

Now let $(U_n)_{n \in \bN}$ be a sequence of independent identically distributed
random variables that each have the uniform distribution on the interval $[0,1]$.
For each positive integer $n$ define a uniformly
distributed random permutation $\Pi_n$
of $[n]$ by requiring that $\Pi_n(i) < \Pi_n(j)$ if and only
if $U_i < U_j$ for $1 \le i,j \le n$.  That is,
$\Pi_n(k) = \# \{1 \le \ell \le n : U_\ell \le U_k\}$
and the corresponding ordered list $R_n(k) := \Pi_n^{-1}(k)$, $1 \le k \le n$,
is such that $U_{R_n(1)} < U_{R_n(2)} < \ldots < U_{R_n(n)}$.
The corresponding ordered list for $\Pi_{n+1}$ is thus obtained by
inserting $n+1$ into one of the $n-1$ ``slots''
between the successive elements of the existing list or into one of
the two ``slots'' at the beginning and end of the list, with
all $n+1$ possibilities being equally likely.

Applying the procedure above for building labeled rooted binary trees 
to the successive permutations $\Pi_1, \Pi_2, \ldots$ produces a
sequence of labeled trees $(L_n)_{n \in \bN}$, where
$L_n$ has $n$ vertices labeled by $[n]$.  
This sequence  is a Markov
chain that evolves as follows. Given $L_n$, there are $n+1$ words of the
form $v = v_1 \ldots v_\ell$ such that $v$ is not a vertex of the tree
$L_n$ but the word $v_1 \ldots v_{\ell-1}$ is.  Pick such a word uniformly
at random and adjoin it (with the label $n+1$ attached) to produce 
the labeled tree $L_{n+1}$.

If we remove the labels from each tree $L_n$, then
the resulting random sequence
 of unlabeled trees is also
a Markov chain that has the same distribution
as the sequence of trees generated by the BST algorithm when
the input stream consists of independent random variables that all 
have the same continuous distribution function. 
In essence, at step $n+1$ of the BST algorithm there are $n+1$ vertices 
that can be added to the existing tree and  the
rank of the input value  $x_{n+1}$ within $x_1, \ldots, x_n, x_{n+1}$ determines the choice of this
``external vertex'':
for i.i.d. continuously
distributed random input, this rank is uniformly distributed on 
$\{1, \ldots, n + 1\}$, resulting in a uniform pick from the
external vertices (see also the discussion following \eqref{E:BST_prob}).
See Figure~\ref{fig:binary_tree_external} for an example showing
the external vertices of the finite rooted binary tree of Figures~\ref{fig:binary_tree} 
and~\ref{fig:binary_sort_tree}.

\begin{figure}[htbp]
	\centering
		\includegraphics[width=1.00\textwidth]{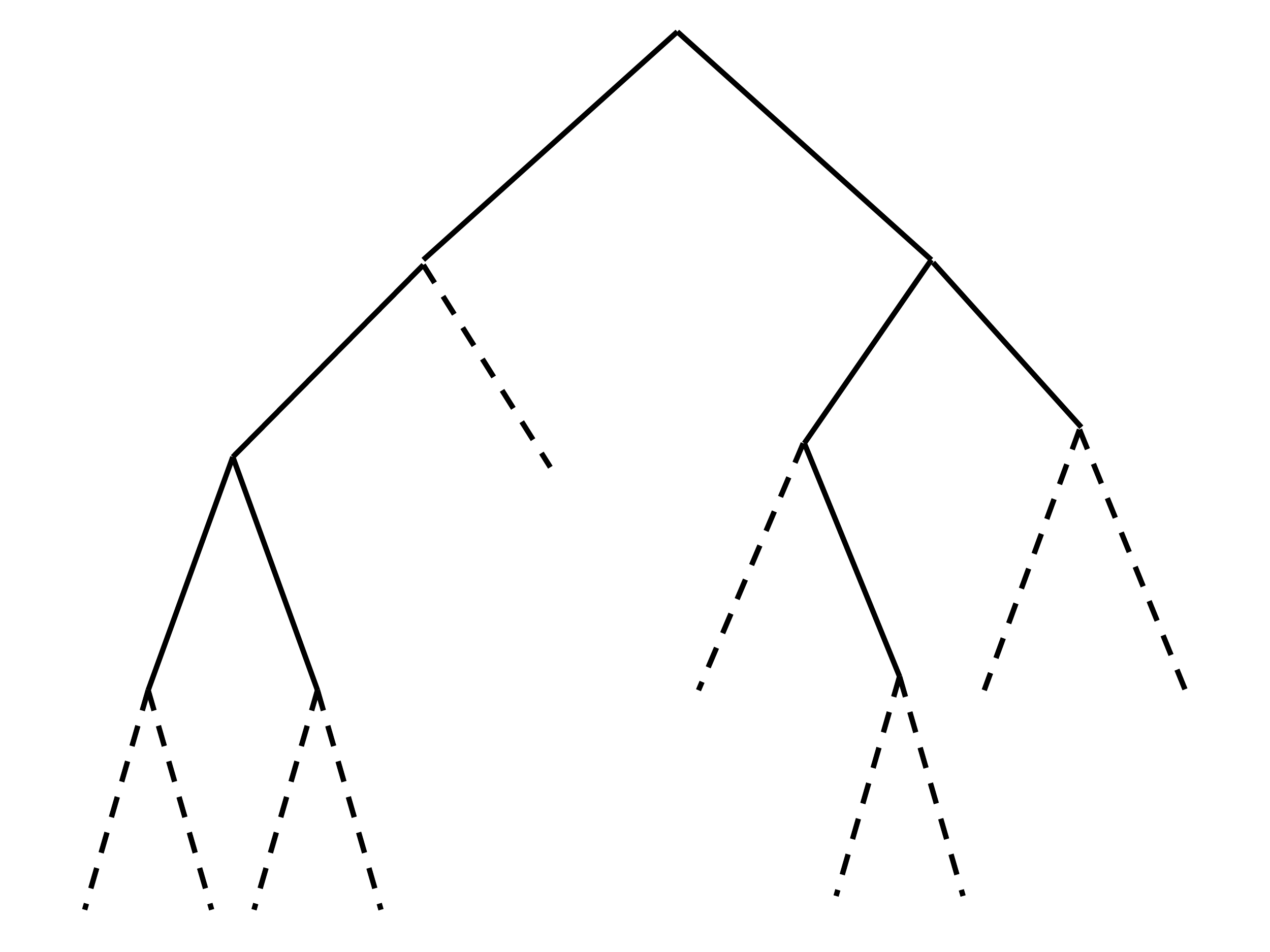}
	\caption{A finite rooted binary tree, the tree with $9$ vertices
	connected by the solid edges, and its $10$ external vertices, the vertices
	connected to the tree by dashed edges. For simplicity, the 
coding of the vertices as elements of $\{0,1\}^\star$ is
not shown.}
	\label{fig:binary_tree_external}
\end{figure}

{}From now on we will refer to any Markov chain on the space
of finite rooted binary trees with this  transition mechanism
as ``the'' BST process and denote it by $(T_n)_{n \in \bN}$.

We note in passing that the labeled permutation trees 
$L_1,\ldots,L_{n-1}$ can be reconstructed from $L_n$, but a similar
reconstruction of the history of the process from its current value 
is not possible if we consider the sequence of
labeled trees obtained by labeling
the vertices of the tree in the binary search tree
algorithm with the input values
$x_1,\ldots,x_n$ that created the tree.

Write $G_n$ (respectively, $D_n$)
for the number of vertices in $T_n$ of the
form $0v_2 \ldots v_\ell$ (resp. $1w_2 \ldots w_m$).
That is, $G_n$ and $D_n$ are the sizes of the 
``left'' and ``right'' subtrees in $T_n$ below the root $\emptyset$.
Then, $G_n + 1$ and $D_n + 1$ are, respectively, the number of
``slots'' to the left and to the right of $1$ in the
collection of $n+1$ slots between successive elements or
at either end of the ordered list $\Pi_n^{-1}(1), \ldots, \Pi_n^{-1}(n)$.
It follows that the sequence of pairs $(G_n + 1, D_n + 1)$, $n \in \bN$,
is itself a Markov chain that evolves as the numbers of black and white
balls in a classical P\'olya urn (that is, as the process describing the
successive compositions of an urn
that initially contains one black and one white ball
and at each stage a ball is drawn uniformly
at random and replaced along with a new ball of the same color).
More precisely, conditional on the past up to time $n$, if 
$(G_n + 1, D_n + 1) = (b,w)$,
then $(G_{n+1} + 1, D_{n+1} + 1)$ takes the values $(b+1,w)$ and $(b,w+1)$ with
respective conditional probabilities $\frac{b}{b+w}$ and $\frac{w}{b+w}$.

More generally,  suppose for a fixed vertex $u = u_1\ldots u_k \in \{0,1\}^*$ 
that we write $G_n^u$ (respectively, $D_n^u$) for
the number of vertices in $T_n$ of the form $u_1\ldots u_k 0 v_2\ldots v_\ell$ 
(resp. $u_1\ldots u_k 1 w_2\ldots w_m$).
That is, $G_n^u$ and $D_n^u$ are the sizes of the 
``left'' and ``right'' subtrees in $T_n$ below the vertex $u$.
Put $C_n^u := \#\{v \in T_n : u \le v\}$ and 
$S_r^u = \inf\{s \in \bN : C_s^u = r \}$ for $r \in \bN$; that is,
$S_r^u$ is the first time that the subtree of $T_n$ rooted at $u$ 
has $r$ vertices.
Then, the sequence $(G_{S_r^u}, D_{S_r^u})$, $r \in \bN$, 
obtained by time-changing the sequence 
$(G_n^u, D_n^u)$, $n \in \bN$, so that we only observe it when it
changes state is a Markov chain with the same distribution as 
$(G_n, D_n)$, $n \in \bN$.
 
It follows from this observation that
we may construct the tree-valued process $(T_n)_{n \in \bN}$ from an
infinite collection of independent, identically distributed P\'olya urns, with
one urn for each vertex of the complete binary tree $\{0,1\}^\star$, by
running the urn for each vertex according to a clock that 
depends on the evolution of the
urns associated with vertices that are on the path from the root to
the vertex.

More specifically, we first equip each vertex $u \in \{0,1\}^\star$ 
with an associated independent $\bN_0 \times \bN_0$-valued 
{\em routing instruction} 
process $(Y_n^u)_{n \in \bN_0}$ such that 
$(Y_n^u + (1,1))_{n \in \bN_0}$ evolves like
the pair of counts in a P\'olya urn with an initial composition of one black
and one white ball. Then,
at each point in time we feed in a new particle at the root $\emptyset$.
At time $0$ the particle simply comes to rest at $\emptyset$. At time $1$ the 
root is occupied and so the particle must be routed to either the vertex
$0$ or the vertex $1$, where it comes to rest,
depending on whether the value of $Y_1^\emptyset$ is $(1,0)$
or $(0,1)$.  We then continue on in this way: at time $n \ge 2$ we feed a particle in
at the root $\emptyset$, it is routed to the vertex $0$ or the vertex $1$ depending
on whether the value of $Y_n^\emptyset - Y_{n-1}^\emptyset$ is $(1,0)$
or $(0,1)$, the particle then trickles down through the tree until it
reaches an unoccupied vertex. 
At each stage of the trickle-down, if the particle is routed to a vertex
$u$ that is already occupied, then it moves on to the vertex $u0$ or the vertex $u1$ depending
on whether the value of $Y_{A_n^u}^u - Y_{A_n^u-1}^u$ is $(1,0)$
or $(0,1)$, where $A_n^u$ is the number of particles that have passed through 
vertex $u$ and been routed onwards by time $n$.  
The resulting sequence of trees is indexed by
$\bN_0$ rather than $\bN$, and if we shift the indices by one
we obtain a sequence indexed by $\bN$ that has the same distribution as 
$(T_n)_{n \in \bN}$.

It is well-known (see \cite{MR0176518}) that the Doob-Martin compactification of 
the state space $\bN^2$ of the classical P\'olya urn
results in a Doob-Martin boundary that is homeomorphic to the unit interval $[0,1]$:
a sequence of pairs $((b_n, w_n))_{n \in \bN}$ from $\bN^2$ converges to a point
in the boundary if and only if $b_n + w_n \to \infty$ and $\frac{w_n}{b_n+w_n} \to z$
for some $z \in [0,1]$.  We can, of course, identify $[0,1]$ with the space of probability
measures on a set with two points, say $\{0,1\}$, by identifying $z \in [0,1]$
with the probability measure that assigns mass $z$ to the point $1$.

It is a consequence of results we prove in Section~\ref{S:general_trickle}
that this result ``lifts'' to the binary search tree process:  the Doob-Martin boundary
is homeomorphic to the space of probability measures on
$\{0,1\}^\infty$ equipped with the weak topology corresponding to the
product topology on $\{0,1\}^\infty$ and a sequence $(\ttt_n)_{n \in \bN}$ of 
finite rooted binary trees converges to the boundary point identified with
the probability measure $\mu$ if and only if $\# \ttt_n \to \infty$ and
for each $u  \in \{0,1\}^\star$
\[
\frac{
\#\{v \in \ttt_n : u \le v\}
}
{
\# \ttt_n
}
\to
\mu\{v \in \{0,1\}^\infty : u \le v\},
\]
where we extend the partial order $\le$ on $\{0,1\}^\star$ to 
$\{0,1\}^\star \sqcup \{0,1\}^\infty$ by declaring that two distinct
elements of $\{0,1\}^\infty$ are not comparable and $u \in \{0,1\}^\star$
is dominated by $v \in \{0,1\}^\infty$ if $u$ is a prefix of $v$.

An outline of the remainder of the paper is the following.
In Section~\ref{S:trickle_general} we give a general version of the trickle-down construction
in which the complete rooted binary tree $\{0,1\}^*$ 
is expanded to a broad class of directed acyclic graphs 
with a unique ``root'' vertex and the independent P\'olya urns
at each vertex are replaced by independent Markov chains that keep a running total
of how many particles have been routed onwards to each of the immediate successors
of the vertex.  For example, we could take the graph to be $\bN_0^2$ with directed edges
of the form $((i,j),(i+1,j))$ and $((i,j),(i,j+1))$ (so that the root is $(0,0)$)
and take the Markov chain at vertex $(i,j)$ to correspond to successive 
particles being routed independently with
equal probability to either $((i,j),(i+1,j))$ or $((i,j),(i,j+1))$.  This gives a
process somewhat reminiscent of Sir Francis Galton's {\em quincunx} --
a device used to illustrate the binomial distribution and central limit theorem in which 
successive balls are dropped onto a vertical board with interleaved rows of horizontal pins that
send a ball striking them downwards to the left or right ``at random''.  We illustrate
the first few steps in the evolution of the set of occupied vertices in Figure~\ref{fig:quincunx_evolve}.

\begin{figure}[htbp]
	\centering
		\includegraphics[width=1.00\textwidth]{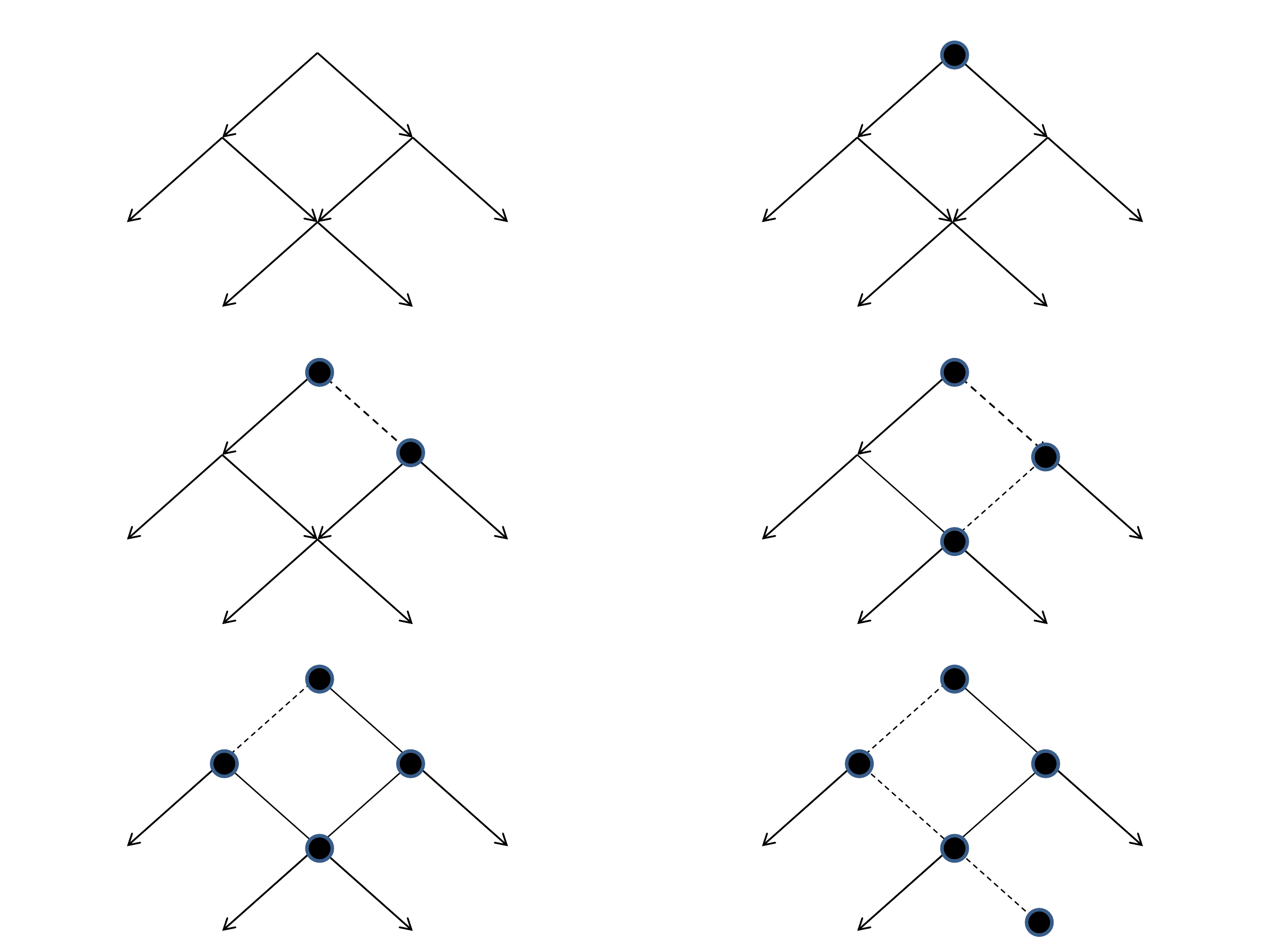}
	\caption{The first five steps in the trickle-down process for the directed acyclic
	graph $\bN_0^2$ with directed edges of the form $((i,j),(i+1,j))$ and $((i,j),(i,j+1))$.  
	The root $(0,0)$ is drawn at the top.  Dashed lines show that paths taken by successive
	particles as they pass through occupied vertices until they come to rest at the
	first unoccupied vertex they encounter.}
	\label{fig:quincunx_evolve}
\end{figure}

We give a brief overview of the theory of Doob-Martin compactifications in
Section~\ref{S:Martin_general}.  We present our main
result, a generalization of the facts 
about the Doob-Martin boundary of the
binary search tree process we have stated above, 
in Section~\ref{S:general_trickle}. 
It says for a large class of trickle-down processes 
that if the convergence of a sequence to a point in the 
Doob-Martin boundary  for each of the component 
Markov chains is determined
by the convergence of the proportions of points that are routed to each of
the immediate successors, then the Doob-Martin boundary of the trickle-down
process is homeomorphic to a space of probability measures on a set
of directed paths from the root that either have infinite length or
are ``killed'' at some finite time.
We then consider special cases of this general result
in Section~\ref{S:BST_and_DST}, where we investigate the binary and digital
search tree processes, and in Section~\ref{S:RRT_and_CRT}, 
where we study random
recursive tree processes that are related to a hierarchy of 
Chinese restaurant processes.

More specifically, we show in Section~\ref{S:BST_and_DST}
that, as we already noted above, the Doob-Martin boundary of the
BST process may be identified with the space of probability measures on
$\{0,1\}^\infty$ equipped with the weak topology corresponding to the
product topology on $\{0,1\}^\infty$, that every boundary point is extremal,
that the {\em digital search tree} process 
is a Doob $h$-transform of the BST process with respect to the extremal harmonic function
corresponding to the fair coin-tossing measure on $\{0,1\}^\infty$, and that
an arbitrary Doob $h$-transform may be constructed from a suitable
``trickle-up'' procedure in which particles come in successively
from the ``leaves at infinity''
of the complete rooted binary tree $\{0,1\}^*$ (that is from $\{0,1\}^\infty$)
and work their way up the tree until they can move no further because their path
is blocked by an earlier particle.

We observe  in Section~\ref{S:RRT_and_CRT} that the 
{\em random recursive tree (RRT) process} 
-- see \cite{smythe-mahmoud} for a review --
can be built from the above sequence $(\Pi_n)_{n \in \bN}$ 
of uniform permutations
in a manner analogous to the construction of the BST process by
using a different bijection between permutations and trees.   
The RRT process is also a trickle-down
process similar to the BST process, with the tree $\{0,1\}^*$
replaced by the tree $\bN^*$ and the P\'olya urn routing instructions replaced
by the Markov chain that gives the block sizes in the simplest {\em Chinese
restaurant process} model of growing random partitions.  We extend this
construction to incorporate Pitman's two-parameter family of
Chinese restaurant processes and then investigate the 
associated Doob-Martin compactification.  We identify the Doob-Martin boundary
as a suitable space of probability measures, show that all boundary points
are extremal, demonstrate that $h$-transform processes may be constructed
via a ``trickle-up'' procedure similar to that described above for the
BST process, and relate the limit distribution to the 
{\em Griffiths--Engen--McCloskey (GEM) distributions}.  Similar nested
hierarchies of Chinese restaurant processes appear in
\cite{MR2288702, MR2561439} and in \cite{MR2279480, MR2606082} in the statistical
context of mixture models, hierarchical models, and nonparametric Bayesian inference.

%Sections \ref{S:Mallows}-\ref{S:Catalan_trees} treat situations that are not covered by the main result in Section~\ref{S:general_trickle}.

A commonly used probability distribution on the set of permutations of a finite
set is the Mallows $\phi$ model --
see \cite{MR0087267, MR818986, MR876847, MR964069, MR1128236, MR1346107} --
for which the uniform distribution is a limiting case.  This distribution extends
naturally to the set of permutations of $\bN$, and applying the obvious generalization
of the above bijection between finite permutations and labeled finite rooted subtrees of the
complete rooted binary tree $\{0,1\}^\star$ leads to an interesting probability
distribution on infinite rooted subtrees of $\{0,1\}^\star$.  In Section~\ref{S:Mallows}
we relate this distribution to yet another model for growing random finite trees that we
call the {\em Mallows tree process}.  We show that the Doob-Martin boundary
of this Markov chain is a suitable space of infinite rooted subtrees of $\{0,1\}^\star$.
We outline a parallel analysis in Section~\ref{S:q_chain} for a somewhat similar process
that is related to Sch\"utzenberger's non-commutative $q$-binomial theorem and its
connection to weighted enumerations of ``north-east'' lattice paths.

The routing instruction processes that appear in the trickle-down construction of the Mallows
tree process have the feature that if we know the state of the chain at some time, then we
know the whole path of the process up to that time.  We observe in Section~\ref{sec:memory}
that such processes may be thought of as Markov chains on a rooted tree with transitions that
always go to states that are one step further from the root.  As one might expect, the
Doob-Martin compactification in this case is homeomorphic to the usual end compactification
of the tree. We use this observation to describe the Doob-Martin compactification of a certain Markov
chain that takes values in the set of compositions of the integers and whose value at time
$n$ is uniformly distributed over the compositions of $n$.

As we have already remarked, 
our principal reason for studying the Doob-Martin compactification 
of a trickle-down chain is to determine the
chain's tail $\sigma$-field.  The Doob-Martin compactification gives
even more information about the asymptotic behavior of the chain, but
it is not always easy to compute.  We describe another approach to
determining the tail $\sigma$-field of certain trickle-down
chains in Section~\ref{S:freezing}.  That result applies to the
Mallows tree process and the model related 
to the non-commutative $q$-binomial theorem.  We also apply it
in Section~\ref{S:Catalan_trees} to yet another Markov chain model
of growing random trees from \cite{MR2060629}.   The latter model, which
turns out to be of the trickle-down type, has
as its state space the set of finite rooted binary trees and is
such that if it is started at time $0$ in the trivial tree $\{\emptyset\}$, then 
the value of the process at time $n$ is equally likely to be any of the
$C_n$ rooted binary trees with $n$ vertices, where $C_n:=\frac{1}{n+1}\binom{2n}{n}$
is the {\em $n^{\mathrm{th}}$ Catalan number}.  Even though we cannot
determine the Doob-Martin compactification of this chain, we are able to show
that its tail $\sigma$-field is generated by 
the random infinite rooted subtree of the complete binary
tree that is the (increasing) union of the
successive values of the chain. Also, knowing the tail $\sigma$-field allows us to
identify the Poisson boundary -- see Section~\ref{S:Martin_general} for a definition
of this object.

We observe that there is some similarity between the trickle-down
description of the binary search tree process and the
{\em internal diffusion limited aggregation model} that
was first named as such in \cite{MR1188055} after it was introduced in
\cite{MR1218674}.  There particles are  fed successively into a fixed
state of some Markov chain and they then execute independent copies of the
chain until they  come to rest at the first unoccupied state they encounter.
The digital search tree  process that we discuss in Section~\ref{S:BST_and_DST}
turns out to be internal diffusion limited aggregation model for the Markov chain
on the complete rooted binary tree that from the state $u$ moves to the states $u0$
and $u1$ with equal probability.

Finally, we note that there are a number of other papers that
investigate the Doob-Martin boundary of Markov chains
on various combinatorial structures such as Young diagrams and partitions --
see, for example, \cite{MR1331221, MR1609628, MR1747061, MR2160320, MR2211157, MR2274860}.

\section{The trickle-down construction}
\label{S:trickle_general}

\subsection{Routing instructions and clocks}
\label{SS:routing}

We begin by introducing a class of directed graphs with features
generalizing those of the complete binary tree $\{0,1\}^\star$
considered in the Introduction.

Let $\II$ be a countable directed acyclic graph.
With a slight abuse of notation, write $u \in \II$
to indicate that $u$ is a vertex of $\II$.
Given two vertices $u,v \in \II$, write $u \rightarrow v$
if $(u,v)$ is a directed edge in $\II$.

Suppose that there is a unique vertex $\hat 0$ such that
for any other vertex $u$ there is at least one finite {\em directed path}
$\hat 0 = v_0 \rightarrow v_1 \rightarrow \ldots \rightarrow v_n = u$
from $\hat 0$ to $u$.
Define a {\em partial order} on $\II$ by declaring that $u \le v$
if $u=v$ or there is a finite directed path 
$u = w_0 \rightarrow w_1 \rightarrow \ldots \rightarrow w_n = v$.
Note that $\hat 0$ is the unique minimal element of $\II$.
Suppose further that the number of directed paths between any
two vertices is finite: this is equivalent to supposing
that the number of directed paths between $\hat 0$ and
any vertex is finite.

For each vertex $u \in \II$,
set
\[
\alpha(u) := \{v \in \II : v \rightarrow u\}
\]
and
\[
\beta(u) := \{v \in \II : u \rightarrow v\}.
\]
That is, $\alpha(u)$ and $\beta(u)$ are, respectively,
the immediate predecessors and the immediate successors of $u$.
Suppose that $\beta(u)$ is non-empty for all $u \in \II$.
Thus, any path $\hat 0 = v_0 \rightarrow v_1 \rightarrow \ldots \rightarrow v_n = u$ is the initial piece of a semi-infinite path 
$v_0 \rightarrow v_1 \rightarrow \ldots \rightarrow v_n 
\rightarrow v_{n+1} \rightarrow \ldots$

We next introduce the notion of {\em routing instructions} that
underlies the construction of a sequence of connected
subsets of $\II$ via a trickle-down mechanism analogous to
that described in the Introduction for the BST: 
at each point in time a particle
is fed into $\hat 0$ and trickles down through $\II$ according
to the routing instructions at the occupied vertices it encounters
until it finds a vacant vertex to occupy.

Let $(\bN_0)^{\beta(u)}$ be the space of functions on the set of successors
of $u\in\II$ that take values in the non-negative integers. Let $e_v$, 
$v\in \beta(u)$, be the function that takes the value $1$ at $v$ and $0$
elsewhere.  That is, if we regard $e_v$ as a vector indexed by $\beta(u)$,
then $e_v$ has $1$ in the $v^{\mathrm{th}}$ coordinate and $0$ elsewhere.
Formally, a routing instruction for the vertex $u \in \mathbf I$ is a
sequence $(\sigma_n^u)_{n \in \mathbb N_0}$ of elements of $(\mathbb N_0)^{\beta(u)}$
with the properties:
\begin{itemize}
\item
$\sigma_0^u = (0,0,\ldots)$, 
\item
for each $n \ge 1$,
$\sigma_{n}^u = \sigma_{n-1}^u + e_{v_n}$ for some $v_n \in \beta(u)$.
\end{itemize}
The interpretation of such a sequence is that, for each $v \in \beta(u)$, the component $(\sigma_n^u)^v$ counts the number
of particles out of the first $n$ to pass through the vertex $u$ that 
are routed onwards to vertex $v \in \beta(u)$.  The equation 
$\sigma_{n}^u = \sigma_{n-1}^u + e_{v_n}$ indicates that the $n^{\mathrm{th}}$ 
such particle is routed onwards to the vertex $v_n \in \beta(u)$.

For $s = (s^v)_{v \in \beta(u)}\in (\mathbb N_0)^{\beta(u)}$ we put
\begin{equation}
\label{E:def_norm}
|s| := \sum_{v \in \beta(u)} s^v.
\end{equation}
Note that  a routing instruction 
$(\sigma_n^u)_{n \in \mathbb N_0}$ for the vertex
$u$ satisfies $|\sigma_{n}^u| = n$ for all $n \in \mathbb N_0$.

For each vertex $u \in \mathbf I$, suppose that we have  a non-empty set $\Sigma^u$ 
of routing instructions for $u$. Put $\Sigma := \prod_{u \in \mathbf I} \Sigma^u$.  Depending
on convenience, we write a generic element
of $\Sigma$ in the form $((\sigma_n^u)_{n \in \mathbb N_0})_{u \in \mathbf I}$ or 
the form $((\sigma^u(n))_{n \in \mathbb N_0})_{u \in \mathbf I}$.
Recall that $\sigma_n^u = \sigma^u(n)$ is an element of $(\mathbb N_0)^{\beta(u)}$,
and so it has coordinates $(\sigma_n^u)^w = (\sigma^u(n))^w$ for $w \in \beta(u)$.

Given $\sigma \in \Sigma$, 
each vertex $u$ of $\mathbf I$ has an associated {\em clock} 
$(a_n^u(\sigma))_{n \in \mathbb N_0}$ such that
$a_n^u(\sigma)$ counts the number of particles
that have passed through $u$ by time $n$
and been routed onwards to some vertex in $\beta(u)$. 
For each $n\in\bN$ and $\sigma\in\Sigma$ the integers $a_n^u(\sigma)$, 
$u\in{\mathbf I}$, are defined recursively (with respect 
to the partial order on $\II$) as follows: 
%The family $a_n: \Sigma \rightarrow (\mathbb N_0)^\mathbf I$
%is defined inductively as follows:
\begin{itemize}
\item[(a)]
$a_n^{\hat 0}(\sigma) := n$,
%for all $\sigma \in \Sigma$ and $n \in \mathbb N_0$,
\item[(b)]
$a_n^u(\sigma) := (\sum_{v \in \alpha(u)} (\sigma^v(a_n^v(\sigma)))^u - 1)_+$, $u \ne \hat 0$.
\end{itemize}
In particular,
$a_0(\sigma) = (0,0,\ldots) \mbox{ for all } \sigma \in \Sigma$.
The equation in (b) simply says that the number of particles that have been
routed onwards from the vertex $u$ by time $n$ 
is equal to the number of particles that have passed through
vertices $v$ with $v \rightarrow u$ and have been 
routed in the direction of $u$, excluding the
first particle that reached the vertex $u$ and occupied it.

We say that the sequence $(x_n)_{n \in \mathbb N_0} = ((x_n^u)_{u \in \mathbf I})_{n \in \mathbb N_0}$ 
given by
\begin{equation}
\label{E:def_trickle_down}
x_n^u := \sigma^u(a_n^u(\sigma))
\end{equation}
is the 
{\em result of the trickle-down construction for the routing instruction} 
$\sigma \in \Sigma$.

\begin{example}
\label{E:quincunx}
Suppose that the directed graph $\II$ has $\bN_0^2$ as its set of vertices and directed edges
of the form $((i,j),(i+1,j))$ and $((i,j),(i,j+1))$. The root is $(0,0)$.
\begin{itemize}
\item[(a)]
Figure~\ref{fig:quincunx} shows the state at time $n=12$
(that is, the values of $x_{12}^u$ for $u = (i,j) \in \II = \bN_0^2$)
generated by routing instructions whose initial pieces are 
\[
\sigma_1^{(0,0)}=(0,1),
\ \sigma_2^{(0,0)}=(0,2), 
\ \sigma_3^{(0,0)}=(1,2), 
\ \sigma_4^{(0,0)}=(2,2),
\]
\[
\sigma_1^{(0,1)}=(0,1), 
\ \sigma_1^{(1,0)}=(1,0), 
\ \sigma_2^{(1,0)}=(2,0), 
\ \sigma_1^{(1,1)}=(0,1),
\]
when the states $(i+1,j)$ and $(i,j+1)$
that comprise $\beta(u)$, the immediate successors of $u$,  
are taken in that order.
\item[(b)]
The clock $a^{(0,1)}$, which translates from ``real time'' to the ``local time'' at the vertex $(0,1) \in \II = \bN_0^2$ by counting the particles that pass through this vertex, has a corresponding sequence of states that begins
$a^{(0,1)}_0 = a^{(0,1)}_1=a^{(0,1)}_2=a^{(0,1)}_3 = 0$, $a^{(0,1)}_4 = a^{(0,1)}_5 =1$.
\item[(c)]
The configuration $x_5$ consists of a pair $x_5^u = x_5^{(i,j)} \in \bN_0^2$
for every $u = (i,j) \in \II = \bN_0^2$.  Each such pair records the
onward routings by time $5$ to the immediate successors
$\beta(u) = \{(i+1,j), (i,j+1)\}$ of $u$.  Following
through the construction gives $x^{(0,0)}_5=(2,2)$, 
$x^{(1,0)}_5 = (0,1)$, $x^{(0,1)}_5 = (2,0)$, 
$x^{(1,1)}_5 = (0,1)$, with all the other components of 
$x_5$ being $(0,0)$. For example, the value $x^{(0,1)}_5 = (2,0)$
indicates that by time $5$ the 
vertex $(0,1)$ has been occupied, $2$ particles have been
sent onwards to the vertex $(1,1)$, and $0$ particles
have been sent onwards to the other immediate successor $(0,2)$.
\item[(d)]
Looking at the state $x_{12}^u$, $u \in \II$, at time $n=12$ we cannot
reconstruct the relevant initial segments of the routing instructions 
but we can see, for example, that
\begin{itemize}
\item
$13$ particles have been fed into the root $(0,0)$: the first of these stayed at the
root, $6$ of the remainder were routed onwards to $(1,0)$ and the other $6$ 
were routed onwards to $(0,1)$ (that is, $a_{12}^{(0,0)}(\sigma) = 12$ and
$\sigma_{12}^{(0,0)} = (6,6)$);
\item 
of the $6$ particles routed from the root towards $(1,0)$, the first stayed there, $2$ of the
remainder were routed onwards to $(2,0)$ and the other $3$ were routed onwards to $(1,1)$
(that is, $a_{12}^{(1,0)}(\sigma) = 5$ and $\sigma_{5}^{(1,0)} = (2,3)$);
\item
of the $6$ particles routed from the root towards $(0,1)$, the first stayed there, $3$ of the
remainder were routed onwards to $(1,1)$ and the other $2$ were routed onwards to $(0,2)$
(that is, $a_{12}^{(0,1)}(\sigma) = 5$ and $\sigma_{5}^{(0,1)} = (3,2)$).
\end{itemize}
\end{itemize}
\end{example}

\begin{figure}[htbp]
	\centering
		\includegraphics[width=1.00\textwidth]{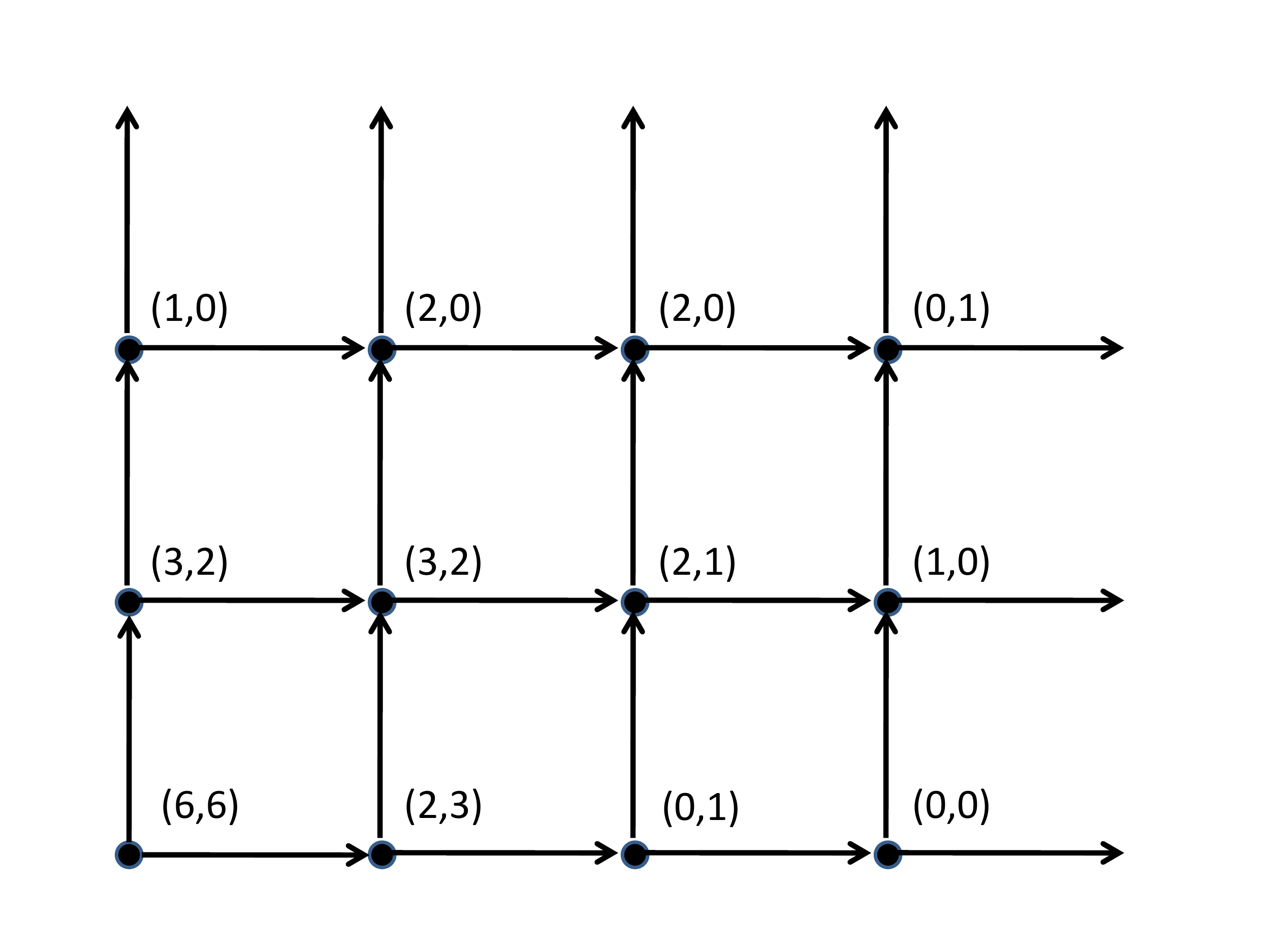}
	\caption{A possible result of the trickle-down construction at time $n=12$ on $\II = \bN_0^2$.  See the text for details.}
	\label{fig:quincunx}
\end{figure}

For each vertex $u \in \mathbf I$, write
$\mathbf S^u \subseteq (\mathbb N_0)^{\beta(u)}$ for the set of vectors that
can appear as an entry in an element of $\Sigma^u$.  
That is, $s \in \mathbf S^u$ if
and only if $s = \sigma_m$ for some sequence $(\sigma_n)_{n \in \mathbb N_0} \in \Sigma^u$,
where, of course, $m = |s|$.  
Note that the set $\mathbf S^u$ is countable.

Let $\SSS$ denote the subset of $\prod_{u \in \II} \SSS^u$ 
consisting of points $x = (x^u)_{u \in \II}$ that can be constructed as 
$(x^u)_{u\in \II} = (\sigma^u(a_m^u(\sigma)))_{u\in \II}$ for some $m \in \bN_0$ and
some $\sigma = ((\sigma_n^v)_{n \in \bN_0})_{v \in \II} \in \Sigma$\,;
that is, $x$ appears as the value at time $m$ in the
result of the trickle-down construction for the routing instruction $\sigma$.
Clearly, if a sequence 
$(x^u)_{u \in \II} \in \prod_{u \in \II} \SSS^u$ belongs to $\SSS$, then
\begin{equation}
\label{E:consistency_condition}
\left(\sum_{v \in \alpha(u)} (x^v)^u - 1\right)_+
=
\sum_{w \in \beta(u)} (x^u)^w.
\end{equation} 

Given two points $x,y \in \SSS$, say that 
$x \preceq y$ if for some $m,n \in \bN_0$ with $m \le n$ and
some $\sigma \in \Sigma$ we have
$x^u = \sigma^u(a_m^u(\sigma))$ and 
$y^u = \sigma^u(a_n^u(\sigma))$
for all $u \in \II$.

\begin{remark}
\label{R:hit_condition}
Note that if $x \preceq y$, then
$(x^u)^v \le (y^u)^v$ for all $u \in \II$ and $v \in \beta(u)$.
Moreover, if $x \preceq y$,  then
\[
\begin{split}
&  \left \{\sigma \in \Sigma : 
\left (\sigma^u(a_m^u(\sigma))\right)_{u \in \II} = x \; \text{and} \; \left(\sigma^u(a_n^u(\sigma))\right)_{u \in \II} = y 
\; \text{for some $m \le n \in \bN_0$}
\right \} \\
& \quad =
 \left \{\sigma \in \Sigma : 
\left(\sigma^u\left(\sum_{v \in \beta(u)} (x^u)^v\right)\right)_{u \in \II} = x \; \text{and} \; \left(\sigma^u\left(\sum_{v \in \beta(u)} (y^u)^v\right)\right)_{u \in \II} = y 
\right \} \\
& \quad =
\prod_{u \in \II} 
 \left \{\sigma^u \in \Sigma^u : 
\sigma^u\left(\sum_{v \in \beta(u)} (x^u)^v\right) = x^u \; \text{and} \; \sigma^u\left(\sum_{v \in \beta(u)} (y^u)^v\right) = y^u
\right \} \\
& \quad =
\prod_{u \in \II} 
 \left \{\sigma^u \in \Sigma^u : 
\sigma^u(p) = x^u \; \text{and} \; \sigma^u(q) = y^u
\; \text{for some $p \le q \in \bN_0$}
\right \}. \\
\end{split}
\]
\end{remark}

\begin{example}
\label{E:dag_tree}
Suppose that $\II$ is a {\em tree}.  This amounts to imposing the extra condition
that for each vertex $u \in \II$ there is a {\bf unique} directed path from
$\hat 0$ to $u$. For each $u \in \II$ 
take $\Sigma^u$ to be the set of all allowable routing 
instructions for $u$, so that the corresponding set 
$\SSS^u$ is $(\bN_0)^{\beta(u)}$.
In this case, there is a bijection between $\SSS$ and
finite subtrees of $\II$ that contain the root $\hat 0$.  An element
$x \in \SSS$ determines a finite rooted subtree $\ttt$ by
\[
\ttt = 
\{\hat 0\} 
\cup 
\{v \in \II \setminus \{\hat 0\}: (x^u)^v > 0 \; \text{for some $u \in \alpha(v)$}\}.
\]
In other words, the tree $\ttt$ consists of those vertices of $\II$  that are occupied by the first $\sum_{v\in \beta(\hat 0)}(x^{\hat 0})^v$ particles.

Conversely, if $\ttt$ is a finite subtree of $\II$ that contains $\hat 0$, then
the corresponding element of $\SSS$ is
\[
x = \left( \left (\# \{w \in \ttt : v \le w\} \right)_{v \in \beta(u)} \right)_{u \in \II};
\]
that is, $x$ appears as the result of the trickle down
construction at some time $n$ and for each pair of vertices
$u \in \II$ and $v \in \beta(u)$ the integer
$\# \{w \in \ttt : v \le w\}$ gives the number of particles that have been routed onwards
from vertex $u \in \II$ to vertex $v \in \beta(u)$ by time $n$.
The partial order $\preceq$ on $\SSS$ is equivalent to containment of the
associated subtrees.  From now on, when $\II$ 
is a tree we sometimes do not mention
this bijection explicitly and abuse terminology slightly by speaking of $\SSS$ as
the set of finite subtrees of $\II$ that contain the root $\hat 0$.
\end{example}

\begin{example}
\label{E:trail}
In Example~\ref{E:dag_tree}, the set $\SSS^u$ of states
for the routing instructions at any vertex $u \in \II$ is all of $(\bN_0)^{\beta(u)}$. At the other
extreme we have what we call the \emph{single trail} routing: as
always, the first item is put into the root, but now, in the step from $n$
to $n+1$, the new item follows the trail $u_0,\ldots,u_{n-1}$ left by the
last one and then chooses $u_n$ from $\beta(u_{n-1})$. In this case, 
$\SSS^u = \{0\} \sqcup \bigsqcup_{v \in \beta(u)} \bN e_v$,
where $0$ is the zero vector in $(\bN_0)^{\beta(u)}$. 
Examples of this type appear in Section~\ref{sec:memory}.
\end{example}

\begin{remark}
\label{R:dag_tree}
In the setting of Example~\ref{E:dag_tree},
the sequence $(x_n)_{n \in \bN_0}$
in $\SSS$ constructed by setting $x_n^u = \sigma^u(a_n^u(\sigma))$ 
for some $\sigma \in \Sigma$ corresponds to a sequence of
growing subtrees that begins with the trivial tree $\{\hat 0\}$ 
and successively add a single vertex that is connected by a directed edge 
to a vertex present in the current subtree, 
and this correspondence is bijective. 
In Example~\ref{E:trail}, a sequence $(x_n)_{n \in \bN_0}$ in $\SSS$
corresponds to the sequence of initial segments of 
some infinite directed path,
$\hat 0 =u_0\to u_1\to u_2\to\cdots$ through $\II$, 
and this correspondence is also bijective.
\end{remark}

\subsection{Trickle-down chains}
\label{SS:trickle_down_chains}

We now choose the routing instructions randomly in order to
produce an $\SSS$-valued stochastic process.

For each $u \in  \mathbf I$, let $Q^u$ be a transition matrix whose rows and columns are indexed by some subset $\mathbf{R}^u \subseteq (\bN_0)^{\beta(u)}$ such that $(0,0,\ldots) \in \mathbf{R}^u$,  
and $Q^u(s',s'')>0$ for $s',s'' \in \mathbf{R}^u$ implies that
$s'' = s'+e_v$ for some $v \in \beta(u)$.  Let  $\Sigma^u$ be the set of sequences $\sigma^u = (\sigma_n^u)_{n \in \bN_0}$ in  $\mathbf{R}^u$ that satisfy $\sigma_0^u = (0,0,\ldots)$ and
$ Q^u(\sigma_n^u, \sigma_{n+1}^u)>0$ for all $n \in \bN_0$. Then $\Sigma^u$ is a set of routing instructions for the vertex $u$. Define, as in the
previous subsection,
$\mathbf{S}^u$  to be the set of elements of
 $\bN_0^{\beta(u)}$ that can appear as an entry in an element of $\Sigma^u$.
Note that $\mathbf{S}^u \subseteq \mathbf{R}^u $: the set
$\mathbf{S}^u$ consists of the states 
that are reachable by a Markov chain with transition matrix
$Q^u$ started from the state $(0,0,\ldots)$.  We will suppose from now on
that $\mathbf{R}^u = \mathbf{S}^u$.

%Let $\SSS^u$ and $\Sigma^u$ be as above for every $u \in \II$.
%Assume for each $u \in \II$ that there is a transition
%matrix $Q^u$ with rows and columns indexed
%by $\SSS^u$ such that $\sigma = (\sigma_n)_{n \in \bN_0} \in \Sigma^u$ if and only if
%$Q^u(\sigma_n, \sigma_{n+1})>0$ for all $n \in \bN_0$.

Write $(Y_n^u)_{n \in \bN_0}$
for the corresponding $\SSS^u$-valued Markov chain
with its associated collection of probability measures $\bQ^{u,\xi}$, $\xi \in \SSS^u$.  A realization of the process $Y^u$ starting
from the zero vector in $(\bN_0)^{\beta(u)}$ will serve as the routing
instruction  for the vertex $u$; that is, the $n^{\mathrm{th}}$
particle that trickles down to $u$ and finds $u$ occupied will be
routed onward to the immediate successor $v \in \beta(u)$
specified by  $e_v = Y_n^u - Y_{n-1}^u$.
By assumption, and with $0$ the zero vector in $(\bN_0)^{\beta(u)}$, $Y^u$  has positive 
probability under $\bQ^{u,0}$
of hitting any given state in $\SSS^u$. We will refer to $Y^u$ as the {\em routing chain for the vertex} $u$. Let $Y:=(Y^u)_{u\in\II}$, where the component
processes $Y^u$ are independent and have distribution $\bQ^{u,0}$.

With $a_0,a_1,\ldots$ the clocks defined in Section~\ref{SS:routing}, set
\[
A_n
:=
\begin{cases}
a_n(Y), & \text{if $Y_0 = (0,0,\ldots)$},\\
0, & \text{otherwise}.
\end{cases}
\]
Thus,
$(A_n)_{n \in \bN_0}$
is an $(\bN_0)^\II$-valued 
stochastic process with non-decreasing paths and initial
value $(0,0,\ldots)$.  When $Y_0 = (0,0,\ldots)$,
the value of the process $A_n$ at time $n$ is
a vector $(A_n^u)_{u \in \II}$: 
the non-negative integer $A_n^u$ records
the number of particles that have trickled down
to the vertex $u$ by time $n$, found $u$ already occupied,
and have been routed onwards.

Define
\[
Z_n^u := Y_{A_n^u}^u, \quad u \in \II, \, n \in \bN_0.
\]
By construction, $Z := (Z_n)_{n \in \bN_0} = ((Z_n^u)_{u \in \II})_{n \in \bN_0}$ 
is a Markov chain on the countable state space  $\SSS$
under the probability measure $\bigotimes_{u \in \II} \bQ^{u,0}$.
The paths of $Z$ 
start from the state $(0,0,\ldots)$ and
increase strictly in the natural partial order on $\SSS$.  
The random vector $Z_n^u$ gives for each immediate successor 
$v \in \beta(u)$ of $u$ the number of particles that have trickled down to 
$u$ by time $n$,
found $u$ already occupied, and have been routed onwards towards $v$.

By standard arguments, we can construct a measurable  space $(\Omega, \cF)$,
a family of probability measures $(\bP^x)_{x \in \SSS}$
and an $\SSS$-valued stochastic process $X = (X_n)_{n \in \bN_0}$
such that $X$ under $\bP^x$ is a Markov chain with $X_0 = x$
and the same transition mechanism as $Z$.

\begin{remark}
\label{R:hereditary}
Note that if
$\JJ$ is a subset of $\II$ with the property that 
$\{v \in \II : v \le u\} \subseteq \JJ$ for all $u \in \JJ$, 
then $((X_n^u)_{u \in \JJ})_{n \in \bN_0}$ is
a Markov chain under $\bP^x$.  
Moreover, the law of the latter process under $\bP^x$
agrees with its law under $\bP^y$ for any $y \in \SSS$ with
$x^u = y^u$ for all $u \in \JJ$.  
\end{remark}

\section{Doob-Martin compactification background}
\label{S:Martin_general}

We restrict the following sketch of Doob-Martin 
compactification theory for discrete time Markov chains
to the situation of interest in the present paper. The
primary reference is \cite{MR0107098}, but useful reviews
may be found in 
\cite[Chapter 10]{MR0407981},
\cite[Chapter 7]{MR0415773}, 
\cite{MR1463727},
\cite[Chapter IV]{MR1743100},
\cite[Chapter III]{MR1796539}.

Suppose that  $(X_n)_{n \in \bN_0}$ is a discrete time Markov chain
with countable state space $E$ and transition matrix $P$.  Define
the {\em Green kernel} or {\em potential kernel} $G$ of
$P$ by $G(i,j) := \sum_{n=0}^\infty P^n(i,j)$ for
$i,j \in E$ and assume that there is a reference state $e \in E$
such that $0 < G(e,j) < \infty$ for all $j \in E$.  This implies
that any state can be reached from $e$ and that every state is
transient.  For the chains to which we apply the theory, the
state space $E$ is  a partially ordered set with unique
minimal element $e$ and transition matrix
$P$ such that $P(k,\ell) = 0$ unless $k < \ell$, so that the
sample paths of the chain are {\em increasing} and
\[
G(i,j) 
= \bP^i\{X_n = j \; \text{for some $n \in \bN_0$}\}
=:\bP^i\{\text{$X$  hits $j$}\}
\]
for all $i,j \in E$.

A function $f: E \rightarrow \bR_+$ is said to be {\em excessive}
(respectively, {\em regular}) if 
$\sum_{j \in E} P(i,j) f(j) =: P f(i) \le f(i)$ for all
$i \in E$ (respectively,
$P f(i) = f(i)$ for all $i \in E$).  
Excessive functions are also called {\em non-negative
superharmonic functions}.  Similarly, regular functions
are also called {\em non-negative harmonic functions}. 
Given a finite measure $\mu$ on $E$, define a function $G \mu: E \to \bR_+$ by 
$G \mu(i) := \sum_{j \in I}
G(i,j) \mu(\{j\})$ for $i \in E$.
The function $G \mu$ is excessive and
is called the {\em potential} of the measure $\mu$.
The Riesz decomposition  says that
any excessive function $f$ has a unique decomposition
$f = h + p$, where $h$ is regular and $p = G \nu$ is the potential
of a unique measure $\nu$.

Note for any excessive function $f$ that
$f(e) \ge \sup_{n \in \bN_0} P^n(e,j) f(j)$,
and so $f(e) = 0$ implies that $f = 0$.  
Therefore, any excessive function is a constant multiple of an
element of the set $S$ of excessive functions that take the value
$1$ at $e$.  The set $S$ is a compact convex metrizable subset of
the locally convex topological vector space $\bR^E$.

The {\em Martin kernel with reference state $e$} is given by
\[
   K(i,j) := \frac{G(i,j)}{G(e,j)}= 
             \frac{\bP^i\{\text{$X$  hits $j$}\}}{\bP^e\{\text{$X$  hits $j$}\}};
\]
that is, $K(\cdot,j)$ is the potential of the unit point mass at $j$ normalized
to have value $1$ at the point $e \in E$.
For each $j \in E$ the function $K(\cdot,j)$ belongs to $S$
and is non-regular.
Moreover, $K(\cdot,j)$ is an extreme point of $S$ and 
any extreme point of $S$ that is not of the form
$K(\cdot,j)$ for some $j \in E$ is regular.
It also follows from the Riesz decomposition that the map
$\phi: E \to S$ given by $\phi(j) := K(\cdot,j)$
is injective.  Therefore, we can identify $E$ with its image
$\phi(E) \subset S$ that sits densely inside the compact closure
$F$ of $\phi(E)$ in $S$.  With the usual slight abuse of terminology,
we treat $E$ as a subset of $F$ and use the alternative
notation $\bar E$ for $F$.  The construction of the
compact metrizable space $\bar E$ from $E$
using the transition matrix $P$ and the reference state $e$
is the {\em Doob-Martin compactification} of $E$ and the set
\[
\partial E := \bar E \setminus E
\]
is the {\em Doob-Martin boundary} of $E$.

By definition, a sequence $(j_n)_{n \in \bN}$ in $E$ converges
to a point in $\bar E$
if and only if the sequence of real numbers
$(K(i,j_n))_{n\in\bN}$ converges for all $i\in E$. Each function
$K(i,\cdot)$ extends continuously to $\bar E$ and we call the
resulting function $K: E \times \bar E \rightarrow \bR$
the {\em extended Martin kernel}.

The set of extreme points $F_{\mathrm{ex}}$ of the convex set $F$ is a $G_\delta$
subset of $F$ and any regular function $h \in S$ (that is,
any regular function $h$ with $h(e) = 1$) has the representation
\[
h = \int K(\cdot, y) \, \mu(dy)
\]
for some unique probability measure on $F$ that assigns all of
its mass to $F_{\mathrm{ex}} \cap E^c \subseteq \partial E$.

The primary probabilistic consequence of the Doob-Martin compactification
is that for any initial state $i$ the limit 
$X_\infty:=\lim_{n \rightarrow \infty} X_n$ exists $\bP^i$-almost surely in the topology of $F$
and the limit belongs to $F_{\mathrm{ex}}$, 
$\bP^i$-almost surely.

If $h$ is a regular function (not identical to $0$), then the corresponding
{\em Doob $h$-transform} is the Markov chain $(X_n^{(h)})_{n \in \bN_0}$
with state space $E^h := \{i \in E : h(i) > 0\}$ and transition matrix
\[
P^{(h)}(i,j) := h(i)^{-1} P(i,j) h(j), \quad i,j \in E^h.
\]
When $h$ is strictly positive, 
the Doob-Martin compactification of $E$ and its set of extreme points are the
same for $P$ and $P^{(h)}$.  

The regular function $h$ is extremal 
if and only if the limit  $\lim_{n \rightarrow \infty} X_n^h$
is almost surely equal to a single point $y$ for some
$y \in F$, in which case $y \in F_{\mathrm{ex}}\cap E^c$ and $h = K(\cdot,y)$.
In particular, $h$ is extremal if and only if the tail $\sigma$-field
of $(X_n^{(h)})_{n \in \bN_0}$ is trivial.
In this case, the transformed chain $(X_n^{(h)})_{n \in \bN_0}$ may be thought of as the
original chain $(X_n)_{n \in \bN_0}$ conditioned to converge to $y$.
The original chain is a mixture of such conditioned chains, where
the mixing measure is the unique probability measure $\nu$ 
supported on $F_{\mathrm{ex}} \cap E^c \subseteq \partial E$ such that
$1 = \int K(\cdot, y) \, \nu(dy)$. Further, $\nu$ is the distribution of $X_\infty$
under $\bP^e$.

The Doob-Martin boundary provides a representation of the non-negative harmonic
functions.  We close this review section with a brief discussion of a measure 
theoretic boundary concept that has a more direct relation to tail $\sigma$-fields 
in the trickle-down case.
 
The set $\bH$ of all \emph{bounded} harmonic functions
is a linear space and indeed a Banach space when endowed with the supremum norm.
The \emph{Poisson boundary} is a measure space $(M,\cA,\mu)$ with the property
that $L^\infty(M,\cA,\mu)$ and $\bH$ are isomorphic as Banach spaces.
The Doob-Martin boundary $\partial E$ together with its Borel $\sigma$-field 
and the distribution $\nu$ of $X_\infty$ under $\bP^e$ provides such a measure space. 

Our models have the specific feature that, loosely speaking, `time is a function
of space': the state space $E$ of a trickle-down chain $(X_n)_{n\in\bN_0}$ may be
written as the disjoint union of the sets
\begin{equation*}
   E_n  :=  \{x\in E:\, \bP^e\{X_n=x\}>0\}.
\end{equation*}
Let $\cT$ be the tail $\sigma$-field of the chain. Consider now the map that takes 
a bounded, $\cT$-measurable random variable $Z$ to the function $h:E\to\bR$ defined by
\begin{equation*}
  h(x) :=  \frac{1}{\bP^e\{X_n=x\}}\int_{\{X_n=x\}} Z\, d\bP^e,
\end{equation*}
for all $x\in E_n$, on each $E_n$ separately. Note that $h(X_n)=\bE^e[Z|X_n]$. 
Using martingale convergence and the Markov property, it follows that this map is a 
Banach space isomorphism between $L^\infty(\Omega,\cT,\bP^e)$ and $\bH$. 

For any embedding in which the chain converges to a limit $X_\infty$, 
this limit is $\cT$-measurable.  The limit in the Doob-Martin
compactification of a transient chain generates the invariant $\sigma$-field 
up to null sets,
where for a chain $(X_n)_{n \in \bN_0}$ with state space $E$, an event $A$ is invariant if
there is a product measurable subset $B \subseteq E^{\bN_0}$ such that for all $n \in \bN_0$
the symmetric difference $A\, \triangle \,\{(X_n, X_{n+1}, \ldots) \in B\}$ has zero 
probability.
In our models, the limit $X_\infty$ in the Doob-Martin compactification generates the tail
$\sigma$-field, because it is possible to reconstruct the value of the time parameter from
the state of the process at an unspecified time. 
Conversely, from the tail $\sigma$-field we may
obtain the Poisson boundary but not, in general,  the Doob-Martin boundary.

\section{Compactification for trickle-down processes}
\label{S:general_trickle}

For each $ u \in \mathbf I$, let $Q^u$ be a transition matrix on
$\mathbf S^u \subseteq \mathbb N_0^{\beta(u)}$ 
with the properties described in Section \ref{SS:trickle_down_chains}.
The following result is immediate from the construction of the trickle-down
chain $X$ and Remark~\ref{R:hit_condition}.

\begin{lemma}
\label{L:hit_prob_is_product}
Consider elements $x = (x^u)_{u \in \II}$
and $y = (y^u)_{u \in \II}$ of $\SSS$.  
Write $m^u = \sum_{v \in \beta(u)} (x^u)^v$
and 
$n^u = \sum_{v \in \beta(u)} (y^u)^v$.
Then,
\[
\bP^x\{X \; \text{hits} \; y\}
=
\prod_{u \in \II}
\bQ^{u,x^u}\{Y_{n^u - m^u}^u = y^u\}
=
\prod_{u \in \II}
\bQ^{u,x^u}\{Y^u \; \text{hits} \; y^u\}.
\]
The product is zero unless $x \preceq y$
(equivalently, $x^u \le y^u$ for all $u \in \II$).
Only finitely many terms in the
product differ from $1$, because
$x^u = y^u = (0,0,\ldots)$ (equivalently, $m^u = n^u = 0$)
for all but finitely many values of $u \in \II$.
\end{lemma}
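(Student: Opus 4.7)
The plan is to leverage the construction of the trickle-down chain from independent routing chains together with the factorization identity already established in Remark~\ref{R:hit_condition}; essentially the claim is an immediate translation of that set-theoretic factorization into a probabilistic statement, plus a trivial observation about the deterministic relation between the norm $|Y^u_k|$ and the time $k$.

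First I would unfold the meaning of $\bP^x\{X \text{ hits } y\}$. By the construction in Section~\ref{SS:trickle_down_chains}, the chain $X$ started from $x$ may be realized from a family $(Y^u)_{u\in\II}$ of independent routing chains, with $Y^u$ distributed as $\bQ^{u,x^u}$, by $X_n^u = Y^u_{A_n^u}$ for appropriate clocks $A_n^u$. Since the trajectories of $X$ are strictly $\preceq$-increasing, the event $\{X\text{ hits }y\}$ occurs if and only if there exists $N \ge 0$ with $X_N = y$, and this $N$ (if it exists) is unique. Via the pathwise construction, this event corresponds exactly to the event inside the first set in Remark~\ref{R:hit_condition} (with $x^u$ playing the role of $\sigma^u(a_m^u(\sigma))$), which by that remark coincides with the product event
\[
\bigcap_{u\in\II}\bigl\{\sigma^u \in \Sigma^u :\sigma^u(p)=x^u\text{ and }\sigma^u(q)=y^u\text{ for some }p\le q\bigr\}.
\]
Translating to the routing chains, this is $\bigcap_u \{Y^u \text{ hits } y^u\}$. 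Because the $Y^u$ are independent under $\bigotimes_u \bQ^{u,x^u}$, the probability factorizes, giving the right-hand equality.

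For the middle equality, I would use the defining property of routing instructions: each step satisfies $\sigma^u_{n} = \sigma^u_{n-1} + e_{v_n}$, so $|Y^u_k| = k + |Y^u_0|$ for all $k$, by a one-line induction. Starting from $x^u$ we have $|Y^u_0| = m^u$, hence $|Y^u_k| = m^u + k$; in particular $Y^u$ can visit the state $y^u$ (which has norm $n^u$) only at time $k = n^u - m^u$. Therefore $\{Y^u\text{ hits }y^u\} = \{Y^u_{n^u-m^u} = y^u\}$ $\bQ^{u,x^u}$-a.s.

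Finally, I would record the two small housekeeping remarks. If $x \not\preceq y$, then $(x^u)^v > (y^u)^v$ for some $u,v$; on that coordinate the routing chain cannot decrease, so $\bQ^{u,x^u}\{Y^u\text{ hits }y^u\} = 0$ and the product vanishes, matching the fact that the increasing chain $X$ cannot then hit $y$. Since $x,y\in\SSS$ are both produced by finitely many trickle-down steps, only finitely many $x^u$ and $y^u$ are nonzero; for all other vertices $m^u = n^u = 0$ so $x^u = y^u = (0,0,\ldots)$ and the corresponding factor is $\bQ^{u,0}\{Y^u_0 = 0\} = 1$. I do not expect any real obstacle here: the only subtlety is keeping the bookkeeping between the global times $m,n$ of the trickle-down chain and the local times $m^u,n^u$ of each routing chain straight, and this is handled cleanly by invoking Remark~\ref{R:hit_condition}.
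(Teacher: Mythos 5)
Your proposal is correct and follows exactly the route the paper intends: the paper gives no written proof, declaring the lemma ``immediate from the construction of the trickle-down chain $X$ and Remark~\ref{R:hit_condition}'', and your argument is a faithful elaboration of precisely those two ingredients (the set-theoretic factorization of the hitting event, independence of the routing chains, and the observation that $|Y^u_k|=m^u+k$ forces the hit to occur at local time $n^u-m^u$).
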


\begin{corollary}
\label{C:Martin_kernel_is_product}
The Martin kernel of the Markov chain $X$ with respect 
to the reference state $\hat 0$ is given by
\[
K(x,y) 
=
\prod_{u \in \II} K^u(x^u, y^u),
\]
where $K^u$ is the Martin kernel of the Markov chain $Y^u$
with respect to reference state $(0,0,\ldots) \in \SSS^u$.
The product is zero unless $x \preceq y$
(equivalently, $x^u \le y^u$ for all $u \in \II$).
Only finitely many terms in the product differ from $1$, because
$x^u = (0,0,\ldots)$
for all but finitely many values of $u \in \II$.
\end{corollary}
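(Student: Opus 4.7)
The plan is to apply Lemma~\ref{L:hit_prob_is_product} twice (once in the numerator and once in the denominator) and watch the product factor term by term. Recall that for a chain with strictly increasing paths in the partial order, the Green kernel collapses to the hitting probability, so for the trickle-down chain $X$ and its reference state $\hat 0 \in \SSS$ (the configuration with $x^u = (0,0,\ldots)$ for every $u\in\II$) one has
\[
K(x,y) \;=\; \frac{G(x,y)}{G(\hat 0, y)} \;=\; \frac{\bP^x\{X \text{ hits } y\}}{\bP^{\hat 0}\{X \text{ hits } y\}}.
\]
So the first step is simply to write the right-hand side using the Markov chain definition of the Martin kernel from Section~\ref{S:Martin_general}. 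The corresponding Martin kernel for each routing chain $Y^u$ is $K^u(s,t) = \bQ^{u,s}\{Y^u \text{ hits } t\}/\bQ^{u,0}\{Y^u \text{ hits } t\}$ with reference state $(0,0,\ldots)\in\SSS^u$.

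Next, I would apply Lemma~\ref{L:hit_prob_is_product} to both the numerator and the denominator. For the numerator this gives
\[
\bP^x\{X \text{ hits } y\} \;=\; \prod_{u \in \II} \bQ^{u,x^u}\{Y^u \text{ hits } y^u\},
\]
and for the denominator, using that the $u$-coordinate of $\hat 0$ is $(0,0,\ldots)$,
\[
\bP^{\hat 0}\{X \text{ hits } y\} \;=\; \prod_{u \in \II} \bQ^{u,0}\{Y^u \text{ hits } y^u\}.
\]
Forming the ratio term by term in the (well-defined) product then yields $\prod_{u\in\II} K^u(x^u,y^u)$, with the convention that the ratio vanishes on any $u$ for which $\bQ^{u,0}\{Y^u \text{ hits } y^u\} = 0$ is forced by $x \not\preceq y$; the product as a whole is $0$ unless $x\preceq y$, which matches the first assertion.

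Finally, I would verify the finiteness claim. By construction of $\SSS$, the set of $u\in\II$ with $x^u \neq (0,0,\ldots)$ is finite. For every $u$ outside this set, $x^u$ equals the reference state of $Y^u$, so $K^u(x^u,y^u) = K^u((0,0,\ldots),y^u) = 1$ by the very definition of $K^u$. Hence only finitely many factors in the product can differ from $1$, giving a genuinely finite product whose value is independent of the (arbitrary, but consistent) ordering of $\II$. There is no real obstacle here beyond bookkeeping: the content of the corollary lies in Lemma~\ref{L:hit_prob_is_product}, and Corollary~\ref{C:Martin_kernel_is_product} is just the quotient form of that independence statement together with the observation that the trivial components cancel automatically.
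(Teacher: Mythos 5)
Your proof is correct and is essentially the paper's own argument: express $K(x,y)$ and each $K^u(\xi,\zeta)$ as ratios of hitting probabilities and apply Lemma~\ref{L:hit_prob_is_product} to the numerator and the denominator separately. One small wording point: the denominators $\bQ^{u,0}\{Y^u \text{ hits } y^u\}$ are always strictly positive by the standing reachability assumption on the routing chains, so the product vanishes because a \emph{numerator} $\bQ^{u,x^u}\{Y^u \text{ hits } y^u\}$ is zero when $x^u \not\le y^u$, not because a denominator is forced to zero -- but your stated conclusion is the right one.
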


\begin{proof}
It suffices to note that
\[
K(x,y) = \frac{\bP^x\{X \; \text{hits} \; y\}}{\bP^{\hat 0}\{X \; \text{hits} \; y\}}
\]
and
\[
K^u(\xi,\zeta) = \frac{\bQ^{u,\xi}\{Y^u \; \text{hits} \; \zeta\}}{\bQ^{u,0}\{Y^u \; \text{hits} \; \zeta\}},
\]and then apply Lemma~\ref{L:hit_prob_is_product}.
\end{proof}

\begin{example}
\label{E:BST}
Consider the BST process from the Introduction.  Recall
that in this case the directed graph $\II$ is the complete binary tree
$\{0,1\}^\star$ and each of the processes $(Y_n^u + (1,1))_{n \in \bN_0}$
is the classical P\'olya urn in which we have an urn consisting
of black and white balls, we draw a ball uniformly at random
at each step and replace it along with one of the same
color, and we record the number of black and white balls
present in the urn at each step.  Note that if we start
the P\'olya urn with $b$ black and $w$ white balls,
then the probability that we ever see $B$ black balls
and $W$ white balls is the probability that after
$(B+W)-(b+w)$ steps we have added $B-b$ black balls
and $W-w$ white balls.  The probability of adding the extra balls
in a particular specified order is
\[
\frac{
b(b+1) \cdots (B-1) w(w+1) \cdots (W-1)
}
{
(b+w)(b+w+1) \cdots (B+W-1)
}
\]
(the fact that this probability is the same for all orders
is the fundamental exchangeability fact regarding the P\'olya urn).
The probability of adding the required extra balls of each color
in some order is therefore
\[
\frac{((B+W)-(b+w))!}{(B-b)! (W-w)!}
\frac{
b(b+1) \cdots (B-1) w(w+1) \cdots (W-1)
}
{
(b+w)(b+w+1) \cdots (B+W-1)
}.
\]
Hence,
\[
\begin{split}
& \bQ^{u,\xi}\{Y^u \; \text{hits} \; \zeta\} \\
& \quad =
\frac
{
((\zeta^{u0} + \zeta^{u1}) - (\xi^{u0} + \xi^{u1}))!
}
{
(\zeta^{u0} - \xi^{u0})! (\zeta^{u1} - \xi^{u1})!
}
\frac
{
(\xi^{u0} +1) \ldots \zeta^{u0} \times (\xi^{u1} +1) \ldots \zeta^{u1}
}
{
(\xi^{u0} + \xi^{u1} + 2) (\xi^{u0} + \xi^{u1} + 1) \ldots (\zeta^{u0} + \zeta^{u1} + 1)
} \\
\end{split}
\]
for $\xi \le \zeta$, and so
\[
\begin{split}
K^u(\xi,\zeta)
&  =
\frac
{
(\xi^{u0} + \xi^{u1} + 1)!
}
{
\xi^{u0}! \xi^{u1}!
}
\frac
{
(\zeta^{u0} - \xi^{u0} + 1) \ldots \zeta^{u0} \times (\zeta^{u1} - \xi^{u1} + 1) \ldots \zeta^{u1}
}
{
((\zeta^{u0} + \zeta^{u1}) - (\xi^{u0} + \xi^{u1}) + 1) \ldots (\zeta^{u0} + \zeta^{u1})
}
\\
& =
\frac
{(\xi^{u0} + \xi^{u1} + 1)!
}
{
\xi^{u0}! \xi^{u1}!
}
\frac
{
\zeta^{u0}! \zeta^{u1}!
}
{
(\zeta^{u0} + \zeta^{u1} + 1)!
} 
\\
& \quad \times
\frac 
{
((\zeta^{u0} + \zeta^{u1}) - (\xi^{u0} + \xi^{u1}))!
}
{
(\zeta^{u0} - \xi^{u0})! (\zeta^{u1} - \xi^{u1})!
}
(\zeta^{u0} + \zeta^{u1} + 1).
\\
\end{split}
\]

Suppose that $x,y \in \SSS$ with $x \preceq y$.
It follows from Corollary~\ref{C:Martin_kernel_is_product} that
\[
\begin{split}
K(x,y) 
& =
\prod_{u \in \II}
\frac
{
((x^u)^{u0} + (x^u)^{u1} + 1)!
} 
{
(x^u)^{u0}! (x^u)^{u1}!
}
\frac
{
(y^u)^{u0}! (y^u)^{u1}!
}
{
((y^u)^{u0} + (y^u)^{u1} + 1)!
}
\\
& \quad \times
\frac 
{
(((y^u)^{u0} + (y^u)^{u1}) - ((x^u)^{u0} + (x^u)^{u1}))!
}
{
((y^u)^{u0} - (x^u)^{u0})! ((y^u)^{u1} - (x^u)^{u1})!
}
((y^u)^{u0} + (y^u)^{u1} + 1).
\\
\end{split}
\]

Recall from Example~\ref{E:dag_tree} that we may associate $x$ and $y$
with the two subtrees
\[
\sss = \{\emptyset\} \cup \{v \in \II: (x^u)^v > 0 \; 
              \text{for the unique $u \in \alpha(v)$}\}
\]
and
\[
\ttt = \{\emptyset\} \cup \{v \in \II: (y^u)^v > 0 \; 
              \text{for some the unique $u \in \alpha(v)$}\},
\]
in which case $(x^u)^v = \#\{w \in \sss : v \le w\} =: \#\sss(v)$ for
$v \in \sss \setminus \{\emptyset\}$ and $u \in \alpha(v)$
(respectively, $(y^u)^v = \#\{w \in \ttt : v \le w\} =: \#\ttt(v)$ for
$v \in \ttt \setminus \{\emptyset\}$ and $u \in \alpha(v)$).
Note for $\varepsilon = 0,1$ that
\[
(x^u)^{u\varepsilon} 
=
\begin{cases}
\#\sss(u \varepsilon),& \text{if $u \in \sss$},\\
0,& \text{otherwise},
\end{cases}
\]
and that
\[
(x^u)^{u0} + (x^u)^{u1} + 1 
=
\begin{cases}
\#\sss(u),& \text{if $u \in \sss$},\\
1,& \text{otherwise}.
\end{cases}
\]
Similar relations exist for $y$ and $\ttt$.  
It follows that
\[
\prod_{u \in \II}
\frac{
((x^u)^{u0} + (x^u)^{u1} + 1)!
} 
{
(x^u)^{u0}! (x^u)^{u1}!
}
= 
\#\sss!,
\]
\[
\prod_{u \in \II}
\frac
{
(y^u)^{u0}! (y^u)^{u1}!
}
{
((y^u)^{u0} + (y^u)^{u1} + 1)!
}
=
\frac{1}{\#\ttt!},
\]
\[
\prod_{u \in \II}
\frac 
{
(((y^u)^{u0} + (y^u)^{u1}) - ((x^u)^{u0} + (x^u)^{u1}))!
}
{
((y^u)^{u0} - (x^u)^{u0})! ((y^u)^{u1} - (x^u)^{u1})!
}
=
\frac
{
(\#\ttt - \#\sss)!
}
{
\prod_{u \in \ttt \setminus \sss} \#\ttt(u)
},
\]
and
\[
\prod_{u \in \II}
((y^u)^{u0} + (y^u)^{u1} + 1)
=
\prod_{u \in \ttt} \#\ttt(u),
\]
so we arrive at the simple formula
\begin{equation}
\label{E:Martin_kernel_BST}
K(x,y)
=
\binom{\#\ttt}{\#\sss}^{-1} \prod_{u \in \sss} \#\ttt(u).
\end{equation}

This formula may also be obtained
without using Corollary~\ref{C:Martin_kernel_is_product}
as follows.  With a slight abuse of notation, we think of the process
$(X_n)_{n \in \bN_0}$ as taking values in the set of finite subtrees
of $\{0,1\}^\star$ containing the root $\emptyset$.  
We first want a formula for $\bP^{\sss}\{X \; \text{hits} \; \ttt\}$ 
when $\sss$ and $\ttt$ are two such trees with 
$\sss \subseteq \ttt$.
For ease of notation, set $k := \#\sss$ and $n :=\#\ttt$.
It is known (see, for example, \cite[p.316]{SedFlaj}) that
\begin{equation}
\label{E:BST_prob}
\bP^{\{\emptyset\}}\{X \; \text{hits} \; \ttt\}
=\bP^{\{\emptyset\}}\{X_n = \ttt\} 
= \prod_{u \in \ttt} (\# \ttt(u))^{-1},
\end{equation}
Write $v_1,\ldots,v_{k+1}$ for the ``external 
vertices'' of $\sss$; that is, the elements of $\{0,1\}^\star$ that are connected
to a vertex of  $\sss$ by a directed edge, but are not vertices of
$\sss$ themselves (recall Figure~\ref{fig:binary_tree_external}). Denote by $\ttt(v_j)$, $j=1,\ldots,k+1$ 
the subtrees of $\ttt$ that are rooted 
at these vertices; that is, the  $\ttt(v_j)$ are the connected components
of $\ttt \setminus \sss$.  In order for the
BST process to pass from $\sss$ to $\ttt$ it
needs to place the correct number $n_j :=\# \ttt(v_j)$ 
of vertices into each of these subtrees 
and, moreover, the subtrees have to be equal to $\ttt(v_j)$, for $j=1,\ldots,k+1$.  
The process that tracks the number of vertices in each subtree
is, after we add the vector $(1, \ldots, 1)$, a multivariate P\'olya urn model starting 
with $k+1$ balls, all of different colors. Thus, the probability that
each subtree has the correct number of vertices is
\[
\binom{n - k}{n_1,\ldots,n_{k+1}}\,\frac{\prod_{i=1}^{k+1} n_i!}{(k+1)\cdot\ldots\cdot (n-1)\cdot n} 
= 
\binom{\# \ttt}{\# \sss}^{-1},
\]
using a standard argument for the P\'olya urn \cite[Chapter 4.5]{MR0488211}. 
Moreover, it is apparent from the recursive structure of the BST process that,
conditional on  $k+1$ subtrees receiving the correct number of vertices, the probability the subtrees are actually $\ttt(v_1), \ldots, \ttt(v_{k+1})$ 
is 
\[
\prod_{i=1}^{k+1} \prod_{v \in \ttt(v_i)} (\# \ttt(v))^{-1}
=
\prod_{v \in \ttt \setminus \sss} (\# \ttt(v))^{-1}.
\]
Thus,
\begin{equation}
\label{E:transition_probs_BST}
\bP^{\sss}\{X \; \text{hits} \; \ttt\} = \binom{\# \ttt}{\# \sss}^{-1} \prod_{v \in \ttt \setminus \sss} (\# \ttt(v))^{-1},
\end{equation}
and \eqref{E:Martin_kernel_BST} follows upon taking the appropriate ratio.
\end{example}

With Example~\ref{E:BST} in mind, we now begin to build
a general framework for characterizing the Doob-Martin
compactification of a trickle-down chain in terms of the
compactifications of each of the routing chains.
 
\begin{proposition}
\label{P:component_convergence}
Suppose $(y_n)_{n \in \bN_0}$ is a sequence in $\SSS$
such that $y_\infty^u := \lim_{n \rightarrow \infty} y_n^u$ exists
in the Doob-Martin topology of $\bar \SSS^u$ for each $u \in \II$.  Then,
$(y_n)_{n \in \bN_0}$ converges in the Doob-Martin topology
of $\SSS$ to a limit $y_\infty$ and the value at $(x,y_\infty)$
of the extended Martin kernel is
$K(x,y_\infty) = \prod_{u \in \II} K^u(x^u, y_\infty^u)$.
\end{proposition}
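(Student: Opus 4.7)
The plan is to reduce Doob-Martin convergence in $\SSS$ to pointwise convergence of the Martin kernel and then pass to the limit one coordinate at a time using the product formula from Corollary~\ref{C:Martin_kernel_is_product}. By definition of the topology on $\bar\SSS$, it suffices to prove that $K(x, y_n)$ converges in $\bR$ for every $x \in \SSS$ and to identify the limit; this limit will then automatically be the value $K(x, y_\infty)$ of the extended Martin kernel at the common limit point $y_\infty \in \bar\SSS$.

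First I would apply Corollary~\ref{C:Martin_kernel_is_product} to write $K(x, y_n) = \prod_{u \in \II} K^u(x^u, y_n^u)$, and then observe that this product is effectively finite. Letting $F := \{u \in \II : x^u \neq (0,0,\ldots)\}$, which is a finite subset of $\II$ by the definition of $\SSS$, every factor with $u \notin F$ equals $1$, because the Martin kernel $K^u$ has reference state $(0,0,\ldots)$ and hence $K^u((0,0,\ldots), \zeta) = 1$ for all $\zeta \in \SSS^u$. By continuity of the extended Martin kernel $K^u$ in its second argument, the same identity extends to $\zeta \in \bar\SSS^u$. Thus for every $n$,
\[
K(x, y_n) = \prod_{u \in F} K^u(x^u, y_n^u),
\]
a product of finitely many real numbers indexed by the finite set $F$.

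Next I would invoke the hypothesis that $y_n^u \to y_\infty^u$ in $\bar\SSS^u$ for each $u \in \II$, which by the defining property of the extended Martin kernel on each component means $K^u(\xi, y_n^u) \to K^u(\xi, y_\infty^u)$ for every $\xi \in \SSS^u$. Applying this at $\xi = x^u$ for each $u \in F$ and multiplying the finitely many convergent factors yields
\[
K(x, y_n) \longrightarrow \prod_{u \in F} K^u(x^u, y_\infty^u) = \prod_{u \in \II} K^u(x^u, y_\infty^u),
\]
the last equality because the extended kernel evaluates to $1$ on each factor indexed by $u \notin F$. Since this holds for every $x \in \SSS$, the sequence $(y_n)$ converges in the Doob-Martin topology to some $y_\infty \in \bar\SSS$, and $K(x, y_\infty)$ has the claimed product form. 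The whole argument is bookkeeping once Corollary~\ref{C:Martin_kernel_is_product} is in hand; the one detail I want to state carefully is the reduction to a finite product, since an infinite product whose factors merely tend to $1$ need not converge to $1$ in general—here the tail factors are exactly $1$, not merely close to it, so this potential obstacle does not materialize.
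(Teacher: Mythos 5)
Your argument is correct and is essentially the paper's own proof: both rest on the product formula of Corollary~\ref{C:Martin_kernel_is_product} together with the observation that only the finitely many factors with $x^u \ne (0,0,\ldots)$ differ from $1$, which justifies interchanging the limit with the product. Your extra remark that the tail factors are exactly $1$ (not merely close to $1$) is the same finiteness point the paper makes, just stated more explicitly.
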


\begin{proof} 
The assumption that 
$y_\infty^u := \lim_{n \rightarrow \infty} y_n^u$
exists in the Doob-Martin topology of $\bar \SSS^u$ for each $u \in \II$
implies that $\lim_{n \rightarrow \infty} K^u(\xi,y_n^u)$ exists
for each $u \in \II$ and $\xi \in \SSS^u$.  This limit is, by definition,
the value $K^u(\xi, y_\infty^u)$ of the extended Martin kernel.  
We need to show for all $x \in \SSS$ that 
$\lim_{n \rightarrow \infty} K(x,y_n)$ exists and is given by
$\prod_{u \in \II} K^u(x^u, y_\infty^u)$.  It follows from 
Corollary~\ref{C:Martin_kernel_is_product} that
$K(x,y_n) = \prod_{u \in \II} K^u(x^u, y_n^u)$.
We also know from that result that we may restrict the
product to the fixed, finite set of $u$ for which $x^u \ne (0,0,\ldots)$,
and hence we may interchange the
limit and the product.
\end{proof}

\begin{remark} 
Proposition~\ref{P:component_convergence} shows that if 
the sequence $(y_n)_{n \in \bN_0}$ in $\SSS$ is
such that for each $u \in \II$ the component sequence 
$(y_n^u)_{n \in \bN_0}$ converges in the Doob-Martin
compactification of $\SSS^u$, then $(y_n)_{n \in \bN_0}$
converges in the Doob-Martin compactification of $\SSS$.

Establishing results in the converse direction is somewhat tricky,
since $K(x,y_n) = \prod_{u \in \II} K^u(x^u, y_n^u)$
might converge because $K^v(x^v, y_n^v)$ converges to $0$ for
some particular $v \in \II$, and so we are not able to conclude that
$K^u(x^u, y_n^u)$ converges for all $u \in \II$.  Instances
of this possibility appear in Section~\ref{S:Mallows} and Section~\ref{S:q_chain}.
\end{remark}

The following set of hypotheses gives one quite general setting in which
it is possible to characterize the Doob-Martin compactification
of $\SSS$ in terms of the compactifications of the component spaces
$\SSS^u$.  These
hypotheses are satisfied by a number of interesting examples such
as the binary search tree and the random recursive tree processes
(see Example~\ref{Ex:BST_ratios} and Example~\ref{Ex:space_time_RW} below
as well as Section~\ref{S:BST_and_DST} and Section~\ref{S:RRT_and_CRT}).
The key condition is part (iii) of the following set of hypotheses: 
it requires that the Doob-Martin boundary of the routing chain for
the vertex $u$ may be thought of as a set of subprobability measures on $\beta(u)$ that arise as the vector of limiting proportions of particles
that have been routed onward to the various elements of $\beta(u)$.

\begin{hypothesis}
\label{H:convergence_equivalent_ratios}
Suppose that the following hold for all $u \in \II$.
\begin{itemize}
\item[(i)]
Writing $|\xi| = \sum_{v \in \beta(u)} \xi^v$ for $\xi \in \SSS^u$,
the sets $\{\xi \in \SSS^u : |\xi| = m\}$ are finite for all 
$m \in \bN_0$, so that if $(\zeta_n)_{n \in \bN_0}$ 
is a sequence from $\SSS^u$, then the two conditions 
\begin{equation}
\label{E:finitely_many_visits}
\#\{n \in \bN_0 : \zeta_n = \zeta\} < \infty \; \text{ for all $\zeta \in \SSS^u$}
\end{equation}
and
\begin{equation}
\label{E:size_to_infinity}
\lim_{n \rightarrow \infty} |\zeta_n| = \infty
\end{equation}
are equivalent.
\item[(ii)]
In order that a sequence $(\zeta_n)_{n \in \bN_0}$ from $\SSS^u$
is such that $K^u(\xi, \zeta_n)$ converges as $n \rightarrow \infty$
for all $\xi \in \SSS^u$, it is necessary and sufficient that
either 
\[
\#\{n \in \bN_0 : \zeta_n \ne \zeta\} < \infty
\;
\text{for some $\zeta \in \SSS^u$}
\] 
or that
the equivalent conditions \eqref{E:finitely_many_visits}
and \eqref{E:size_to_infinity} hold and, in addition,
\begin{equation}
\label{E:existence_limit_proportions}
\lim_{n \rightarrow \infty} \frac{\zeta_n^{v}}{ |\zeta_n|} \; \text{exists for all $v \in \beta(u)$}.
\end{equation}
\item[(iii)]
If $(\zeta_n')_{n \in \bN_0}$ and $(\zeta_n'')_{n \in \bN_0}$ 
are two sequences from $\SSS^u$ such that 
$\#\{n \in \bN_0 : \zeta_n' = \zeta\} < \infty$ 
and 
$\#\{n \in \bN_0 : \zeta_n'' = \zeta\} < \infty$
for all $\zeta \in \SSS^u$ and both $K^u(\xi, \zeta_n')$ and
$K^u(\xi, \zeta_n'')$ converge for all $\xi \in \SSS^u$, then
\[
\lim_{n \rightarrow \infty} K^u(\xi, \zeta_n') 
= 
\lim_{n \rightarrow \infty} K^u(\xi, \zeta_n'')
\]
for all $\xi \in \SSS^u$ if and only if
\[
\lim_{n \rightarrow \infty} 
\frac{(\zeta_n')^{v}}{|\zeta_n'|}
=
\lim_{n \rightarrow \infty} 
\frac{(\zeta_n'')^{v}}{|\zeta_n''|}
\]
for all $v \in \beta(u)$.  It follows that there is a natural bijection
between $\partial \SSS^u := \overline{\SSS^u} \setminus \SSS^u$, where $\overline{\SSS^u}$
is the Doob-Martin compactification of $\SSS^u$, and the set $\cS^u$  of subprobability
measures on $\beta(u)$ that are limits in the vague topology of probability
measures of the form 
\[
\frac{1}{|\zeta_n|} \sum_{v \in \beta(u)} \zeta_n^v \delta_v,
\]
where $(\zeta_n)_{n \in \bN_0}$ is a sequence from $\SSS^u$
that satisfies \eqref{E:finitely_many_visits}.
\item[(iv)]
The bijection between $\partial \SSS^u$ and
$\cS^u$ is a homeomorphism if
the former set is equipped with the trace of the Doob-Martin topology
and the latter set is equipped with the trace of the vague topology.
\item[(v)]
There is a collection 
$\RR^u \subseteq \{0,1\}^{\beta(u)} \cap \SSS^u$
such that if $(\zeta_n)_{n \in \bN_0}$ is a sequence from $\SSS^u$ that satisfies \eqref{E:finitely_many_visits}
and $\lim_{n \rightarrow 0} K^u(\eta, \zeta_n)$ exists for all $\eta \in \RR^u$,
then $\lim_{n \rightarrow 0} K^u(\xi, \zeta_n)$ exists for all $\xi \in \SSS^u$.  Moreover, if
$(\zeta_n')_{n \in \bN_0}$ and $(\zeta_n'')_{n \in \bN_0}$ 
are two sequences from $\SSS^u$ 
that both satisfy \eqref{E:finitely_many_visits} and 
\[
\lim_{n \rightarrow \infty} K^u(\eta, \zeta_n')
=
\lim_{n \rightarrow \infty} K^u(\eta, \zeta_n'')
\]
for all $\eta \in \RR^u$, then
\[
\lim_{n \rightarrow \infty} K^u(\xi, \zeta_n') 
= 
\lim_{n \rightarrow \infty} K^u(\xi, \zeta_n'')
\]
for all $\xi \in \SSS^u$.
\item[(vi)]
Suppose that
$(\zeta_n)_{n \in \bN_0}$ is a sequence from $\SSS^u$ such that \eqref{E:finitely_many_visits} holds
and $K^u(\xi, \zeta_n)$ converges as $n \rightarrow \infty$
for all $\xi \in \SSS^u$.  Let $\rho = (\rho^v)_{v \in \beta(u)}$
be the subprobability vector of limiting proportions defined
by \eqref{E:existence_limit_proportions}.  
The extended Martin kernel is such that $K^u(\xi,\rho) = 0$
whenever $\xi^v \ge 2$ for some $v \in \beta(u)$ with $\rho^v = 0$,
whereas if $\rho_v > 0$ for some $v \in \beta(u)$, then there exists
a sequence $(\xi_m)_{m \in \bN}$ from $\SSS^u$ such that
$\xi_m^v = m$, $\xi_m^w \in \{0,1\}$ for $w \ne v$, and
$K(\xi_m, \rho) > 0$.
\item[(vii)]
A subprobability vector $\rho$ belongs to $\cS^u$ if and only if
there is a sequence $(\sigma_n^u)_{n \in \bN_0} \in \Sigma^u$ such
that
\[
\lim_{n \rightarrow \infty} 
\frac{\sigma_n^u}{|\sigma_n^u|}
=
\lim_{n \rightarrow \infty} 
\frac{\sigma_n^u}{n}
=
\rho.
\]
\end{itemize}
\end{hypothesis}

\begin{example}
\label{Ex:BST_ratios}
Hypothesis~\ref{H:convergence_equivalent_ratios} 
holds if $\# \beta(u) = 2$ for all $u \in \II$
(for example, if $\II = \{0,1\}^\star$), $\SSS^u = (\bN_0)^{\beta(u)}$,
and the Markov chains
$Y^u = (Y_n^u)_{n \in \bN_0}$ are such that
$(Y_n^u + (1,1))_{n \in \bN_0}$ are all P\'olya's urns
starting with one black ball and one white ball.
This is a consequence of the results in \cite{MR0176518}.
Indeed, the same is true if for arbitrary $\II$
with $\beta(u)$ finite for all $u \in \II$ we take
$\SSS^u = (\bN_0)^{\beta(u)}$ and let $Y^u$ be an
urn scheme of the sort considered in \cite{MR0362614} where there
is a (not necessarily integer-valued)
finite measure $\nu_u$ on $\beta(u)$ that describes the initial
composition of an urn with balls whose ``colors'' are identified with the
elements of $\beta(u)$, balls are drawn at random and replaced
along with a new ball of the same color, and $Y_n^u$ records the number of 
balls of the various colors that have been drawn by time $n$.
In this general case, the extended Martin kernel is given by
\[
\frac
{(|\nu_u| + |\xi| - 1) (|\nu_u| + |\xi| - 2) \cdots |\nu_u|}
{\prod_{v \in \beta(u)} [(\nu_u^v + \xi^v - 1) (\nu_u^v + \xi^v - 2) \cdots \nu_u^v]}
\prod_{v \in \beta(u)} (\rho^v)^{\xi^v},
\]
where $|\nu_u| = \sum_{v \in \beta(u)} \nu_u^v$,
$|\xi| = \sum_{v \in \beta(u)} \xi^v$, and
$(\rho^v)^{\xi^v}$ denotes the value $\rho^v$ 
that the probability measure $\rho$ assigns to $\{v\}$
raised to the power $\xi^v$.  We may take the set $\RR^u$ in this case
to be the coordinate vectors $e_v$, $v \in \beta(u)$,
where $e_v$ has a single $1$ in the $v^{\mathrm{th}}$ component and $0$
elsewhere.  The set $\cS^u$ consists of all the probability measures
on the finite set $\beta(u)$.
\end{example}

\begin{example}
\label{Ex:space_time_RW}
Hypothesis~\ref{H:convergence_equivalent_ratios} also holds if the set
$\beta(u)$ is finite for all $u \in \II$, $\SSS^u = (\bN_0)^{\beta(u)}$, 
and the routing chain $Y^u$ is given by
$Y^u := (\sum_{k=1}^n W_k^u)_{n \in \bN_0}$,
where the $W_k^u$ are independent, identically distributed
$\SSS^u$-valued random variables with distribution that has
support the set of coordinate vectors.  If $p_u^v$ is the probability
that the common distribution of the $W_k^u$ assigns to 
the coordinate vector $e_v$, then
the extended Martin kernel is given by
\[
K^u(\xi,\rho) = \prod_{v \in \beta(u)} \left(\frac{\rho^v}{p_u^v}\right)^{\xi^v}.
\]
Results of this type go back to \cite{MR0120683} and are described in
\cite{MR0407981}.  Once again, we may take $\RR^u$ 
to be the set of coordinate vectors, and once again
$\cS^u$ consists of all the probability measures
on the finite set $\beta(u)$. 
\end{example}

In order to state a broadly applicable result in the converse
 direction of Proposition \ref{P:component_convergence}
we first
need to develop some more notation and collect together some auxiliary results.

Adjoin a point $\diamond$ to $\II$
and write $\II_\infty$ for the set of
sequences of the form $(u_n)_{n \in \bN_0}$
where either $u_n \in \II$ for all $n \in \bN_0$
and $\hat 0 = u_0 \rightarrow u_1 \rightarrow \ldots$
or, for some $N \in \bN_0$, $u_n \in \II$ for $n \le N$,
$\hat 0 = u_0 \rightarrow \ldots \rightarrow u_N$, and
$u_n = \diamond$ for $n>N$.  We think of $\II_\infty$
as the space of directed paths through $\II$ that start at $\hat 0$
and are possibly ``killed'' at some time and sent to the
``cemetery'' $\diamond$.  

Write $\cC_\infty$ for the countable collection of
subsets of $\II_\infty$ of the form 
$\{(v_n)_{n \in \bN_0} \in \II_\infty : v_k = u_k, \, 0 \le k \le n\}$,
where $n \in \bN_0$, $u_k \in \II$ for $0 \le k \le n$,
and $\hat 0 = u_0 \rightarrow \ldots \rightarrow u_n$.
Denote by $\cI_\infty$ the $\sigma$-field generated by $\cC_\infty$.
The following result is elementary and we leave its proof to the reader.

\begin{lemma}
\label{L:probs_on_I_infty}
Any probability measure on the measurable space
$(\II_\infty, \cI_\infty)$ is specified by its values on the sets in
$\cC_\infty$.  The space of such probability measures equipped
with the coarsest topology that makes each of the maps
$\mu \mapsto \mu(C)$, $C \in \cC_\infty$, continuous
is compact and metrizable.
\end{lemma}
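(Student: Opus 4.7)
The first assertion is a routine application of the Dynkin $\pi$-$\lambda$ theorem: $\cC_\infty$ is closed under finite intersections (two cylinders based on finite directed paths $u$ and $u'$ are either disjoint, if the paths are incompatible, or meet in the cylinder associated with the longer path), contains $\II_\infty$ itself (take $n=0$, $u_0 = \hat 0$), and generates $\cI_\infty$, so any two probability measures that agree on $\cC_\infty$ must coincide. For metrizability, enumerate the countable family $\cC_\infty = \{C_i : i \ge 1\}$ and check that $d(\mu,\nu) := \sum_{i=1}^\infty 2^{-i}|\mu(C_i) - \nu(C_i)|$ is a metric (genuinely separating points by the first assertion) inducing the specified topology.

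For compactness, which is equivalent to sequential compactness by metrizability, a diagonal argument applied to an arbitrary sequence $(\mu_n)$ of probability measures yields a subsequence $(\mu_{n_k})$ along which $\mu^\star(C) := \lim_k \mu_{n_k}(C)$ exists for every $C \in \cC_\infty$. The plan is then to produce a probability measure $\mu$ on $(\II_\infty,\cI_\infty)$ with $\mu(C) = \mu^\star(C)$ for each such $C$. For a finite directed path $u = (u_0,\ldots,u_N)$ starting at $\hat 0$, write $\kappa_u := (u_0,\ldots,u_N,\diamond,\diamond,\ldots) \in \II_\infty$ for the corresponding killed path. The prelimit identity
\[
\mu_{n_k}(\{\kappa_u\}) = \mu_{n_k}(C_u) - \sum_{w \in \beta(u_N)} \mu_{n_k}(C_{uw}) \ge 0,
\]
combined with Fatou's lemma applied to the (possibly countably infinite) sum over $w \in \beta(u_N)$, gives
\[
m(\kappa_u) := \mu^\star(C_u) - \sum_{w \in \beta(u_N)} \mu^\star(C_{uw}) \ge 0,
\]
which I declare to be the atomic mass $\mu(\{\kappa_u\})$.

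The non-atomic part is prescribed by the non-increasing (hence convergent) non-negative quantities $\nu_u := \lim_{\ell\to\infty} \sum_{|v|=\ell,\, v\text{ extends }u} \mu^\star(C_v)$: dominated convergence applied to the tails shows $\nu_u = \sum_{w\in\beta(u_N)}\nu_{uw}$, so the family $(\nu_u)$ is consistent and Kolmogorov's extension theorem produces a measure of total mass $\nu_{(\hat 0)}$ on the set of infinite paths that assigns $\nu_u$ to each cylinder. A telescoping computation with $T_\ell := \sum_{|v|=\ell,\, v\text{ extends }u} \mu^\star(C_v)$, using $T_\ell - T_{\ell+1} = \sum_{|v|=\ell,\, v\text{ extends }u} m(\kappa_v)$, shows $\sum_{v \text{ extends } u} m(\kappa_v) = \mu^\star(C_u) - \nu_u$, so $\mu(C_u) = \sum_v m(\kappa_v) + \nu_u = \mu^\star(C_u)$ as required. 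The main obstacle, and the reason this construction is slightly delicate, is that when $\beta(u_N)$ is countably infinite the cylinder limits $\mu^\star(C_{uw})$ do not uniquely determine $\lim_k \sum_w \mu_{n_k}(C_{uw})$: mass may ``escape along the branch''. Fatou's lemma is precisely the tool guaranteeing that any such escaped mass reappears as legitimate non-negative atomic mass at $\kappa_u$, so that the candidate $\mu$ is always a bona fide probability measure.
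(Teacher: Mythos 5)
Your proof is correct. There is nothing in the paper to compare it against: the authors state only that ``the following result is elementary and we leave its proof to the reader,'' so no official argument is given. Your treatment of the first two assertions (the $\pi$--$\lambda$ theorem for uniqueness, and the metric $\sum_i 2^{-i}|\mu(C_i)-\nu(C_i)|$ for metrizability) is the standard one. For compactness you correctly identify the one genuinely non-elementary point, namely that when $\beta(u_N)$ is countably infinite the limits $\mu^\star(C_{uw})$ need not sum to $\mu^\star(C_u)$, and you resolve it the right way: Fatou gives $\sum_{w}\mu^\star(C_{uw})\le\mu^\star(C_u)$, and the very design of $\II_\infty$ -- the presence of the killed path $\kappa_u$ with $C_u=\{\kappa_u\}\sqcup\bigsqcup_{w}C_{uw}$ -- provides exactly the point at which the deficit can be deposited as an atom. (This is, incidentally, why the boundary in Theorem~\ref{T:main} consists of \emph{sub}probability kernels: the cemetery absorbs escaping mass, and your proof makes that mechanism explicit.) The remaining steps all check out: $T_\ell$ is non-increasing by the identity $T_\ell-T_{\ell+1}=\sum_{|v|=\ell}m(\kappa_v)$, the consistency $\nu_u=\sum_w\nu_{uw}$ follows by dominated convergence with the summable dominating function $w\mapsto\mu^\star(C_{uw})$, Kolmogorov extension applies since the level sets of finite directed paths are countable discrete (hence Polish) spaces, and the resulting $\mu$ has total mass $\mu^\star(C_{\hat 0})=1$. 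One could shorten the construction of the diffuse part by invoking Ionescu--Tulcea with the conditional kernels $\nu_{uw}/\nu_u$, but your route is complete as written.
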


Consider the case of Lemma~\ref{L:probs_on_I_infty} where 
the measure $\mu$ describes the dynamics of a Markov process.  That is, 
for each $u \in \II$ there is a subprobability measure $r^u$ on $\beta(u)$ 
such that if the process is in state $u$, then the next step is with 
probability $(r^u)^v$ to $v$, and with probability $1-\sum_{v\in \beta(u)}(r^u)^v$ 
to $\diamond$.

Label $u \in \II$  
with $\downarrow$ if $u$ is {\em reachable} from $\hat 0$ (in the
classical sense of Markov chains), and with $\dagger$ otherwise. Denote by $\mathbf
J^\downarrow$ and $\mathbf J^\dagger$ the sets of vertices labeled with $\downarrow$ and
$\dagger$, respectively.

Clearly, in order to specify the distribution $\mu$ of the Markovian path 
starting from $\hat 0$ it
suffices to have the subprobability measures $r^u$ only for $u\in \mathbf J^\downarrow$.

Note that the labeling  $(\mathbf J^\downarrow, \mathbf J^\dagger)$ has the two properties
\begin{itemize}
   \item the vertex $\hat 0$ is labeled with $\downarrow$;
   \item if for some $v \ne \hat 0$ every vertex $u \in \alpha(v)$ is labeled with $\dag$,
         then $v$ is also labeled with $\dag$.
\end{itemize}
Let us now switch perspectives and start from a labeling instead of a collection 
of subprobability measures.

\begin{definition}
\label{D:admissible}
Say that a labeling of $\II$ with the symbols $\downarrow$
and $\dag$ is {\em admissible} if it satisfies the above two properties.
Write $\II^\downarrow$ (resp. $\II^\dag$) for the subset of vertices labeled
with $\downarrow$ (resp. $\dag$). 
\end{definition}

Note that if $(\II^\downarrow, \II^\dag)$ is an admissible labeling
of $\II$, $(u_n)_{n \in \bN_0}$ is a directed path in $\II$ with $u_0 = \hat 0$,
and we define a sequence $(\tilde u_n)_{n \in \bN_0}$ in $\II \cup \{\diamond\}$ by 
\[
\tilde u_n :=
\begin{cases} 
u_n, & \text{if $u_n \in \II^\downarrow$}, \\
\diamond, & \text{if $u_n \in \II^\dag$},
\end{cases}
\]
then $(\tilde u_n)_{n \in \bN_0}$ is an element of $\II_\infty$.

\begin{definition}
\label{D:compatible}
Given an admissible labeling  $(\II^\downarrow, \II^\dag)$ of $\II$,
say that a collection 
$(r^u)_{u \in \II^\downarrow}$, where $r^u$ is a subprobability measure on
$\beta(u)$ for $u \in  \II^\downarrow$, is {\em compatible} with 
the labeling if a vertex $v \in \II \backslash \{\hat 0\}$ is in $\II^\dag$ if and only if 
$\alpha(v) \cap \II^\downarrow = \emptyset$ or 
$(r^u)^v = 0$ for $u \in \alpha(v) \cap \II^\downarrow$.
\end{definition}

\begin{remark}
For an admissible labeling $(\II^\downarrow, \II^\dag)$ of $\II$ and a collection of
 subprobability measures as in Definition \ref{D:compatible}, compatibility of the
subprobability measures with the labeling is equivalent to the equality $\II^\downarrow =
\mathbf J^\downarrow$, where $\mathbf J^\downarrow$ is the set of vertices that are reachable
from $\hat 0$ under the Markovian dynamics specified by the subprobability measures.
\end{remark}

The assertions (i), (ii) and (iii) in the following lemma, with $\mathbf J^\downarrow$ and
$\mathbf J^\dagger$ instead of $\mathbf I^\downarrow$ and $\mathbf I^\dagger$, are
obvious. The proof of the lemma is then clear from the previous remark.

\begin{lemma}
\label{L:equiv_compat_and_probs}
Consider an admissible labeling of $\II$ 
with the symbols $\downarrow$ and $\dag$ and a compatible collection of 
subprobability measures
$(r^u)_{u \in \II^\downarrow}$.
\begin{itemize}
\item[(i)]
There is a unique probability measure $\mu$ on 
$(\II_\infty, \cI_\infty)$ for which the mass assigned to the set
$\{(v_n)_{n \in \bN_0} \in \II_\infty : v_k = u_k, \, 0 \le k \le n\} \in \cC_\infty$
is 
\[
\begin{cases} 
\prod_{k=0}^{n-1} (r^{u_k})^{u_{k+1}}, 
      & \quad \text{if  $u_k \in \II^\downarrow$ for $0 \le k \le n$},\\
   0, & \quad otherwise.
\end{cases}
\]
\item[(ii)]
The vertex $u$ belongs to $\II^\dag$ if and only if
$\mu\{(v_n)_{n \in \bN_0} \in \II_\infty : v_k = u_k, \, 0 \le k \le n\} = 0$
whenever $\hat 0 = u_0 \rightarrow \ldots \rightarrow u_n = u$.
\item[(iii)]
If $u \in \II^\downarrow$ and $v \in \beta(u)$, then
\[
(r^u)^v
=
\frac
{
\mu\{(v_n)_{n \in \bN_0} \in \II_\infty : v_k = u_k, \, 0 \le k \le n+1\}
}
{
\mu\{(v_n)_{n \in \bN_0} \in \II_\infty : v_k = u_k, \, 0 \le k \le n\}
}
\]
for any choice of 
$\hat 0 = u_0 \rightarrow \ldots \rightarrow u_n = u \rightarrow u_{n+1} = v$
such that the denominator is positive.  In particular, it is possible
to recover the labeling and the collection $(r^u)_{u \in \II^\downarrow}$ from 
the probability measure $\mu$.
\end{itemize}
\end{lemma}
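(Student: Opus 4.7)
The plan is to treat Lemma~\ref{L:equiv_compat_and_probs} as a routine construction of a Markov chain law from a collection of (sub)transition kernels, with the ``defect'' mass at each vertex in $\II^\downarrow$ interpreted as a killing probability sending the particle to $\diamond$.

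For (i), I would first enlarge each subprobability measure $r^u$, $u \in \II^\downarrow$, to a probability kernel on $\beta(u) \cup \{\diamond\}$ by assigning the missing mass $1 - \sum_{v \in \beta(u)} (r^u)^v$ to $\diamond$, and then declare $\diamond$ absorbing; for $u \in \II^\dag$ I would simply kill immediately. Ionescu--Tulcea then yields a unique probability measure on $(\II \cup \{\diamond\})^{\bN_0}$, and its push-forward under the obvious identification with $\II_\infty$ produces the candidate $\mu$. Unfolding the construction, the mass assigned to the cylinder $\{v_k = u_k,\ 0 \le k \le n\} \in \cC_\infty$ is $\prod_{k=0}^{n-1} (r^{u_k})^{u_{k+1}}$ when all $u_k \in \II^\downarrow$, and is $0$ otherwise: in the remaining case some $u_k$ lies in $\II^\dag$, and since $u_0 = \hat 0 \in \II^\downarrow$ by admissibility, the first such $k$ is positive, at which point compatibility (applied to $v = u_k$ with predecessor $u_{k-1} \in \II^\downarrow$) forces $(r^{u_{k-1}})^{u_k} = 0$ and the product already vanishes. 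Uniqueness is immediate from Lemma~\ref{L:probs_on_I_infty}, since $\cC_\infty$ is a $\pi$-system generating $\cI_\infty$.

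For (ii), the implication $u \in \II^\dag \Rightarrow$ every cylinder reaching $u$ has mass $0$ is already contained in the preceding analysis. Conversely, if $u \in \II^\downarrow$, I would construct a directed path from $\hat 0$ to $u$ along which every transition factor is strictly positive, by backward induction. Indeed, by the contrapositive of the compatibility condition, any $v \in \II^\downarrow$ with $v \ne \hat 0$ admits some $u' \in \alpha(v) \cap \II^\downarrow$ with $(r^{u'})^v > 0$; iterating backwards, and using the hypothesis that only finitely many directed paths from $\hat 0$ reach any given vertex (so that the backward chain must terminate at $\hat 0$), produces the required path. Its cylinder mass under (i) is a product of positive factors. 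Part (iii) is then a direct calculation: the ratio of the two cylinder masses telescopes to leave only $(r^{u_n})^{v}$. The final recovery assertion follows by combining (ii), which reconstructs the labeling from the null-sets of $\mu$, with (iii), which, for each $u \in \II^\downarrow$, reconstructs $(r^u)^v$ via any one positive-mass path to $u$ whose existence is guaranteed by (ii).

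The main obstacle, such as it is, lies in the backward-induction step in the converse of (ii): one has to resist the temptation to control every predecessor of a given vertex and instead exhibit only a single chain of $\II^\downarrow$-predecessors back to $\hat 0$. Once this is observed, the compatibility condition is tailored precisely to supply the existence of a positive-weight predecessor at each stage, and everything else is bookkeeping with the cylinder formula from (i).
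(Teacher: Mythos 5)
Your proof is correct and amounts to the same argument the paper has in mind: the paper disposes of the lemma in one line by noting that the assertions are obvious for the reachability labeling $\mathbf J^\downarrow$ and that compatibility forces $\II^\downarrow = \mathbf J^\downarrow$, and your Ionescu--Tulcea construction together with the backward induction producing a positive-weight path from $\hat 0$ to each $u \in \II^\downarrow$ is precisely the detailed content of that remark. The only detail worth adding is that in (iii) the case $v \in \II^\dag$ needs a word: there the numerator vanishes by the definition of $\mu$ while $(r^u)^v = 0$ by compatibility, so the stated identity still holds.
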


\begin{theorem}
\label{T:main}
Suppose that Hypothesis~\ref{H:convergence_equivalent_ratios} holds.
Denote by $\cR_\infty$ the set of pairs $((\II^\downarrow, \II^\dag), (r^u)_{u \in \II^\downarrow})$,
such that $(\II^\downarrow, \II^\dag)$ is an admissible labeling of $\II$ and
$(r^u)_{u \in \II^\downarrow} \in \prod_{u \in \II^\downarrow} \cS^u$ is a compatible
collection of subprobability measures.
\begin{itemize}
\item[(i)]
If a sequence $(y_n)_{n \in \bN_0}$ in $\SSS$
converges to a point in the Doob-Martin boundary $\partial \SSS = \bar \SSS \backslash \SSS$, then
there exists $((\II^\downarrow, \II^\dag), (r^u)_{u \in \II^\downarrow}) \in \cR_\infty$ satisfying
\begin{equation}
\label{conv_ratio_to_ru}
\lim_{n \rightarrow \infty} 
\frac{y_n^u}{|y_n^{u}|}
= r^u \in \cS^u, \quad \text{for all $u \in \II^\downarrow$}.
\end{equation}
Moreover, if two such sequences converge to the same point then the corresponding elements of $\cR_\infty$ coincide.
\item[(ii)]
Conversely, if $((\II^\downarrow, \II^\dag), (r^u)_{u \in \II^\downarrow}) \in \cR_\infty$, then
there is a sequence $(y_n)_{n \in \bN_0}$ in $\SSS$ that
converges to a point in the Doob-Martin boundary $\partial \SSS = \bar \SSS \backslash \SSS$
and satisfies \eqref{conv_ratio_to_ru}.  Moreover, any two such sequences converge to the same point,
establishing a bijection between $\cR_\infty$ and $\partial \SSS$.
\item[(iii)]
For $x \in \SSS$ and $((\II^\downarrow, \II^\dag), (r^u)_{u \in \II^\downarrow}) \in \cR_\infty \cong \partial \SSS$, 
the value of the extended Martin kernel is
\[
\begin{cases}
\prod_{u \in \II^\downarrow} K^u(x^u, r^u),& \; \text{if $x^v = (0,0,\ldots)$
for all $v \notin \II^\downarrow$}, \\
0,& \; \text{otherwise.}
\end{cases}
\]
\item[(iv)]
Let $\cP_\infty$ be the set of
probability measures on $\II_\infty$
constructed from elements of $\cR_\infty$
via the bijection of Lemma~\ref{L:equiv_compat_and_probs}. 
Equip $\cP_\infty$ with the trace of the metrizable topology introduced in Lemma~\ref{L:probs_on_I_infty}.
The composition of the bijection between $\cP_\infty$ and $\cR_\infty$  and the bijection
between $\cR_\infty$ and $\partial \SSS$ is a homeomorphism between $\cP_\infty$ and $\partial \SSS$.
\end{itemize}
\end{theorem}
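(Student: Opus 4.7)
My plan is to tackle parts (i) and (iii) together, exploiting that the Doob-Martin compactification is characterized by convergence of the Martin kernel $K(x,\cdot)$ for each $x \in \SSS$. Given $y_n \to y_\infty \in \partial \SSS$, I use the countability of $\II$ and compactness of each $\bar \SSS^u$ to extract, by a diagonal argument, a subsequence $(n_k)$ along which $y_{n_k}^u$ converges in $\bar \SSS^u$ for every $u \in \II$. The strictly increasing paths of $X$ force $|y_n^u|$ to be nondecreasing, so by Hypothesis~\ref{H:convergence_equivalent_ratios}(i)--(iii) each $u$ is of one of two types: either $y_{n_k}^u$ stabilizes at some $y_\infty^u \in \SSS^u$, or $|y_{n_k}^u| \to \infty$ and $y_{n_k}^u/|y_{n_k}^u|$ converges vaguely to some $r^u \in \cS^u$. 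I then inductively define $(\II^\downarrow, \II^\dag)$ from the root by putting $\hat 0 \in \II^\downarrow$ and $v \in \II^\downarrow$ iff there is $u \in \alpha(v) \cap \II^\downarrow$ with $(r^u)^v > 0$; this makes $((\II^\downarrow, \II^\dag), (r^u)_{u \in \II^\downarrow})$ admissible and compatible by construction. Proposition~\ref{P:component_convergence} gives Doob-Martin convergence along $(n_k)$ to a boundary point whose Martin kernel is the asserted product, and uniqueness of $y_\infty$ forces this data to be invariant across all extracting subsequences, so $y_n^u / |y_n^u| \to r^u$ along the full sequence for $u \in \II^\downarrow$.

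The "$0$ otherwise" part of (iii) is the delicate case. Given $x \in \SSS$ with $x^v \ne 0$ for some $v \in \II^\dag$, I trace backwards along a chain of predecessors of $v$ with nonzero $x$-values — such a chain exists because consistency~\eqref{E:consistency_condition} requires outflow to match inflow at every vertex, so the nonzero values must propagate upstream until the chain reaches $\hat 0 \in \II^\downarrow$. Hence the chain crosses $\II^\downarrow \to \II^\dag$ at some edge $u_* \to v_*$, and iterating consistency along the $\II^\dag$-segment forces $(x^{u_*})^{v_*} \ge 2$; by compatibility, $(r^{u_*})^{v_*} = 0$, so Hypothesis~\ref{H:convergence_equivalent_ratios}(vi) kills the factor $K^{u_*}(x^{u_*}, r^{u_*})$ and the product in Corollary~\ref{C:Martin_kernel_is_product} vanishes in the limit. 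For (ii), given an element of $\cR_\infty$, I use Hypothesis~\ref{H:convergence_equivalent_ratios}(vii) to select routing instructions $\sigma^u \in \Sigma^u$ for each $u \in \II^\downarrow$ with $\sigma^u(n)/n \to r^u$ and arrange that only finitely many particles are ever routed from $u$ to any $v \in \II^\dag \cap \beta(u)$; the remaining routing instructions are immaterial. Feeding this into the trickle-down construction and inducting on $\II^\downarrow$ — using that inflow at $u \in \II^\downarrow$ from a parent $u' \in \II^\downarrow$ grows like $(r^{u'})^u |y_n^{u'}|$ — yields $|y_n^u| \to \infty$ for $u \in \II^\downarrow$ and stabilization for $u \in \II^\dag$, and Proposition~\ref{P:component_convergence} then gives Doob-Martin convergence to the claimed boundary point.

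For (iv), both $\cP_\infty$ and $\partial \SSS$ are compact metrizable, and with the set-theoretic bijection already in hand it suffices to check continuity in one direction. The Doob-Martin topology on $\partial \SSS$ is generated by the functions $K(x, \cdot)$ for $x \in \SSS$; the formula in (iii) writes this as a finite product of kernel values $K^u(x^u, r^u)$ that depend only on the finite-dimensional marginals of the corresponding $\mu \in \cP_\infty$ via Lemma~\ref{L:equiv_compat_and_probs}(iii), and these marginals are precisely what is tested by the topology of Lemma~\ref{L:probs_on_I_infty}; continuity in at least one direction between compacta then delivers a homeomorphism. The main obstacle throughout is the uniqueness step in the first paragraph: to separate two distinct subsequential limit values of $r^u$ at a vertex $u \in \II^\downarrow$, I must exhibit an $x \in \SSS$ with $x^u$ ranging over the separating family $\RR^u$ of Hypothesis~\ref{H:convergence_equivalent_ratios}(v) and with every upstream factor $K^{u'}(x^{u'}, r^{u'})$ nonzero. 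This forces a recursive construction that threads a path from $\hat 0$ to $u$ through $\II^\downarrow$ and iteratively invokes the positive-mass clause of Hypothesis~\ref{H:convergence_equivalent_ratios}(vi) to build nonvanishing upstream values of $x$, and the same recursive construction also underpins the consistency-chain argument used for the "$0$ otherwise" case of (iii).
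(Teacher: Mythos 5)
Your strategy rests on the same two pillars as the paper's proof: the factorization of the Martin kernel over vertices (Corollary~\ref{C:Martin_kernel_is_product}) and the recursive construction of test states $x$ that thread a directed path from $\hat 0$ through $\II^\downarrow$, using the separating family $\RR^u$ of Hypothesis~\ref{H:convergence_equivalent_ratios}(v) together with the positive-kernel clause of Hypothesis~\ref{H:convergence_equivalent_ratios}(vi) to keep every upstream factor nonzero. The organization of part (i) differs: the paper runs this recursion directly on the full sequence $(y_n)$, showing vertex by vertex that $K^u(\xi,y_n^u)$ converges and hence, via Hypothesis (ii), that the ratios converge, whereas you first extract a diagonal subsequence by compactness of each $\overline{\SSS^u}$ and then argue that the limiting data is subsequence-independent. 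The detour is harmless but buys nothing: the separation step you defer to your final paragraph \emph{is} the paper's recursion, so you may as well apply it to the full sequence from the outset. Your part (ii) matches the paper's (Hypothesis (vii) plus the trickle-down construction), except that your extra requirement that only finitely many particles ever be routed from $u$ to a successor in $\II^\dag$ is not something Hypothesis (vii) supplies --- a routing instruction can have $(\sigma_n^u)^v/n \to 0$ while $(\sigma_n^u)^v \to \infty$ --- and it is also not needed, since one simply reruns the part (i) argument on the constructed sequence. Your argument for (iv), a continuous bijection between compact metrizable spaces, is a reasonable way to supply the proof the paper omits.

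One step deserves caution: in the ``$0$ otherwise'' case of (iii) you claim that tracing the nonzero $x$-values upstream and ``iterating consistency along the $\II^\dag$-segment'' forces $(x^{u_*})^{v_*} \ge 2$ at the crossing edge. This is correct when every vertex has a unique predecessor --- so in all the tree examples, where \eqref{E:consistency_condition} makes the count on the single incoming edge equal to the outflow plus one --- but in a general directed acyclic graph the inflow of at least two particles to a vertex of $\II^\dag$ can be split among several predecessors, each incoming edge carrying a single particle, in which case Hypothesis~\ref{H:convergence_equivalent_ratios}(vi) does not force any individual factor to vanish. The paper's own treatment is equally terse at this point, so this is not a defect peculiar to your argument, but a fully general proof of the vanishing claim would need an additional argument there.
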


\begin{proof}
Consider part (i).
Suppose that
the sequence $(y_n)_{n \in \bN_0}$ 
converges to a point in $\partial \SSS$; that is, 
\begin{equation}
\label{E:assume_convergence}
\lim_{n \rightarrow \infty} K(x,y_n) \; \text{exists for all $x \in \SSS$}
\end{equation}
and no subsequence converges 
in the discrete topology on $\SSS$
to a point of $\SSS$. Thus,
\begin{equation}
\label{E:no_finite_convergence}
\#\{n \in \bN_0 : y_n = y\} < \infty \; \text{for any $y \in \SSS$}.
\end{equation}

Because of \eqref{E:no_finite_convergence} and 
Hypothesis~\ref{H:convergence_equivalent_ratios}(i), it follows that
\begin{equation}
\label{E:infinitely_many_particles}
\lim_{n \rightarrow \infty} |y_n^{\hat 0}| = \infty.
\end{equation}

Consider $\eta \in \RR^{\hat 0}$.   Define
$x \in \SSS$ by setting
$x^{\hat 0} = \eta$.  
By the consistency
condition \eqref{E:consistency_condition}, this
completely specifies $x$.  Note that $x^w = 0$ if $w \ne \hat 0$.
By Corollary~\ref{C:Martin_kernel_is_product},  
\[
K(x,y_n) 
=
K^{\hat 0}(\eta, y_n^{\hat 0}),
\]
and so $\lim_{n \rightarrow \infty} K^{\hat 0}(\eta, y_n^{\hat 0})$
exists.  Since this is true for all $\eta \in \RR^{\hat 0}$, it follows
from Hypothesis~\ref{H:convergence_equivalent_ratios}(v) 
that $\lim_{n \rightarrow \infty} K^{\hat 0}(\xi, y_n^{\hat 0})$
exists for all $\xi \in \SSS^{\hat 0}$.  Hence, by
Hypothesis~\ref{H:convergence_equivalent_ratios}(ii)
\[
\lim_{n \rightarrow \infty} 
\frac{(y_n^{\hat 0})^v}{|y_n^{\hat 0}|}
\]
exists for all $v \in \beta(\hat 0)$.  Write 
$r^{\hat 0} = ((r^{\hat 0})^v)_{v \in \beta(\hat 0)} \in \cS^{\hat 0}$ 
for the subprobability vector defined by the limits.  

If $(r^{\hat 0})^v = 0$ for some $v \in \beta(\hat 0)$, then,
from Hypothesis~\ref{H:convergence_equivalent_ratios}(vi), 
$\lim_{n \rightarrow \infty} K(x,y_n) = 0$ for any $x \in \SSS$
with $(x^{\hat 0})^v \ge 2$ -- no matter what the values of $y_n^u$
are for $u > \hat 0$.  Consequently, in order to understand
what further constraints are placed on the sequence 
$(y_n)_{n \in \bN_0}$ by the assumption that
\eqref{E:assume_convergence} holds, we need only consider
choices of $x \in \SSS$ with the property that
$(x^{\hat 0})^v = \{0,1\}$ for all $v \in \beta(\hat 0)$
such that $(r^{\hat 0})^v = 0$.  Note from the consistency condition
\eqref{E:consistency_condition} that for this restricted class
of $x$ we must have $x^w = 0$ for all $w \in \II$ such that all
directed path from $\hat 0$ to $w$ necessarily passes through 
$v \in \beta(\hat 0)$ with $(r^{\hat 0})^v = 0$.

Suppose that $r^{\hat 0} \ne 0$.
Fix a vertex $u \in  \beta(\hat 0)$ such that $(r^{\hat 0})^u > 0$
and  $\eta \in \RR^u$. 
From Hypothesis~\ref{H:convergence_equivalent_ratios}(vi), there
exists $\theta \in \SSS^{\hat 0}$ such that $\theta^u = |\eta| + 1$,
and $\theta^w \in \{0,1\}$ for $w \ne u$.
Define $x \in \SSS$ by setting
$x^{\hat 0} = \theta$ and $x^u = \eta$.  By the consistency
condition \eqref{E:consistency_condition}, this
completely specifies $x$.  Note that $x^w = 0$ if $w \notin \{\hat 0, u\}$.
By Corollary~\ref{C:Martin_kernel_is_product}, 
\[
K(x,y_n) 
=
K^{\hat 0}(\theta, y_n^{\hat 0}) K^u(\eta, y_n^u),
\]
and, by the choice of $\theta$,
$K^{\hat 0}(\theta, y_n^{\hat 0})$ converges to a non-zero value as 
$n \rightarrow \infty$.  Therefore, 
$\lim_{n \rightarrow \infty} K^u(\eta, y_n^u)$
exists.  Since this is true for all $\eta \in \RR^u$, it follows
from Hypothesis~\ref{H:convergence_equivalent_ratios}(v) 
that $\lim_{n \rightarrow \infty} K^u(\xi, y_n^u)$
exists for all $\xi \in \SSS^u$.  Hence, by
Hypothesis~\ref{H:convergence_equivalent_ratios}(ii),
\[
\lim_{n \rightarrow \infty} 
\frac{(y_n^u)^v}{|y_n^u|}
\]
exists for all $v \in \beta(u)$.  Write 
$r^u \in \cS^u$ for the resulting subprobability measure.

Continuing in this way, we see that,
under the assumption \eqref{E:no_finite_convergence}, 
if \eqref{E:assume_convergence} holds then 
there is a labeling of $\II$ with the symbols $\downarrow$
and $\dag$ such that the following are true:
\begin{itemize}
\item
the vertex $\hat 0$ is in $\II^\downarrow$;
\item
if a vertex $u$ is in $\II^\downarrow$, then
the limiting subprobability measure
\[
\lim_{n \rightarrow \infty} 
\frac{y_n^u}{|y_n^{u}|}
=: r^u \in \cS^u
\]
exists;
\item
a vertex $v \ne \hat 0$ belongs to $\II^\dag$ if and only if
every vertex $u \in \alpha(v)$ belongs to $\II^\dag$ or
$(r^u)^v = 0$ for every vertex $u \in \alpha(v) \cap \II^\downarrow$.
\end{itemize}
Thus, the labeling $(\II^\downarrow, \II^\dag)$ is admissible
and the collection $(r^u)_{u \in \II^\downarrow} \in \prod_{u \in \II^\downarrow} \cS^u$
are compatible, so $(I^\downarrow, I^\dag), (r^u)_{u \in \II^\downarrow})$ is 
an element of $\cR_\infty$.

Suppose that
$(y_n)_{n \in \bN_0}$ and $(z_n)_{n \in \bN_0}$
are two sequences from $\SSS$
that converge to the same point in $\partial \SSS$. Then,
$|y_n^{\hat 0}| \to \infty$ and $|z_n^{\hat 0}| \to \infty$
as $n \to \infty$, 
\[
\lim_{n \rightarrow \infty} K(x,y_n) \text{ exists for all } x \in \SSS,
\]
\[
\lim_{n \rightarrow \infty} K(x,z_n) \text{ exists for all } x \in \SSS,
\]
and
\[
\lim_{n \rightarrow \infty} K(x,z_n) = \lim_{n \rightarrow \infty} K(x,z_n)
\text{ for all } x \in \SSS.
\]
It is clear that the vertices of $\II$ that are labeled with the symbol $\downarrow$ (resp. $\dag$)
for the sequence $(y_n)_{n \in \bN_0}$ must coincide with
the vertices of $\II$ that are labeled with the symbol $\downarrow$ (resp. $\dag$)
for the sequence $(z_n)_{n \in \bN_0}$, and 
\[
\lim_{n \rightarrow \infty} 
\frac{y_n^{u}}{|y_n^{u}|}
=
\lim_{n \rightarrow \infty} 
\frac{z_n^{u}}{|z_n^{u}|}
\]
for the common set of vertices
$u \in \II$ labeled with $\downarrow$. 
This completes the proof of part (i).

Moreover, it follows from what we have just done that
if $x \in \SSS$ and the convergent sequence $(y_n)_{n \in \bN_0}$ 
is associated with $(\II^\downarrow, \II^\dag), (r^u)_{u \in \II^\downarrow})$, then 
\begin{equation}
\label{E:extended_Martin_kernel}
\lim_{n \rightarrow \infty} K(x,y_n) 
=
\begin{cases}
\prod_{u \in \II^\downarrow} K^u(x^u, r^u),& \; \text{if $x^v = (0,0,\ldots)$
for all $v \notin \II^\downarrow$}, \\
0,& \; \text{otherwise.}
\end{cases}
\end{equation}
This establishes part (iii) once we show part (ii).

Now consider part (ii).  Fix $(I^\downarrow, I^\dag), (r^u)_{u \in \II^\downarrow}) \in \cR_\infty$.
By Hypothesis~\ref{H:convergence_equivalent_ratios}(vii), for each
$u \in \II^\downarrow$ there
is a sequence $(\sigma_n^u)_{n \in \bN_0} \in \Sigma^u$ such that
\[
\lim_{n \rightarrow \infty} 
\frac{\sigma_n^u}{|\sigma_n^u|} 
= 
r^u.
\]
Choose sequences $(\sigma_n^u)_{n \in \bN_0} \in \Sigma^u$ for
$u \notin \II^\downarrow$ arbitrarily and set 
$\sigma = (\sigma^u)_{u \in \II} \in \Sigma$.  Define a sequence
$(y_n)_{n \in \bN_0}$ from $\SSS$ by setting
$y_n^u = \sigma^u(a_n^u(\sigma))$ for $n \in \bN_0$ and $u \in \II$.
It is clear from the arguments for part (i) that $(y_n)_{n \in \bN_0}$ converges to a point
in $\partial \SSS$ and \eqref{conv_ratio_to_ru} holds.  Moreover, it follows from the same
arguments  that any two convergent sequences satisfying \eqref{conv_ratio_to_ru}
must converge to the same point.  This establishes (ii).

The proof of (iv) is straightforward and we omit it.
\end{proof}

\section{Binary search tree and digital search tree processes}
\label{S:BST_and_DST}

Recall the binary search tree (BST) process from the Introduction.
We observed in Example~\ref{Ex:BST_ratios} that
Hypothesis~\ref{H:convergence_equivalent_ratios} holds for
the BST process.  Recall from Example~\ref{E:dag_tree} that we can identify
$\SSS$ in this case with the set of finite subtrees of the
complete binary tree $\{0,1\}^\star$ that contain the root
$\emptyset$.  Moreover, it follows
from the discussion in Section~\ref{S:general_trickle}
that $\partial \SSS$ is homeomorphic to the set of
probability measures on $\{0,1\}^\infty$ equipped with the weak topology
corresponding to the usual product topology on $\{0,1\}^\infty$.

We therefore abuse notation slightly and take $\SSS$ to be set
of finite subtrees of $\{0,1\}^\star$ rooted at $\emptyset$ and 
take $\partial \SSS$ to be the probability measures on $\{0,1\}^\infty$.

With this identification the 
partial order $\preceq$ on $\SSS$ is just subset containment and the
Martin kernel is given by
\begin{equation}\label{eq:BSTkernel}
K(\sss, \ttt) = 
\begin{cases}
\binom{\# \ttt}{\# \sss}^{-1} \prod_{u \in \sss} \# \ttt(u),& \quad \text{if $\sss \subseteq \ttt$}, \\
0,& \quad \text{otherwise},
\end{cases}
\end{equation}
where we recall from Example~\ref{E:BST} that $\#\ttt(u) = \#\{v \in \ttt : u \le v\}$.

A sequence $(\ttt_n)_{n \in \bN}$ in $\SSS$ with
$\# \ttt_n \rightarrow \infty$ converges in the
Doob-Martin compactification of $\SSS$ if and only if 
$\# \ttt_n(u) / \# \ttt_n$ converges for all $u \in \{0,1\}^\star$.
Moreover, if the sequence converges, then the limit can be identified with
the probability measure $\mu$ on $\{0,1\}^\infty$ such that
\[
\mu\{v\in \{0,1\}^\infty :  u < v\} = \lim_{n \rightarrow \infty} \frac{\# \ttt_n(u)}{ \# \ttt_n}
\]
for all $u \in \{0,1\}^\star$.  

Recall that the partial order on $\{0,1\}^\star$ is such that if 
$u= u_1 \ldots u_k$ and $v=v_1 \ldots,v_\ell$ are two words, then
$u \le v$ if and only if $u$ is an initial segment of $v$, 
that is, if and only if
$k \le \ell$ and $u_i=v_i$
for $i=1,\ldots,k$.  Extend this partial order to 
$\{0,1\}^\star \sqcup \{0,1\}^\infty$ by declaring that any two
elements of $\{0,1\}^\infty$ are not comparable and $u < v$ for
$u= u_1 \ldots u_k \in \{0,1\}^\star$ and 
$v=v_1 v_2 \ldots \in \{0,1\}^\infty$ when
$u_i=v_i$ for $i=1,\ldots,k$. Given $\mu \in \partial \SSS$, 
set 
\begin{equation}\label{eq:convention}
   \mu_u := \mu\{v\in \{0,1\}^\infty :  u < v\}.
\end{equation}
That is, $\mu_u$ is the mass assigned by $\mu$
to the set of infinite paths in the complete binary tree
that begin at the root and that pass through the vertex $u$.
The extended Martin kernel is given by
\begin{equation}
\label{eq:BSTkernel_extended}
K(\sss,\mu) 
= 
(\#\sss)!  \prod_{u\in \sss} \mu_u,
\quad \sss \in \SSS, \, \mu \in \partial \SSS.
\end{equation}

Note from the construction of the BST
process that its transition matrix is
\[
P(\sss, \ttt) 
= 
\begin{cases}
\frac{1}{\#\sss + 1}, \quad \text{if $\sss \subset \ttt$ and $\# (\ttt \setminus \sss) = 1$},\\
0,\quad  \text{otherwise},
\end{cases}
\]
(this is also apparent from \eqref{E:transition_probs_BST}). 
Set $h_\mu := K(\cdot, \mu)$ for $\mu \in \partial \SSS$. 
The Doob $h$-transform process corresponding to the regular function $h_\mu$
has state space 
\[
\{\ttt \in \SSS :  \text{$\mu_u > 0$ for all $u \in \ttt$}\}
\]
and transition matrix
\[
P^{(h_\mu)}(\sss, \ttt) 
= 
\begin{cases}
\mu_u, \quad \text{if $\ttt = \sss \sqcup \{u\}$},\\
0, \quad  \text{otherwise}.
\end{cases}
\]

It follows that the $h$-transformed process results from a trickle-down
construction.  For simplicity, we only verify this in
the case when $\mu_u > 0$ for all $u \in \{0,1\}^\star = \II$,
so that the state-space of the $h$-transformed process is all
of $\SSS$, and leave the formulation of
the general case to the reader.  The routing chain on 
$\SSS^u = \bN_0^{\{u0,u1\}}$ has transition matrix $Q^u$ given by
\[
Q^u((m,n),(m+1,n)) = \frac{\mu_{u0}}{\mu_u}
\]
and
\[
Q^u((m,n),(m,n+1)) = \frac{\mu_{u1}}{\mu_u}.
\]
In other words, we can regard the routing chain as the space-time chain
corresponding to the one-dimensional simple random walk that has probability
$\mu_{u0}/\mu_u$ of making a $-1$ step and probability $\mu_{u1}/\mu_u$ of 
making a $+1$ step.  

We have the following ``trickle-up'' construction of the $h$-transformed process.
Suppose on some probability space that there is a sequence of independent
identically distributed $\{0,1\}^\infty$-valued random variables 
$(V^n)_{n \in \bN}$ with common distribution $\mu$.  For an
initial finite rooted subtree $\www$ in the state space
of the $h$-transformed process, define a sequence
$(W_n)_{n \in \bN_0}$ of random finite subsets of $\{0,1\}^\star$ inductively
by setting $W_0 := \www$ and 
$W_{n+1} := W_n \cup \{V_1^{n+1} \ldots V_{H(n+1)+1}^{n+1}\}$, $n \ge 0$,
where 
$H(n+1) := \max\{l \in \bN : V_1^{n+1} \ldots V_l^{n+1} \in W_n\}$
with the convention $\max \emptyset = 0$.
That is, at each point in time
we start a particle at a ``leaf'' of the complete binary tree $\{0,1\}^\star$
picked according to $\mu$ and then let that particle trickle up the tree
until it can go no further because its path is blocked by previous particles
that have come to rest.  It is clear that $(W_n)_{n \in \bN_0}$ is
a  Markov chain with state space the appropriate set
of finite rooted subtrees of $\{0,1\}^\star$, 
initial state $\www$, and transition matrix
$P^{(h_\mu)}$.

It follows from the trickle-up construction and Kolmogorov's zero-one law
that the tail $\sigma$-field of the $h$-transformed process is trivial,
and hence $\mu$ is an extremal point of $\bar \SSS$.  Alternatively,
$\mu$ is extremal because
it is clear from the strong law of large numbers that the
$h$-transformed process converges to $\mu$.

Consider the special case of the $h$-transform construction 
when the boundary point
$\mu$ is the ``uniform'' or ``fair coin-tossing'' measure on $\{0,1\}^\infty$;
that is, $\mu$ is the infinite product of copies of the measure on $\{0,1\}$ that
assigns mass $\frac{1}{2}$ to each of the subsets $\{0\}$ and $\{1\}$.
In this case, the transition matrix of the $h$-transformed process is
\[
P^{(h_\mu)}(\sss, \ttt) 
= 
\begin{cases}
2^{-|u|}, \quad \text{if $\ttt = \sss \sqcup \{u\}$},\\
0, \quad  \text{otherwise},
\end{cases}
\]
where we write $|u|$ for the length of the word $u$; that is,
$|u| = k$ when $u=u_1 \ldots u_k$.  This transition mechanism is
that of the {\em digital search tree (DST) process}.  We have therefore
established the following result.

\begin{theorem} 
\label{T:DST_is_h-transform}
The digital search tree process is the Doob $h$-transform of the
binary search tree process associated with the regular function
$h(\sss) := (\#\sss)!  \prod_{u\in \sss} 2^{-|u|}$, $\sss \in \SSS$.
The regular function $h$ is extremal and corresponds to the
uniform probability measure on $\{0,1\}^\infty$ thought of
as an element of the Doob-Martin compactification of the state
space $\SSS$ of the BST process.  Consequently, the
Doob-Martin compactification of the DST process coincides with
that of the BST process.
\end{theorem}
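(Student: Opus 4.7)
The plan is to assemble the theorem by identifying the three assertions with computations and facts that are essentially in hand from the discussion immediately preceding its statement, and then checking that the hypotheses for appealing to the general theory are met.

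First I would identify the function $h$ with the extended Martin kernel evaluated at the fair coin-tossing measure $\mu$. Indeed, for this $\mu \in \partial\SSS$ one has $\mu_u = 2^{-|u|}$ for every $u \in \{0,1\}^\star$, so the formula \eqref{eq:BSTkernel_extended} gives
\[
K(\sss,\mu) \;=\; (\#\sss)! \prod_{u\in\sss} \mu_u \;=\; (\#\sss)! \prod_{u\in\sss} 2^{-|u|} \;=\; h(\sss).
\]
In particular, $h = h_\mu$ in the notation of the text, so $h$ is a regular function (harmonic on $\SSS$) and is strictly positive on $\SSS$.

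Next I would identify the $h$-transformed transition matrix with that of the DST process. This is simply an instance of the general computation carried out just above the theorem: for the BST process and any $\mu \in \partial\SSS$ with full support on $\{0,1\}^\star$,
\[
P^{(h_\mu)}(\sss,\ttt) = \mu_u \quad \text{if } \ttt = \sss \sqcup\{u\},
\]
and $0$ otherwise. Specializing to the uniform $\mu$ with $\mu_u = 2^{-|u|}$ recovers exactly the transition mechanism of the DST process as described in the Introduction: each time step adjoins an unoccupied vertex $u$ of $\{0,1\}^\star$ to the current tree with probability $2^{-|u|}$, which is the probability that a uniformly random infinite $0$-$1$ string has $u$ as its prefix. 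The main (but routine) obstacle here is really just making sure the state space of the $h$-transform matches the support of the DST chain, which it does since $h > 0$ everywhere.

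Finally I would dispatch extremality and the compactification statement. Extremality of $h$ was already argued in the text in two ways: via the trickle-up construction, in which the $h$-transformed chain is built from an i.i.d.\ sequence $(V^n)_{n\in\bN}$ of $\{0,1\}^\infty$-valued random variables with law $\mu$, so Kolmogorov's zero-one law forces its tail $\sigma$-field to be trivial and hence $h$ to be extremal by the criterion recalled in Section~\ref{S:Martin_general}; alternatively, the strong law of large numbers applied to the proportion of vertices in each subtree $\ttt(u)$ shows that the $h$-transformed chain converges almost surely to $\mu$, which also gives extremality. For the last clause, since $h$ is strictly positive on all of $\SSS$, the general fact recalled in Section~\ref{S:Martin_general} that the Doob-Martin compactification and its set of extreme points are unchanged under $h$-transform with a strictly positive harmonic function immediately identifies the Doob-Martin compactification of the DST process with that of the BST process.
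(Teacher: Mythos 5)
Your proposal is correct and follows essentially the same route as the paper: the theorem is established there by exactly the discussion preceding its statement, namely identifying $h$ with $K(\cdot,\mu)$ for the fair coin-tossing measure via \eqref{eq:BSTkernel_extended}, specializing the general $h$-transform transition matrix $P^{(h_\mu)}(\sss,\ttt)=\mu_u$ to $\mu_u = 2^{-|u|}$ to recover the DST dynamics, deducing extremality from the trickle-up construction together with Kolmogorov's zero-one law (or the strong law of large numbers), and invoking the invariance of the Doob-Martin compactification under $h$-transform by a strictly positive harmonic function.
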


\begin{remark}
The digital search tree (DST) algorithm is discussed
in~\cite[p.496ff]{MR0445948} and
in~\cite[Chapter~6]{MR1140708}. The process in
Theorem~\ref{T:DST_is_h-transform} appears as the
output of the DST algorithm if the input is a sequence
of independent and identically distributed random 0-1
sequences with distribution $\mu$, where $\mu$ is the
fair coin tossing measure. In the literature this
assumption is also known as the symmetric Bernoulli
model; in the general Bernoulli model the probability
$1/2$ for an individual digit~1 is replaced by an
arbitrary $p\in(0,1)$. In our approach we do not need
any assumptions on the internal structure of the random
0-1 sequences and we can work with a general
distribution $\mu$ on $\{0,1\}^\infty$. Any such DST
processes ``driven by $\mu$'' is an $h$-transform of
the BST process, provided that $\mu_u>0$ for all $u\in\II$,
and the trickle-up construction shows that the
conditional distribution of the BST process, given that
its limit is $\mu$, is the same as the distribution of
the DST process driven by $\mu$.
\end{remark}

In the symmetric Bernoulli model, the sample paths of
the DST process converge almost surely to the single
boundary point $\mu$ in the Doob-Martin topology, where
$\mu$ is the uniform measure on $\{0,1\}^\infty$.  We
now investigate the distribution of the limit of the
sample paths of the BST process.  There are several
routes we could take.

Recall that the routing chains for the BST process are essentially P\'olya urns;
that is, the routing chain
$Y^u = ((Y^u)^{u0}, (Y^u)^{u1})$ for the vertex $u \in \{0,1\}^\star$
makes the transition
$(g,d) \rightarrow (g+1,d)$ with probability $(g+1)/(g+d+2)$ and the
transition $(g,d) \rightarrow (g,d+1)$ with probability $(d+1)/(g+d+2)$.  
It is a well-known fact about the P\'olya urn that, 
when $((Y_0^u)^{u0} , (Y_0^u)^{u1}) = (0,0)$, the sequence
$((Y_n^u)^{u0} + (Y_n^u)^{u1})^{-1} ((Y_n^u)^{u0}, (Y_n^u)^{u1})$, $n\in\bN_0$,
converges almost surely to a random variable of the form $(U,1-U)$,
where $U$ is uniformly distributed on $[0,1]$.  It follows that if
we write $(T_n)_{n \in \bN}$ for the BST process, then almost surely
\[
\frac{\# T_n(u)}{\# T_n} \rightarrow \prod_{\emptyset < v \le u} U_v,
\quad u  \in \{0,1\}^\star,
\]
where the pairs $(U_{u0}, U_{u1})$, $u \in \{0,1\}^\star$, are independent,
the random variables $U_{u0}$ and $U_{u1}$ are uniformly distributed on $[0,1]$, and
$U_{u0} + U_{u1} = 1$.  Thus, the limit of the BST chain is
the random measure $M$ on $\{0,1\}^\infty$ such that
$M_u = \prod_{\emptyset < v \le u} U_v$ for all $u \in \{0,1\}^\star$.

Another approach is to observe that, from the trickle-up description of the
$h$-transformed processes described above and the extremality
of all the boundary points, we only need to find a
random measure on $\{0,1\}^\infty$ such that if we perform the
trickle-up construction from a realization of the random measure, then
we produce the BST process.  It follows from the main result of
\cite{MR0362614} that the random measure $M$ has the correct properties.

Yet another perspective is to observe that, by the 
general theory outlined in Section~\ref{S:Martin_general}, the distribution
of the limit is the unique probability measure 
$\bM$ on $\partial \SSS$
such that 
\[
1 \, =\, \int_{\partial \SSS} K(\sss, \mu) \, \bM(d\mu).
\]
In the present situation the right hand side evaluates to
\[
\int_{\partial \SSS} (\#\sss)! \, \prod_{u\in \sss} \mu_u \, \bM(d\mu) 
 \, =\, 
(\#\sss)!  \, \bE\left[\prod_{u\in \sss} \tilde M_u \right], 
\]
where $\tilde M$ is a random measure on $\{0,1\}^\infty$ with distribution $\bM$.
% \[
% \begin{split}
% 1 
% & = 
% \int_{\partial \SSS} K(\sss, \mu) \, \bM(d\mu) \\
% & =
% \int_{\partial \SSS} (\#\sss)! \, \prod_{u\in \sss} \mu_u \, \bM(d\mu) \\
% & =
% (\#\sss)!  \, \bE\left[\prod_{u\in \sss} \tilde M_u \right], \\
% \end{split}
% \]
% where $\tilde M$ is a random measure on $\{0,1\}^\infty$ with distribution $\bM$.
Rather than simply verify that taking $\tilde M = M$, where
$M_u =  \prod_{\emptyset < v \le u} U_v$ as above, has the requisite property, we
consider a more extensive class of random probability measures with similar
structure, compute the corresponding regular functions, and identify
the transition matrices of the resulting $h$-transform processes.
  
Let the pairs $(R_{u0}, R_{u1})$, $u \in \{0,1\}^\star$,  be independent
and take values in the set $\{(a,b) : a,b  \ge 0, \, a+b = 1\}$.
Define a random probability measure $N$ on $\{0,1\}^\infty$ by
setting $N_u := \prod_{\emptyset < v \le u} R_v$ for all $u \in \{0,1\}^\star$.
The corresponding regular function is
\[
\begin{split}
h(\sss)
& =
\bE\left[K(\sss, N)\right] \\
& =
(\#\sss)!  \, \bE\left[\prod_{u\in \sss} N_u \right] \\
& =
(\#\sss)! \,
\bE\left[\prod_{u\in \sss}  \prod_{\emptyset < v \le u} R_v \right] \\
& =
(\#\sss)! \, 
\bE\left[\prod_{u \in \sss \setminus \{\emptyset\}}  R_u^{\# \sss(u)} \right] \\
& =
(\#\sss)! \, \prod_u A_u(\# \sss(u0), \# \sss(u1)), \\
\end{split}
\]
where the last product is over $\{0,1\}^\star$ and 
\[
A_u(j,k) := \bE \left[ R_{u0}^j R_{u1}^k \right].
\]

With this notation, the probability that the resulting $h$-transform
of the BST process makes a transition from $\sss$ to 
$\ttt :=\sss \sqcup \{v\}$ is
\begin{equation}
\label{eq:transGen}
\begin{split}
h(\sss)^{-1} \frac{1}{\# \sss + 1} h(\ttt)
& =
\frac{1}{\#s+1} 
\frac{
(\# s +1)! \prod_u A_u(\# \ttt(u0),\# \ttt(u1))
}
{
(\# s)!  \prod_u A_u(\# \sss(u0),\# \sss(u1))
} \\
& = \prod_u 
\frac{
A_u(\# \ttt(u0),\# \ttt(u1))
}
{ 
A_u(\# \sss(u0),\# \sss(u1))
} \\
& =
\prod_{\emptyset \le u < v} 
\frac{
A_u(\# \ttt(u0),\# \ttt(u1))
}
{ 
A_u(\# \sss(u0),\# \sss(u1))
}, \\
\end{split}
\end{equation}
because $\#\sss(u)=\#\ttt(u)$ unless $u\le v$.

The ratios in \eqref{eq:transGen} have a simple form: 
if $\#\sss(u0)=j$ and  $\#\sss(u1)=k$, then 
\begin{equation}
\label{eq:totheleft}
\frac{
A_u(\# \ttt(u0),\# \ttt(u1))
}
{ 
A_u(\# \sss(u0),\# \sss(u1))
} 
= 
\frac{A_u(j+1,k)}{A_u(j,k)}, \quad\text{if $u0 \le v$},
\end{equation}
and 
\begin{equation}
\label{eq:totheright}
\frac{
A_u(\# \ttt(u0),\# \ttt(u1))
}
{ 
A_u(\# \sss(u0),\# \sss(u1))
}     
= 
\frac{A_u(j,k+1)}{A_u(j,k)}, \quad\text{if $u1 \le v$}.
\end{equation}

Suppose now that each $R_{u0}$ has a beta distribution with parameters $\theta_u$ and $\eta_u$, (so that $R_{u1} = 1 - R_{u0}$ has a beta distribution with parameters $\eta_u$ and $\theta_u$ and the pair $(R_{u0}, R_{u1})$ has a Dirichlet distribution
with parameters $\theta_u$ and $\eta_u$). 
Then,
\[
A_u(j, k) = \frac{\theta_u(\theta_u + 1) \cdots (\theta_u + j - 1)
                            \times \eta_u(\eta_u + 1) \cdots (\eta_u + k - 1)}
                        {(\theta_u+\eta_u)(\theta_u+\eta_u + 1) 
                                    \cdots (\theta_u+\eta_u + j + k - 1)},
\]
and the factors in \eqref{eq:totheleft} and \eqref{eq:totheright} are
\[
  \frac{\theta_u+ j}{\theta_u + \eta_u + j + k} \; \text{ and } \;
       \frac{\eta_u+ k}{\theta_u + \eta_u + j + k}, 
\]
respectively.  As expected, the
BST chain arises as the special case $\theta_u=\eta_u=1$ for all $u$. 

\begin{remark}
The chain with
$\theta_u=\eta_u=\ell$ for some fixed $\ell\in\bN$  appears in 
connection with the median-of-$(2\ell-1)$ version of the algorithms 
Quicksort and Quickselect (Find) -- 
see \cite{MR1707969}.
\end{remark}   

A special case of the above construction arises in connection with 
Dirichlet random measures. 
Recall that a {\em Dirichlet random measure} (sometimes called a {\em Ferguson random measure})
{\em directed by a finite measure} $\nu$ on $\{0,1\}^\infty$
is a random probability measure $\tilde N$ on 
$\{0,1\}^\infty$ with the property that, 
for any Borel partition $B_1, \ldots, B_k$ 
of $\{0,1\}^\infty$, the random vector $(\tilde N(B_1),\ldots, \tilde N(B_k))$ has a Dirichlet distribution with parameters 
$(\nu(B_1), \ldots, \nu(B_k))$. In particular, $\tilde N(B)$ has a beta distribution with parameters $\nu(B)$ and $\nu(B^{\mathrm{c}})$. 
It follows easily from Lemma~\ref{L:Dirichlet_fact} below that if 
$\theta_u = \nu_{u0}$ and $\eta_u = \nu_{u1}$ for all $u$ in the
above construction of a random probability measure using beta distributed
weights, then the result is a Dirichlet random measure directed by $\nu$.
(We note that the random measures that appear as the limit of the BST and median-of-($2\ell-1$) processes are {\bf not} Dirichlet.)

\begin{lemma}
\label{L:Dirichlet_fact}
Suppose that $(D_1, D_2, D_3, D_4)$ is a Dirichlet distributed
random vector with parameters $(\alpha_1, \alpha_2, \alpha_3, \alpha_4)$.
Then, the three pairs
\[
 \left(\frac{D_1}{D_1+D_2}, \frac{D_2}{D_1+D_2}\right),
\, 
\left(\frac{D_3}{D_3+D_4}, \frac{D_4}{D_3+D_4} \right)
\text{ and }
   (D_1+D_2, D_3+D_4)
\]
are independent Dirichlet distributed random vectors
 with respective parameters $(\alpha_1, \alpha_2)$, $(\alpha_3, \alpha_4)$,
and $(\alpha_1+\alpha_2, \alpha_3+\alpha_4)$.  
\end{lemma}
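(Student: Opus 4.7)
My plan is to use the gamma representation of the Dirichlet distribution, which reduces everything to the classical beta--gamma algebra. Recall that if $G_1,G_2,G_3,G_4$ are independent random variables with $G_i$ having the gamma distribution of shape $\alpha_i$ and unit rate, and if we set $S:=G_1+G_2+G_3+G_4$, then the vector $(G_1/S,G_2/S,G_3/S,G_4/S)$ has the Dirichlet distribution with parameters $(\alpha_1,\alpha_2,\alpha_3,\alpha_4)$. It therefore suffices to prove the statement for the particular random vector built from $G_1,\dots,G_4$ in this way.

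With this representation, the three pairs appearing in the statement become
\[
\left(\frac{G_1}{G_1+G_2},\frac{G_2}{G_1+G_2}\right),\
\left(\frac{G_3}{G_3+G_4},\frac{G_4}{G_3+G_4}\right),\
\left(\frac{G_1+G_2}{S},\frac{G_3+G_4}{S}\right).
\]
The first step is to observe that $H_1:=G_1+G_2$ and $H_2:=G_3+G_4$ are independent gammas with shapes $\alpha_1+\alpha_2$ and $\alpha_3+\alpha_4$, so $(H_1/(H_1+H_2),H_2/(H_1+H_2))$ is Dirichlet with parameters $(\alpha_1+\alpha_2,\alpha_3+\alpha_4)$; this handles the third pair. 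The first pair depends only on $(G_1,G_2)$, and the second pair depends only on $(G_3,G_4)$, so these two pairs are automatically independent of each other by the independence of the four gammas. Moreover, each is Dirichlet with the claimed parameters by the same gamma construction applied to two coordinates.

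The main (and only nontrivial) step is to establish the joint independence of all three pairs. The key input is the classical fact that for two independent gammas $G,G'$ with arbitrary shapes, the ratio $G/(G+G')$ is independent of the sum $G+G'$. Applied to $(G_1,G_2)$ this gives that $(G_1/H_1,G_2/H_1)$ is independent of $H_1$; applied to $(G_3,G_4)$ it gives that $(G_3/H_2,G_4/H_2)$ is independent of $H_2$. Combining these with the independence of $(G_1,G_2)$ and $(G_3,G_4)$, the triple consisting of the first pair, the second pair, and $(H_1,H_2)$ is jointly independent. Since the third pair in the statement is a deterministic function of $(H_1,H_2)$, it follows that all three pairs in the lemma are jointly independent.

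Putting the three ingredients together -- the gamma construction of a Dirichlet vector, the identification of each pair's marginal Dirichlet law, and the beta--gamma independence argument -- yields the lemma. I expect no real obstacle; the only thing to handle with care is to route the independence through the intermediate $\sigma$-algebra generated by $(H_1,H_2)$ so that we obtain \emph{joint} independence of all three pairs and not merely pairwise independence.
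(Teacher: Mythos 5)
Your proof is correct and takes essentially the same route as the paper's: both reduce to the representation of the Dirichlet vector via independent gamma variables and then invoke the independence of normalized ratios from sums (the beta--gamma algebra) on the two blocks $(G_1,G_2)$ and $(G_3,G_4)$ to obtain joint independence of the three pairs. Your write-up is slightly more explicit about routing the joint independence through $(H_1,H_2)$, but the underlying argument is identical.
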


\begin{proof}
Note that $(D_1, D_2, D_3, D_4)$ has the same distribution as
\[
   \left(\frac{G_1}{G_1 + \cdots + G_4},\ldots, \frac{G_4}{G_1 + \cdots + G_4}\right),
\]
where the $G_1, \ldots, G_4$ are independent and $G_i$ has the Gamma distribution with parameters 
$(\alpha_i,1)$. Moreover, the latter random vector is independent of the sum $G_1 + \cdots + G_4$.

Now,
\[
   \left(D_1+D_2, D_3+D_4,\frac{D_1}{D_1+D_2}, \frac{D_2}{D_1+D_2},
           \frac{D_3}{D_3+D_4}, \frac{D_4}{D_3+D_4} \right)
\]
has the same distribution as
\[
   \left(\frac{G_1+G_2}{G_1 + \cdots + G_4},\frac{G_3+G_4}{G_1 + \cdots + G_4},
       \frac{G_1}{G_1+G_2}, \frac{G_2}{G_1+G_2},\frac{G_3}{G_3+G_4}, \frac{G_4}{G_3+G_4}\right).
\]
By the fact above, 
\[
G_1+G_2,
\]
\[
G_3+G_4,
\] 
\[
\left(\frac{G_1}{G_1+G_2}, \frac{G_2}{G_1+G_2}\right),
\] 
and
\[
\left(\frac{G_3}{G_3+G_4}, \frac{G_4}{G_3+G_4}\right)
\] are independent, and so
\[
\left(\frac{G_1+G_2}{G_1 + \cdots + G_4},\frac{G_3+G_4}{G_1 + \cdots + G_4}\right),
\]
\[
\left(\frac{G_1}{G_1+G_2}, \frac{G_2}{G_1+G_2}\right),
\] 
and 
\[
\left(\frac{G_3}{G_3+G_4}, \frac{G_4}{G_3+G_4}\right)
\]
are independent.
\end{proof}

\section{Random recursive trees and nested Chinese restaurant processes}
\label{S:RRT_and_CRT}

\subsection{Random recursive trees from another encoding of permutations}
\label{SS:RRT_and_perms}

Recall from the Introduction how the binary search tree process
arises from a classical bijection between permutations of 
$[n] := \{1,2, \ldots, n\}$ and a suitable class of
labeled rooted trees.  The random recursive tree process
arises from a similar, but slightly less well-known,  bijection
that we now describe.

We begin with a definition similar to that of the complete binary tree in the Introduction.
Denote by $\bN^\star:=\bigsqcup_{k=0}^\infty \bN^k$ the set of finite tuples
or {\em words} drawn from the alphabet $\bN$ (with the empty word $\emptyset$
allowed).  
Write an $\ell$-tuple $(v_1, \ldots, v_\ell) \in \bN^\star$ more simply as
$v_1 \ldots v_\ell$. Define a directed graph with vertex set
$\bN^\star$ by declaring that if
$u= u_1 \ldots u_k$ and $v=v_1 \ldots v_\ell$ are two words,
then $(u,v)$ is a directed edge (that is, $u \rightarrow v$)
if and only if $\ell=k+1$ and $u_i=v_i$
for $i=1,\ldots,k$.  Call this directed graph
the {\em complete Harris-Ulam tree}.  
A {\em finite rooted Harris-Ulam tree} is a subset $\ttt$ of
$\bN^\star$ with properties:
\begin{itemize}
\item
$\emptyset \in \ttt$,
\item
if $v = u_1 \ldots u_k \in \ttt$, then
$u_1 \ldots u_j \in \ttt$ for $1 \le j \le k-1$
and $u_1 \ldots u_{k-1} m \in \ttt$ for $1 \le m \le u_k - 1$.
\end{itemize}
As in the binary case there is a canonical way to draw a
finite rooted Harris-Ulam tree in the plane, see 
Figure~\ref{fig:Harris-Ulam_tree} for an example.
Further, we can similarly define a vertex $u\in\bN^\star$ to be an 
external vertex of the tree $\ttt$ if $u\notin\ttt$ and if $\ttt\sqcup\{u\}$ 
is again a Harris-Ulam tree. Note that, in contrast to the binary case,
external vertices are now specified by their immediate predecessor;
in particular, a Harris-Ulam tree with $n$ vertices has $n$ external 
vertices.

\begin{figure}[htbp]
	\centering
		\includegraphics[width=1.00\textwidth]{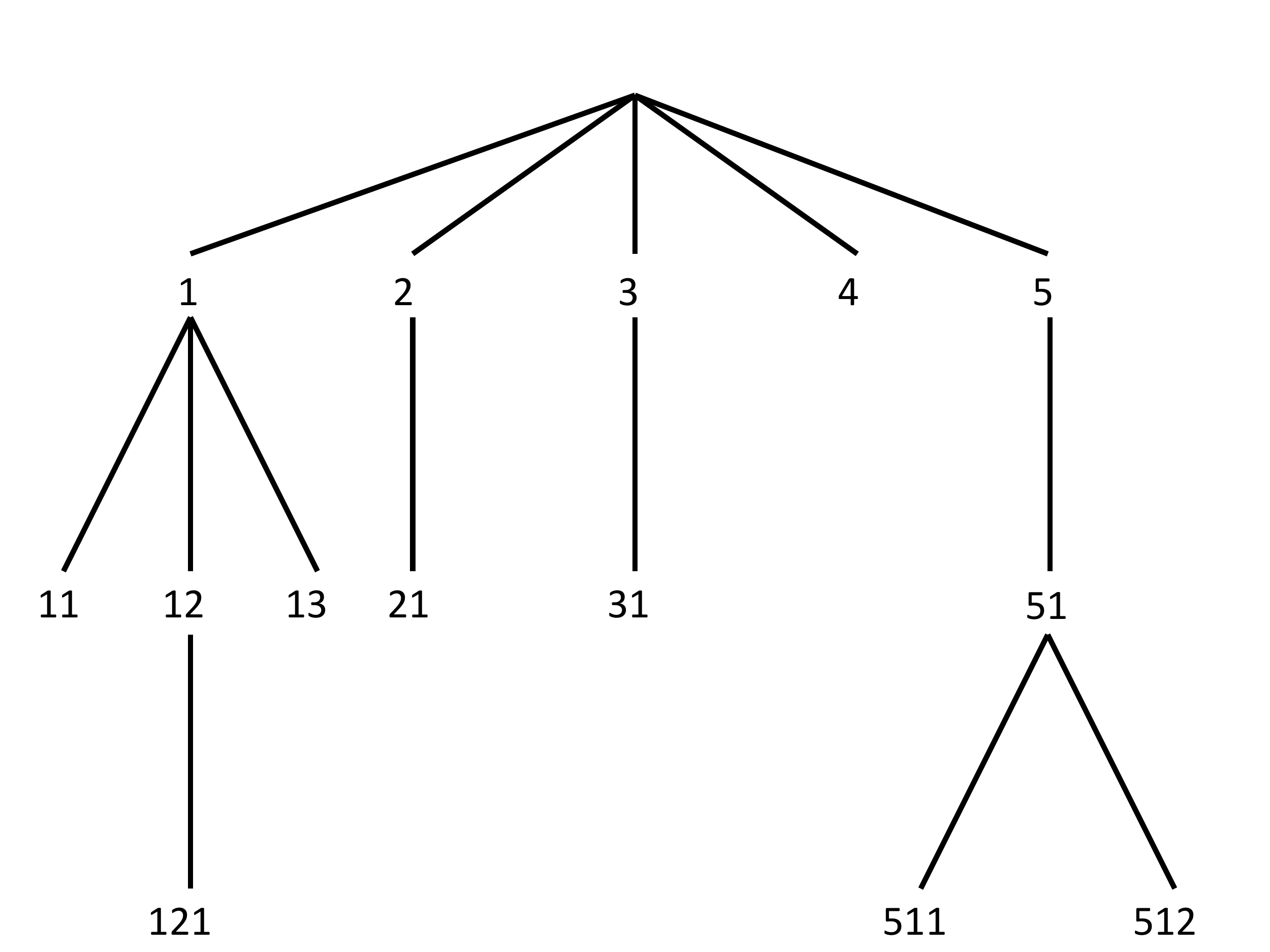}
	\caption{An example of a finite rooted Harris-Ulam tree.}
	\label{fig:Harris-Ulam_tree}
\end{figure}

Given a permutation $\pi$ of $[n]$,
set $r(i) = \pi^{-1}(i)$ for $1 \le i \le n$. Construct a
finite rooted Harris-Ulam tree with $n+1$ vertices labeled by
$[n] \cup \{0\}$ from $r(1), \ldots, r(n)$ recursively, as follows.  
Denote by $\ttt_0$ the tree consisting of just the  root $\emptyset$ labeled with $0$.
Suppose for $1 \le i \le n-1$ that a tree $\ttt_i$ with $i$ vertices labeled by $\{0,\ldots, i-1\}$
has already been defined.
Assume that $i = r(\ell)$.
If $\{ j : 1 \le j < \ell, \,  r(j) < i\} = \emptyset$, set $s := 0$.
Otherwise, set $s := r(k)$, where $k := \max\{ j : 1 \le j < \ell, \,  r(j) < i\}$.
Let $u$ be the vertex in $\ttt_i$ labeled
by $s$.  Put $q := \max\{p \in \bN : u p \in \ttt_i\} + 1$, adjoin the vertex $u q$
to $\ttt_i$ to create the tree $\ttt_{i+1}$, and label this new vertex with $i$.

For example, $1$ is always the first child of $0$ (occupying the vertex $1$
in the complete Harris-Ulam tree)
and $2$ is either the
second child of $0$ (occupying the vertex $2$ in the complete Harris-Ulam tree) or the first child
of $1$ (occupying the vertex $11$ in the complete Harris-Ulam tree), 
depending on whether $2$ appears before or after $1$ in the
list $r(1), \ldots, r(n)$.  See Figure~\ref{fig:recursive_tree} for
an instance of the construction with $n=9$.

Clearly, $\pi$ can be reconstructed from the tree and its vertex labels.

\begin{figure}[htbp]
	\centering
		\includegraphics[width=1.00\textwidth]{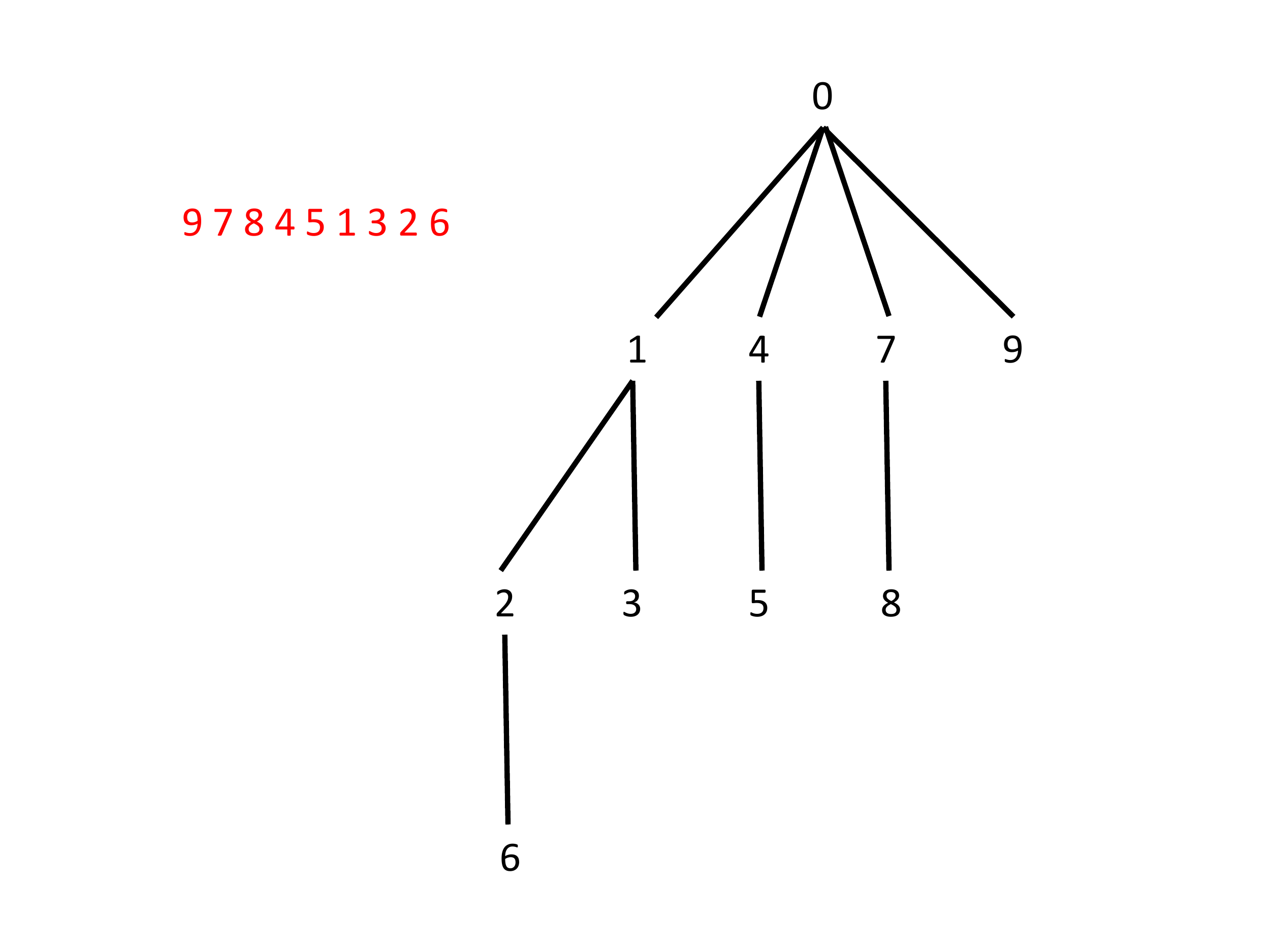}
	\caption{The labeled Harris-Ulam 
tree corresponding to the permutation of $[9]$ with 
$r(1), \ldots, r(9) = 9, 7, 8, 4, 5, 1, 3, 2, 6$.
For the sake of clarity, the 
%$\bN^\star$-valued formal definitions
%of the vertices are 
Harris-Ulam coding of the vertices as elements of $\bN^\star$ is not shown.                      
The correspondence between the labels from $[9] \cup \{0\}$ and the coding
of the vertices by elements of $\bN^\star$ is
$0 \leftrightarrow \emptyset$,
$1 \leftrightarrow 1$,
$2 \leftrightarrow 11$,
$3 \leftrightarrow 12$,
$4 \leftrightarrow 2$,
$5 \leftrightarrow 21$,
$6 \leftrightarrow 111$,
$7 \leftrightarrow 3$,
$8 \leftrightarrow 31$,
$9 \leftrightarrow 4$
.}
	\label{fig:recursive_tree}
\end{figure}

As in the Introduction, 
given a sequence $(U_n)_{n \in \bN}$ 
of independent identically distributed random variables
that are uniform on the unit interval $[0,1]$, define a random permutation
$\Pi_n$ of $[n]$ for each positive integer $n$ by setting
$\Pi_n(k) = \# \{1 \le \ell \le n : U_\ell \le U_k\}$.
Applying the bijection to $\Pi_n$, we obtain
a  random labeled rooted tree and a corresponding unlabeled rooted
tree that we again denote by $L_n$ and $T_n$, respectively.  
Both of these processes are Markov
chains with simple transition probabilities.  For example,
given $T_n$ we
pick one of its $n+1$ vertices uniformly at random and connect a new
vertex to it to form $T_{n+1}$.
Thus, $(T_n)_{n \in \bN}$ is the simplest {\em random recursive tree}
process (see, for example, \cite{smythe-mahmoud} for a survey
of such models).  

As with the BST and DST processes, 
we think of building the sequence $(T_n)_{n \in \bN}$
by first building a growing sequence of finite rooted
Harris-Ulam trees labeled with the values of the input
sequence $U_1,U_2,\ldots$ and then ignoring the
labels. The transition rule for
the richer process takes a simple form: attach a new vertex
labeled with
$U_{n+1}$ to the root if $U_{n+1}$ is smaller than each  
of the previous variables $U_1,\ldots,U_n$; if not, then attach a new vertex
labeled with $U_{n+1}$ to the existing vertex that
is the labeled with the rightmost of the smaller elements. 
In contrast to the binary search tree situation,
the labeled versions of the
trees $T_1,\ldots,T_{n-1}$ can now be determined from the
labeled version of $T_n$. However, if we remove the labels 
then we are in the same situation as in the BST case: the next 
tree is obtained by choosing an external vertex of the current tree 
uniformly at random and attaching it to the current tree.

\subsection{Chinese restaurant processes}
\label{SS:CRPs}

Suppose that in the tree $T_n$ the root has
$k$ offspring.  Let
$n_1, \ldots, n_k$ denote the number of vertices
in the subtrees rooted at each of these offspring,
so that $n_1 + \cdots + n_k = n$.  Note that in constructing $T_{n+1}$
from $T_n$,
either a new vertex is attached to the $j^{\mathrm{th}}$ subtree
with probability $n_j/(n+1)$ 
or it is attached to the root and begins a new subtree with
probability $1/(n+1)$. Thus, the manner in which the number
and sizes of subtrees rooted at offspring of the root evolve
is given by the number and sizes of tables in the simplest
{\em Chinese restaurant process}: the $n^{\mathrm{th}}$ customer
to enter the restaurant finds $k$ tables in use with
respective numbers of occupants $n_1, \ldots, n_k$ and 
the customer either sits at the $j^{\mathrm{th}}$ table
with probability $n_j/(n+1)$ or starts a new table with probability
$1/(n+1)$.

It is clear from the construction of $(T_n)_{n \in \bN}$ 
that if we begin observing
the subtree below one of the offspring of the root at the time
the offspring first appears and only record the state
of the subtree at each time it grows, then the resulting tree-valued process
has the same dynamics as $(T_n)_{n \in \bN}$.  Iterating this observation,
we see that we may think of $(T_n)_{n \in \bN}$ 
as an infinite collection of hierarchically nested Chinese
restaurant processes  and, in particular, that $(T_n)_{n \in \bN}$ arises
as an instance of the  trickle-down construction.

Rather than just investigate the Doob-Martin compactification 
of $(T_n)_{n \in \bN}$ we first recall the definition of
Pitman's two-parameter family of processes to which the simple Chinese
restaurant process belongs
-- see \cite{MR2245368} for background and an extensive
treatment of the properties of these processes.    
We then apply the trickle-down construction to 
build a tree-valued Markov chain that uses these more general
processes as routing instructions.  Analogous
finitely nested Chinese restaurant processes have been used
in hierarchical Bayesian inference \cite{MR2279480}.

A member of the family of Chinese restaurant processes
is specified by two parameters $\alpha$ and $\theta$
that satisfy the constraints
\[
\text{$\alpha < 0$ and $\theta = - M \alpha$ for some $M \in \bN$}
\]
or
\[
\text{$0 \le \alpha < 1$ and $\theta > - \alpha$.}
\]
At time $p$ the state of the process is a partition $\cB$ of the
set $[p]$ with $\# \cB$ blocks that are thought of as describing
the composition of $\# \cB$ occupied tables.  
The next customer arrives at time $p+1$ and
decides either to sit at an empty table with probability
\[
\dfrac{\theta + \alpha \#\cB }{p + \theta},
\]
thereby adjoining an extra block $\{p+1\}$ to the
partition and increasing the number of blocks by $1$,
or else to
sit at an occupied table $B \in \cB$ of size $\# B $ with probability
\[
\dfrac{\# B -  \alpha}{p + \theta},
\]
thereby replacing the block $B$ by the block $B \cup \{p+1\}$
and leaving the number of blocks unchanged.

The probability that the partition of $[q]$  we see at time $q$
is $\cB = \{B_1, \ldots, B_n\}$ with block sizes $b_k = \# B_k$ is
\[
\frac
{
(\theta+\alpha)(\theta+2\alpha) \cdots (\theta+(n-1)\alpha)
}
{
(\theta+1) (\theta+2) \cdots (\theta+q-1)
}
\prod_{k=1}^n (1-\alpha) (2-\alpha) \cdots (b_k-1-\alpha).
\]
Note that if $\alpha < 0$ and $\theta = - M \alpha$ for some $M \in \bN$,
then, with probability one, the number of blocks in the partition is 
always at most $M$.

We are only interested in the process that records the number
and size of the blocks.  This process is also Markov.
The probability that the random partition  at time
$q$ has block sizes $b_1, b_2, \ldots, b_n$ is
\[
\begin{split}
& \frac
{
(\theta+\alpha)(\theta+2\alpha) \cdots (\theta+(n-1)\alpha)
}
{
(\theta+1) (\theta+2) \cdots (\theta+q-1)
}
\prod_{k=1}^n (1-\alpha) (2-\alpha) \cdots (b_k-1-\alpha) \\
& \quad \times \binom{q-1}{b_1 - 1} \binom{q-b_1-1}{b_2-1} \cdots \binom{q-b_1-\cdots-b_{n-2}-1}{b_{n-1}-1}.\\
\end{split}
\]
The ordering of the blocks in this formula is their order of appearance: $b_1$ is the size
of the initial table, $b_2$ is the size of the table that began receive customers next, and so on.

More generally, the probability that we go from the partition $\cA = \{A_1, \ldots, A_m\}$ at time $p$
to the partition $\cB = \{B_1, \ldots, B_n\}$ at time $q>p$ is
\[
\begin{split}
& \frac
{
(\theta+m\alpha)(\theta+2\alpha) \cdots (\theta+(n-1)\alpha)
}
{
(\theta+p) (\theta+p+1) \cdots (\theta+q-1)
}  \\
& \quad \times \prod_{k=1}^m (a_k - \alpha) (a_k+1-\alpha) \cdots (b_k-1-\alpha)
\prod_{k=m+1}^n (1-\alpha) (2-\alpha) \cdots (b_k-1-\alpha). \\
\end{split}
\]
The corresponding
probability that we go from a partition with block sizes $a_1, \ldots, a_m$
at time $p$ to one with block sizes $b_1, \ldots, b_n$ at time $q>p$ is
\[
\begin{split}
& \frac
{
(\theta+m\alpha)(\theta+2\alpha) \cdots (\theta+(n-1)\alpha)
}
{
(\theta+p) (\theta+p+1) \cdots (\theta+q-1)
} \\
& \quad \times \prod_{k=1}^m (a_k - \alpha) (a_k+1-\alpha) \cdots (b_k-1-\alpha) 
\prod_{k=m+1}^n (1-\alpha) (2-\alpha) \cdots (b_k-1-\alpha)\\
& \quad \times \binom{q-p}{b_1-a_1} \binom{(q-b_1)-(p-a_1)}{b_2-a_2} \\
& \quad \times \cdots \binom{(q-b_1-\cdots-b_{m-1}) - (p-a_1-\cdots-a_{m-1})}{b_m-a_m} \\
& \quad \times \binom{q-b_1-\cdots-b_m-1}{b_{m+1}-1} \binom{q-b_1-\cdots-b_{m+1}-1}{b_{m+2}-1} \\ 
& \quad \times \cdots \binom{q-b_1-\cdots-b_{n-2}-1}{b_{n-1}-1}.\\
\end{split}
\]

We can think of the block size process as a Markov chain with state space
\[
\EE := 
\{(0,0,\cdots)\} 
\sqcup
\bigsqcup_{m \in \bN} \bN^m \times \{0\} \times \{0\} \times \cdots
\subset 
\bN_0^\bN
\]
when $0 \le \alpha < 1$ and $\theta > - \alpha$, or
\[
\EE := 
\{(0,0,\cdots)\} 
\sqcup
\bigsqcup_{m =1}^M \bN^m \times \{0\}^{M-m}
\subset 
\bN_0^M
\]
when $\alpha < 0$ and $\theta = - M \alpha$ for some $M \in \bN$.
For two states $\aaa=(a_1, \ldots, a_m, 0, 0, \ldots) \in \EE$ and 
$\bbb=(b_1, \ldots, b_n, 0, 0, \ldots) \in \EE$
with $1 \le m \le n$, $b_i \ge a_i > 0$ when $1 \le i \le m$,  $b_j > 0$
when $m+1 \le j \le n$, $\sum_{i=1}^m a_i = p$, and $\sum_{j=1}^n b_j = q$,
the Martin kernel is
\[
\begin{split}
K(\aaa,\bbb)
& =
\frac
{
(\theta+1)(\theta+2) \cdots (\theta+p-1)
}
{
(\theta+\alpha)(\theta+2 \alpha) \cdots (\theta+(m-1)\alpha)
} \\
& \quad \times 
\left[\prod_{k=1}^m (1-\alpha) (2-\alpha) \cdots (a_k-1-\alpha)\right]^{-1} \\
& \quad \times
\frac
{(q-p)!}
{(b_1-a_1)! ((q-p)-(b_1-a_1))!}
\frac
{(b_1-1)! (q-b_1)!}
{(q-1)!} \\
& \quad \times
\frac
{((q-p) - (b_1-a_1))!}
{(b_2-a_2)! ((q-p)-(b_1-a_1) - (b_2-a_2))!}
\frac
{(b_2-1)! (q-b_1-b_2)!}
{(q-b_1-1)!} \\
& \qquad \cdots \\
& \quad \times
\frac
{((q-p) - (b_1-a_1) - \cdots - (b_{m-1}-a_{m-1}))!}
{(b_m-a_m)! ((q-p)-(b_1-a_1) - \cdots - (b_m-a_m))!} \\
& \quad \times
\frac
{(b_m-1)! (q-b_1- \cdots -b_m)!}
{(q-b_1-\cdots-b_{m-1}-1)!}.\\
\end{split}
\]
This expression can be rearranged to give
\[
\begin{split}
& \frac
{
(\theta+1)(\theta+2) \cdots (\theta+p-1)
}
{
(\theta+\alpha)(\theta+2 \alpha) \cdots (\theta+(m-1)\alpha)
}
\left[\prod_{k=1}^m (1-\alpha) (2-\alpha) \cdots (a_k-1-\alpha)\right]^{-1} \\
& \quad \times 
\frac
{(q-p)!}
{((q-p)-(b_1-a_1) - \cdots - (b_m-a_m))!} \\
& \qquad \times
\frac
{(b_1-1)!}
{(b_1-a_1)!}
\cdots
\frac
{(b_m-1)!}
{(b_m-a_m)!} \\
& \qquad \times
\frac
{(q-b_1)!}
{(q-1)!} 
\frac
{(q-b_1-b_2)!}
{(q-b_1-1)!}
\cdots
\frac
{(q-b_1-b_2-\cdots -b_m)!}
{(q-b_1-b_2-\cdots-b_{m-1}-1)!} \\
& \quad =
\frac
{
(\theta+1)(\theta+2) \cdots (\theta+p-1)
}
{
(\theta+\alpha)(\theta+2 \alpha) \cdots (\theta+(m-1)\alpha)
}
\left[\prod_{k=1}^m (1-\alpha) (2-\alpha) \cdots (a_k-1-\alpha)\right]^{-1} \\
& \qquad \times 
\left[(q-p+1) (q-p+2) \cdots (q-1)\right]^{-1} \\
& \qquad \times 
\prod_{k=1}^m (b_k-a_k+1) \cdots (b_k-1) 
\prod_{k=1}^{m-1} \left(q - \sum_{\ell=1}^k b_\ell\right).\\
\end{split}
\]

If $(\bbb_N)_{N \in \bN} = ((b_{N,1}, b_{N,2}, \ldots))_{N \in \bN}$ 
is a sequence from $\EE$
such that $\#\{N \in \bN : \bbb_N = \bbb\} < \infty$
for all $\bbb \in \EE$, then 
$\lim_{N \rightarrow \infty} \sum_{k=1}^\infty b_{N,k} = \infty$.
In this case, it is not hard to see that
$\lim_{N \rightarrow \infty} K(\aaa, \bbb_N)$ exists
for $\aaa \in \EE$ if and only if
\[
\lim_{N \rightarrow \infty}
\frac
{b_{N,k}}
{\sum_{\ell=1}^\infty b_{N,\ell}}
=: \rho_k
\]
exists for all $k \in \bN$.  Furthermore, for 
$\aaa = (a_1, a_2, \ldots, a_m, 0, \ldots)$ as above
\[
\begin{split}
\lim_{N \rightarrow \infty}
K(\aaa, \bbb_N)
& =
\frac
{
(\theta+1)(\theta+2) \cdots (\theta+p-1)
}
{
(\theta+\alpha)(\theta+2 \alpha) \cdots (\theta+(m-1)\alpha)
} \\
& \quad \times
\left[\prod_{k=1}^m (1-\alpha) (2-\alpha) \cdots (a_k-1-\alpha)\right]^{-1} \\
& \quad \times
\rho_1^{a_1-1} \cdots \rho_m^{a_m-1} \\
& \quad \times
(1 - \rho_1)(1-\rho_1-\rho_2) \cdots (1-\rho_1-\rho_2-\cdots-\rho_{m-1}) \\
& =: K(\aaa, \rho). \\
\end{split}
\]

Note that $\lim_{N \rightarrow \infty} K(\aaa, \bbb_N)$
exists for all $\aaa \in \EE$ 
if and only if the limit exists for all $\aaa \in \EE$ of the form $(1,\ldots,1,0,0,\ldots)$
(that is, for all $\aaa \in \EE$ with entries in $\{0,1\}$).
Note also that the extended Martin kernel has the property that
\[
K(\aaa, \rho) = 0
\Leftrightarrow
\begin{cases}
& a_k \ge 1 \; \text{for some $k$ with $\sum_{j=1}^{k-1} \rho_k = 1$}, \\
& a_k \ge 2 \; \text{for some $k$ with $\rho_k = 0$}.
\end{cases}
\]

Recall that if $\aaa$ is as above, then the transition probabilities
of the block size process are given by
\[
P(\aaa, \bbb)
=
\begin{cases}
\frac{\theta + \alpha m}{\theta+p},
& \quad \text{if $\bbb = (a_1, \ldots, a_m, 1, 0,\ldots)$},\\
\frac{a_k - \alpha}{\theta+p},
& \quad \text{if $\bbb = (a_1, \ldots, a_{k+1}, \ldots, a_m,0,\ldots)$}.
\end{cases}
\]

The Doob $h$-transform corresponding to the regular function
$h_\rho := K(\cdot, \rho)$ therefore
has transition probabilities given by
\[
\begin{split}
& P^{(h_\rho)}(\aaa, \bbb) \\
& \quad =
\begin{cases}
\frac{\theta + \alpha m}{\theta+p}
\frac{\theta + p}{\theta + m\alpha} 
(1 - \rho_1 - \cdots - \rho_m),
& \quad \text{if $\bbb = (a_1, \ldots, a_m, 1, 0,\ldots)$},\\
\frac{a_k - \alpha}{\theta+p}
(\theta+p) (a_k - \alpha)^{-1} \rho_k, 
& \quad \text{if $\bbb = (a_1, \ldots, a_k + 1, \ldots, a_m,0,\ldots)$}.
\end{cases} \\
\end{split}
\]
That is,
\[
\begin{split}
& P^{(h_\rho)}(\aaa, \bbb) \\
& \quad =
\begin{cases}
(1 - \rho_1 - \cdots - \rho_m),
& \quad \text{if $\bbb = (a_1, \ldots, a_m, 1, 0,\ldots)$},\\
\rho_k, 
& \quad \text{if $\bbb = (a_1, \ldots, a_k + 1, \ldots, a_m,0,\ldots)$}.
\end{cases} \\
\end{split}
\]

Note that the parameters $\alpha$ and $\theta$ do not appear 
in this expression for the transition probabilities.
It follows that for a given $M$ the block size
chains all arise as Doob $h$-transforms of each other.

We can build a Markov chain $(W_n)_{n \in \bN_0}$ with transition matrix
$P^{(h_\rho)}$ and initial state $\ccc$ as follows.  
Let $(V_n)_{n \in \bN}$ be a sequence of independent
identically distributed random variables taking
values in $[M] \cup \{\infty\}$ with $\bP\{V_n = k\} = \rho_k$
for $k \in [M]$ and $\bP\{V_n = \infty\} = 1 - \sum_\ell \rho_\ell$
(the latter probability is always $0$ when $M$ is finite).  
Define $(W_n)_{n \in \bN_0}$ inductively by setting $W_0 = \ccc$
and, writing $N_n := \inf\{j \in [M] : W_{nj} = 0\}$ with the
usual convention that $\inf \emptyset = \infty$,
\[
W_{n+1} 
=
\begin{cases}
(W_{n1}, \ldots, W_{nN_n}, 1, 0, \ldots),
& \quad \text{if $V_{n+1} > N_n$},\\
(W_{n1}, \ldots, W_{nk}+1, \ldots W_{nN_n}, 0, \ldots),
& \quad \text{if $V_{n+1} = k \le N_n$},\\
\end{cases}
\]
for $n \ge 0$.  It is clear from this construction and Kolmogorov's
zero-one law that the tail $\sigma$-field of the chain is trivial,
and so the regular function $h_\rho$ is extremal.

\subsection{Chinese restaurant trees}
\label{SS:CRP_and_trees}

Fix an admissible pair of parameters $\alpha$ and $\theta$ for
the two-parameter Chinese restaurant process.  Set $M := \infty$
when $0 \le \alpha < 1$ and
$M:= - \theta/\alpha \in \bN$ when $\alpha < 0$.  Put
$[M] := \bN$ for $M=\infty$ and $[M]:=\{1, \ldots, M\}$
otherwise.

Consider the trickle-down construction with the following
ingredients. The underlying directed acyclic graph $\II$ has vertex set
$[M]^\star:=\bigsqcup_{k=0}^\infty [M]^k$, the set of finite tuples
or words drawn from the alphabet $[M]$ (with the empty word $\emptyset$
allowed) and directed edges are defined in a manner analogous to that in Subsection~\ref{SS:RRT_and_perms} -- when $[M] = \bN$ we just
recover the complete Harris-Ulam tree of Subsection~\ref{SS:RRT_and_perms}.
Thus, $\II$ is a tree rooted at $\emptyset$ in which we may identify
$\beta(u)$, the set of offspring of vertex $u \in \II$, with $[M]$
for every vertex $u$.  With this identification, we take the routing
chain for every vertex to be the Chinese restaurant block size process 
with parameters $\alpha$ and $\theta$.

We may think of the state space $\SSS$
of the trickle-down chain $(X_n)_{n \in \bN_0}$ as
the set of finite subsets $\ttt$ of
$\II$ with the property that if a word 
$v = v_1 \ldots v_\ell \in \ttt$, then 
$v_1 \ldots v_{\ell-1} \in \ttt$ and 
$v_1 \ldots v_{\ell-1} k \in \ttt$ for $1 \le k < v_\ell$.
That is, when $[M] = \bN$ we may think of $\SSS$ as the set
of finite rooted Harris-Ulam trees from Subsection~\ref{SS:RRT_and_perms}, and when $M$ is finite
we get an analogous collection in which each individual
has at most $M$ offspring.

The partial order on $\II = [M]^\star$ is the one we get by
declaring that $u \le v$ for two words $u,v \in \II$ if and only
if $u = u_1 \ldots u_k$ and $v = v_1 \ldots v_\ell$
with $k \le \ell$ and $u_1 \ldots u_k = v_1 \ldots v_\ell$, 
just as for the complete binary tree.
By analogy with the notation introduced in
Example~\ref{E:BST} for finite rooted binary trees, 
write  $\#\ttt(u) := \#\{v \in \ttt : u \le v\}$
for $\ttt \in \SSS$ and $u \in [M]^\star$.

It follows from the discussion in Subsection~\ref{SS:CRPs} that 
Hypothesis~\ref{H:convergence_equivalent_ratios} holds.
We may identify the set $\II_\infty$ with 
\[
[M]^\infty
\sqcup
\bigsqcup_{k=0}^\infty ([M]^k \times \{\diamond\}^\infty)
\]
For each vertex $u \in \II$ the collection $\cS^u$
consists of all probability measures on $\beta(u)$
when $M$ is finite and all
subprobability measures on $\beta(u)$
when $M = \infty$.  
We may therefore 
identify $\partial \SSS$ with the probability measures
on $\II_\infty$ that assign all of their mass
to $[M]^\infty$ when $M$ is finite and with the set of all
probability measures on $\II_\infty$
when $M = \infty$.  We may extend the partial order by declaring that
$u < v$ for $u \in \II = [M]^\star$ and 
$v \in \II_\infty
=
[M]^\infty
\sqcup
\bigsqcup_{k=0}^\infty ([M]^k \times \{\diamond\}^\infty)
$
if and only if 
$u = u_1 \ldots u_k$ and $v = v_1 v_2 \ldots$
with $u_1 \ldots u_k = v_1 \ldots v_k$.

The following result summarizes 
the salient conclusions of the above discussion.

\begin{theorem}
Consider the Chinese restaurant tree process with parameters $(\alpha,\theta)$,
where $\alpha < 0$ and $\theta = -M \alpha$ for some $M \in \bN$
or $0 \le \alpha < 1$ and $\theta > -\alpha$, in which case we
define $M = \infty$.  We may identify the state space $\SSS$ of this
process as the set of finite rooted Harris-Ulam trees 
where the vertices
are composed of digits drawn from $[M]$.
When $M<\infty$ (resp. $M=\infty$),
the Doob-Martin boundary $\partial \SSS$ is homeomorphic 
to the space of probability  measures
on $[M]^\infty$ 
(resp. 
$[M]^\infty
\sqcup
\bigsqcup_{k=0}^\infty ([M]^k \times \{\diamond\}^\infty)$)
equipped with the topology of weak convergence.
With this identification, a sequence $(\ttt_n)_{n \in \bN}$
of finite rooted Harris-Ulam trees converges in
the topology of the Doob-Martin compactification
$\bar \SSS$ to the (sub)probability
measure $\mu$ in the Doob-Martin boundary  $\partial \SSS$ if and only if
$\lim_{n \to \infty} \# \ttt_n = \infty$ and
\[
\lim_{n \to \infty}
\frac{\# \ttt_n(u)}
{\# \ttt_n}
=
\mu\left\{v \in  
[M]^\infty
\sqcup
\bigsqcup_{k=0}^\infty ([M]^k \times \{\diamond\}^\infty): 
u < v\right\}
\]
for all $u \in [M]^\star$.
\end{theorem}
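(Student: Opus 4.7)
The proof will proceed by direct application of Theorem~\ref{T:main}. Since Hypothesis~\ref{H:convergence_equivalent_ratios} has already been verified for the two-parameter Chinese restaurant block size routing chains through the explicit Martin kernel computations of Subsection~\ref{SS:CRPs}, Theorem~\ref{T:main} immediately provides a bijection between $\partial\SSS$ and $\cR_\infty$, which Lemma~\ref{L:equiv_compat_and_probs} translates into a bijection with a certain subspace of probability measures on $\II_\infty$. The first substantive task is to identify the set $\cS^u$ at each vertex $u$. The identity $|\sigma_n^u|=n$ (with the sum ranging over the finite coordinate set $\beta(u) \cong [M]$) forces any $\rho \in \cS^u$ to satisfy $\sum_k \rho_k = 1$ when $M<\infty$, so $\cS^u$ consists of all probability measures on $[M]$; by contrast, when $M=\infty$ a routing instruction that keeps opening new tables without returning to old ones lets mass escape to infinity, and one sees from Hypothesis~\ref{H:convergence_equivalent_ratios}(vii) that $\cS^u$ then comprises all subprobability measures on $\bN$.

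With this in hand, the shape of $\partial\SSS$ follows. For $M<\infty$, every compatible collection $(r^u)_{u\in\II^\downarrow}$ consists of honest probability measures on $[M]$, so the Markov chain on $\II_\infty$ built from these measures never transitions to the cemetery $\diamond$, and the corresponding probability measure on $\II_\infty$ is supported on $[M]^\infty$. For $M=\infty$, each $r^u$ may be a strict subprobability measure, and the associated measure on $\II_\infty$ can place positive weight on paths in $\bigsqcup_{k=0}^\infty ([M]^k \times \{\diamond\}^\infty)$ that are killed after finitely many steps; the full space $[M]^\infty \sqcup \bigsqcup_{k=0}^\infty([M]^k \times \{\diamond\}^\infty)$ appears.

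It remains to recast Theorem~\ref{T:main}(i) in the stated form involving subtree size ratios. Along the unique directed path $\emptyset \to u_1 \to u_1u_2 \to \cdots \to u_1 \cdots u_{|u|} = u$ one telescopes
\[
\frac{\#\ttt_n(u)}{\#\ttt_n}
=
\prod_{j=1}^{|u|}
\frac{\#\ttt_n(u_1 \ldots u_j)}{\#\ttt_n(u_1 \ldots u_{j-1})},
\]
and each factor equals $(y_n^{u_1\ldots u_{j-1}})^{u_j}/\#\ttt_n(u_1\ldots u_{j-1})$. Since $|y_n^w| = \#\ttt_n(w)-1$ whenever $w \in \ttt_n$, this ratio is asymptotic to $(y_n^{u_1\ldots u_{j-1}})^{u_j}/|y_n^{u_1\ldots u_{j-1}}|$, which by Theorem~\ref{T:main}(i) tends to $(r^{u_1\ldots u_{j-1}})^{u_j}$ provided each proper prefix of $u$ lies in $\II^\downarrow$. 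The product of these limits is precisely $\mu\{v : u<v\}$ by the formula in Lemma~\ref{L:equiv_compat_and_probs}(i); and if some prefix of $u$ lies in $\II^\dag$, one verifies that both sides vanish.

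The main obstacle is not the telescoping itself but the topological assertion: the composite bijection from the space of (sub)probability measures on $\II_\infty$ to $\partial\SSS$ has to be shown to be a homeomorphism between the weak topology and the Doob-Martin topology. For $M<\infty$ this is routine because $[M]^\infty$ is a compact metric space and cylinder probabilities generate the weak topology, but for $M=\infty$ one must equip $\II_\infty$ with a suitable topology so that the coarsest topology making cylinder set probabilities continuous, introduced in Lemma~\ref{L:probs_on_I_infty} and invoked by Theorem~\ref{T:main}(iv), coincides with the intended weak topology on the (sub)probability measure space.
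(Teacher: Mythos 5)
Your proposal is correct and follows essentially the same route as the paper: the paper obtains this theorem as a direct consequence of Theorem~\ref{T:main}, having asserted that Hypothesis~\ref{H:convergence_equivalent_ratios} holds for the Chinese restaurant block-size routing chains by the Martin kernel computations of Subsection~\ref{SS:CRPs}, and having identified $\cS^u$ with the (sub)probability measures on $\beta(u) \cong [M]$ exactly as you do. Your telescoping of $\#\ttt_n(u)/\#\ttt_n$ along the path from the root, which converts the component-ratio convergence $y_n^w/|y_n^w| \to r^w$ of Theorem~\ref{T:main}(i) into the stated subtree-size condition via $\#\ttt_n(w) = |y_n^w|+1$, is a detail the paper leaves implicit, and your remark about matching the cylinder-set topology of Lemma~\ref{L:probs_on_I_infty} with the weak topology when $M=\infty$ is a legitimate point of care that the paper also passes over.
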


\begin{example}
Suppose that $M=\infty$.  Consider the sequence $(\ttt_n)_{n \in \bN}$
of finite rooted Harris-Ulam trees given by
$\ttt_n := \{\emptyset, 1, 2, \ldots, n, 21, 211, \ldots, 21^{n-1}\}$,
where the notation $21^k$ indicates $2$ followed by $k$ $1$s.
This sequence of trees converges in the topology of $\bar \SSS$ to
the probability measures on 
$[M]^\infty
\sqcup
\bigsqcup_{k=0}^\infty ([M]^k \times \{\diamond\}^\infty)$
that puts mass $\frac{1}{2}$ at the point $\diamond \diamond \diamond \ldots$
and mass $\frac{1}{2}$ at the point $2111 \ldots$.
\end{example}

\begin{remark}
The calculations of the extended Martin kernel and Doob $h$-transform
transition probabilities associated with a given 
$\mu \in \partial \SSS$ are straightforward but notationally
somewhat cumbersome, so we omit them.  They show that there is
the following ``trickle-up'' construction
of a Markov chain $(W_n)_{n \in \bN_0}$ 
with initial state $\www \in \SSS$
and the $h$-transform transition probabilities
(compare the analogous construction for the Chinese restaurant
process itself in Subsection~\ref{SS:CRPs}).

Let $(V^n)_{n \in \bN}$ be a sequence
of independent, identically distributed $\II_\infty$-valued random
variables with common distribution $\mu$. Suppose that $\SSS$-valued
random variables $\www=:W_0 \subset \ldots \subset W_n$ 
have already been defined.  Put 
$H(n+1):= \max\{h \in \bN : V_1^{n+1} \ldots V_h^{n+1} \in W_n\}$,
with the convention $\max \emptyset = 0$, and 
$M(n+1):= \max\{m \in \bN : V_1^{n+1} \ldots V_{H(n+1)}^{n+1} m \in W_n\}$,
again with the convention $\max \emptyset = 0$.
Set $W_{n+1} := W_n \cup \{V_1^{n+1} \ldots V_{H(n+1)}^{n+1} (M(n+1) + 1)\}$.
For example, if $\www = \emptyset$ and $\mu$ is the unit point mass
at the sequence $\diamond \diamond \ldots$, then $W_n = \{\emptyset, 1, \ldots, n\}$
for $n \ge 1$; that is, $W_n$ consists of the root $\emptyset$ and the first $n$
children of the root.

It is clear from the Kolmogorov zero-one law
that the tail $\sigma$-field of $(W_n)_{n \in \bN_0}$ is
trivial for any $\mu$, and so any $\mu$ is extremal.
\end{remark}

\begin{remark} By analogy with the definition of the 
BST process in Section  \ref{S:BST_and_DST}, we define $T_n$ 
to be
 the set of vertices occupied by time $n$ 
 (so that $T_0 = \{\emptyset\}$).
Put, for each vertex $u$, $T_n(u) := \{v \in T_n : u \le v \}$. The distribution of the random probability measure $R$ on $[M]^\infty$ defined by
%$\lim_{n \to \infty} X_n$ 
$R\{w\in [M]^\infty\, : \, u < w \}  := \lim_{n\to \infty}\# T_n(u)/\#T_n, \quad u\in [M]^\star,$ 
% starting from $X_0 = \{\emptyset\}$
may be derived from known properties of the two-parameter Chinese restaurant process
(see, for example, Theorem 3.2 of \cite{MR2245368}).
For $v \in [M]^\star$ put 
\[
(U_{v1}, U_{v2}, U_{v3}, \ldots) 
:= 
(B_{v1}, (1-B_{v1})B_{v2}, (1-B_{v1})(1-B_{v2})B_{v3}, \ldots),
\]
where the random variables $B_{vk}$, $v \in [M]^\star$, $k \in [M]$, are
independent and $B_{uk}$ has the beta distribution with parameters
$(1-\alpha, \theta + k \alpha)$.  That is, the sequence
$(U_{vk})_{k \in [M]}$ has a {\em Griffiths--Engen--McCloskey (GEM) distribution}
with parameters $(\alpha,\theta)$.
Then, $R$
%$\lim_{n \to \infty} X_n$
is distributed as the random probability measure on $[M]^\infty$ that
for each $u \in [M]^\star$ assigns mass $\prod_{\emptyset < v \le u} U_v$ to the set 
$\{w \in [M]^\infty : u < w\}$.
\end{remark}

\section{Mallows chains}
\label{S:Mallows}

\subsection{Mallows' $\phi$ model for random permutations and the associated tree}
\label{SS:Mallow_and_perms}

The {\em $\phi$ model} of Mallows \cite{MR0087267}
produces a random permutation of the set $[n]$
for some integer $n \in \bN$.
One way to describe the model is the following.

We place the elements of $[n]$ successively into 
$n$ initially vacant ``slots'' labeled by
$[n]$ to obtain
a permutation of $[n]$ (if the number
$i$ goes into slot $j$, then the permutation
sends $i$ to $j$).  To begin with,
each slot is equipped with a Bernoulli
random variable.  These random variables
are obtained by taking $n$ independent Bernoulli
random variables with common success probability $0<p<1$
and conditioning on there being at least $1$ success.  The
number $1$ is placed in the first slot for which the
associated Bernoulli random variable is a success. 
Thus, the probability that there are $k$
vacant slots to the left of $1$ is 
\[
\frac
{(1-p)^k p}
{1 - (1-p)^n},
\quad 0 \le k \le n-1.
\]
Now equip the remaining $n-1$ vacant slots (that is, every slot except
the one in which $1$ was placed) with a set of
Bernoulli random variables that is independent
of the first set.  These random variables
are obtained by taking $n-1$ independent Bernoulli
random variables with common success probability $p$
and conditioning on there being at least $1$ success.
Place the number $2$ in the first vacant
slot for which the associated Bernoulli is a success.
The probability that there are $k$
vacant slots to the left of $2$ is 
\[
\frac
{(1-p)^k p}
{1 - (1-p)^{n-1}},
\quad 0 \le k \le n-2.
\]
Continue in this fashion until all the slots have been filled.

The analogous procedure can be used to produce
a permutation of $\bN$.  Now the procedure
begins with infinitely many
slots labeled by $\bN$, and at each stage there
is no need to condition on the almost sure event
that there is at least one success.  After each $m \in \bN$ is inserted,
the current number of vacant slots to the left of the slot in which
$m$ is placed is distributed as the number of failures
before the first success in independent Bernoulli
trials with common success probability $p$, and these
random variables are independent.  We note that this
distribution on permutations of $\bN$ appears in
\cite{MR2683626} in connection with $q$-analogues of
de Finetti's theorem.

Suppose now that $\pi$ is a permutation of the set $S$, where
$S=[n]$ or $S=\bN$.
Let $I(\pi) := \pi(1)$.  That is, if we think of 
$\pi$ as a list of the elements of $S$ in some order,
then $I(\pi)$ is the index of $1$.
Put $S^L(\pi) := \{i : \pi(i) < \pi(1)\}$
and $S^R(\pi) := \{i : \pi(i) > \pi(1)\}$.  
Note that $\pi$ maps $S^L(\pi)$ to $\{1, \ldots, I(\pi) - 1\}$
and $S^R(\pi)$ to $I(\pi) + \{1, \ldots, n - I(\pi)\}$ 
or $I(\pi) + \bN$, and that
$S^L(\pi)$ (respectively, $S^R(\pi)$) is the set of elements
of $S$ that appear before (respectively, after) $1$ 
in the ordered listing of $S$ defined by $\pi$.  

If $S = [n]$, write
$\psi^L(\pi)$ for the unique increasing bijection
from  $\{1, \ldots, I(\pi) - 1\}$ to $S^L(\pi)$
and
$\psi^R(\pi)$ for the unique increasing bijection
from $\{1, \ldots, n - I(\pi)\}$  to $S^R(\pi)$.
If $S = \bN$, define $\psi^L(\pi)$ and $\psi^R(\pi)$
similarly, except that now $\psi^R(\pi)$ maps
$\bN$ to $S^R(\pi)$.

Define permutations $\sigma^L(\pi)$ and $\sigma^R(\pi)$ of
$\{1, \ldots, I(\pi) - 1\}$ and $\{1, \ldots, n - I(\pi)\}$
(if $S = [n]$) or
$\{1, \ldots, I(\pi) - 1\}$ and $\bN$ (if $S = \bN$)
by requiring that $\pi$ restricted to $S^L(\pi)$
is $\psi^L(\pi) \circ \sigma^L(\pi) \circ (\psi^L(\pi))^{-1}$ and that
$\pi$ restricted to $S^R(\pi)$
is $\psi^R(\pi) \circ \sigma^R(\pi) \circ (\psi^R(\pi))^{-1}$.
In other words, $\sigma^L(\pi)(i)$
is the index  of the $i^{\mathrm{th}}$ smallest element
of $S^L(\pi)$ in the ordered listing of $S$ defined by $\pi$,
and $I(\pi) + \sigma^R(\pi)(i)$
is the index of the $i^{\mathrm{th}}$ smallest element
of $S^R(\pi)$ in the ordered listing of $S$ defined by $\pi$.

Note that $\pi$ is uniquely specified by the objects $I(\pi)$,
$S^L(\pi)$, $S^R(\pi)$, $\sigma^L(\pi)$, and $\sigma^R(\pi)$.

The following lemma is immediate from the construction
of the Mallows model.

\begin{lemma}
\label{L:Mallows_decomposition}
Suppose that $\Pi$ is a random permutation of 
either $[n]$ or $\bN$ that is
distributed according to the Mallows
model with parameter $p$.  Then, conditional
on $(I(\Pi), S^L(\Pi), S^R(\Pi))$, 
the permutations $\sigma^L(\Pi)$ and $\sigma^L(\Pi)$
are independent and distributed according to the
Mallows model with parameter $p$.
\end{lemma}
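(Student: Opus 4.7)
My approach has two threads: an inversion-count factorization that handles the finite case crisply, and a direct sequential-insertion argument that handles the infinite case (where there is no pointwise density to factor). I begin with $S = [n]$. First I would verify that the Mallows$(p)$ law on $[n]$ has the form $\bP(\Pi = \pi) = C_n\, q^{\mathrm{inv}(\pi)}$ with $q = 1 - p$, where $\mathrm{inv}(\pi) := \#\{(i,j) : i < j, \ \pi^{-1}(i) > \pi^{-1}(j)\}$. This follows from the paper's sequential insertion description upon checking that the rank $s_k$ of $k$'s target slot among the slots vacant at its insertion time equals one plus the number of elements larger than $k$ lying to the left of $k$ in the final arrangement; hence $\sum_k (s_k - 1) = \mathrm{inv}(\pi)$, and multiplying the insertion weights $p(1-p)^{s_k - 1}$ (with normalizations independent of $\pi$) yields the claimed form.

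Next, fix $m \in [n]$ and a partition $L \sqcup R = [n] \setminus \{1\}$ with $|L| = m - 1$. I would decompose the inversions of any $\pi$ with $I(\pi) = m$, $S^L(\pi) = L$, $S^R(\pi) = R$ into four disjoint groups: those involving $1$ (contributing exactly $m - 1$, since $1$ is minimal and sits at position $m$); cross-inversions between $L$ and $R$ (since every $L$-element precedes every $R$-element, these number $\kappa(L,R) := \#\{(a,b) \in L \times R : a > b\}$, depending only on the sets); and those internal to $L$ and to $R$, which equal $\mathrm{inv}(\sigma^L(\pi))$ and $\mathrm{inv}(\sigma^R(\pi))$ respectively because ranking within each side preserves relative order. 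Thus
\begin{equation*}
\mathrm{inv}(\pi) = (m-1) + \kappa(L,R) + \mathrm{inv}(\sigma^L(\pi)) + \mathrm{inv}(\sigma^R(\pi)).
\end{equation*}
Conditioning on $(I(\Pi), S^L(\Pi), S^R(\Pi)) = (m, L, R)$ freezes the first two summands, so the conditional mass of $(\sigma^L, \sigma^R)$ on the Cartesian product of symmetric groups is proportional to $q^{\mathrm{inv}(\sigma^L)} \cdot q^{\mathrm{inv}(\sigma^R)}$. This product form delivers both the asserted conditional independence and the Mallows$(p)$ marginals in the finite case.

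For $S = \bN$ I would argue directly from the sequential insertion. Given $I(\Pi) = m$, let $\epsilon_k \in \{L, R\}$ denote the side into which $k \ge 2$ is placed and let $s_k$ denote its rank among currently vacant slots. Under the Mallows law the $s_k$ are independent geometric$(p)$; the side $\epsilon_k$ is determined by $s_k$ and the current left-vacancy count $L_{k-1}$ via $\epsilon_k = L \iff s_k \le L_{k-1}$; conditional on $\epsilon_k = L$ the rank within left vacancies is truncated geometric on $\{1,\ldots,L_{k-1}\}$, exactly matching the Mallows insertion rule on $m - 1$ slots; and conditional on $\epsilon_k = R$ the rank within right vacancies is itself geometric$(p)$, matching the Mallows rule on $\bN$. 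Consequently, conditional on the entire side sequence---equivalently on $(S^L, S^R)$---the two subsequences of within-side ranks decouple, and each realizes the Mallows$(p)$ sequential insertion on its side. The main obstacle is to make this decoupling rigorous: it reduces to the standard fact that if a sequence of independent random variables is split into two subsequences via a measurable rule depending only on the sequence, then the two subsequences are conditionally independent given the splitting rule; this is a routine verification on cylinder events.
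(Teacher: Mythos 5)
Your proposal is correct, but note that the paper offers no written proof at all: Lemma~\ref{L:Mallows_decomposition} is simply declared to be ``immediate from the construction of the Mallows model,'' the intended (implicit) argument being exactly the sequential-insertion bookkeeping you carry out for $S=\bN$ --- once $1$ is placed, each later element chooses a vacant slot by a geometric rank, the side it lands on is determined by comparing that rank to the current left-vacancy count, and by memorylessness the within-side ranks reproduce the Mallows insertion dynamics on each side separately and independently given the side sequence. Your treatment of the finite case via the density $\bP(\Pi=\pi)=C_n q^{\mathrm{inv}(\pi)}$ and the exact decomposition $\mathrm{inv}(\pi)=(m-1)+\kappa(L,R)+\mathrm{inv}(\sigma^L)+\mathrm{inv}(\sigma^R)$ is a genuinely different and arguably cleaner route there: it buys the conditional independence and the marginals in one line from the factorization of the conditional mass function, at the cost of not extending to $\bN$ (where $\sum_k(s_k-1)$ is an a.s.\ divergent sum of i.i.d.\ geometrics, so there is no such density). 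The sequential argument is less slick but uniform in $n\le\infty$; you correctly recognize that only it survives the passage to $\bN$.

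One caveat: the ``standard fact'' you invoke at the end --- that splitting a sequence of independent variables into two subsequences by a measurable rule depending on the sequence yields conditionally independent subsequences given the split --- is false in that generality (a rule that routes $X_2$ according to whether $X_2=X_1$ already breaks it). What is true, and what your situation actually has, is the sequential structure in which the allocation $\epsilon_k$ depends only on $s_k$ and on $\epsilon_2,\dots,\epsilon_{k-1}$ (through $L_{k-1}$), and the conditional law of the within-side rank given $\epsilon_2,\dots,\epsilon_k$ is explicit. The cylinder computation you mention is then not merely a formality but the actual content: the joint probability of a side pattern together with prescribed within-side ranks factors as $\bigl[\prod_{k:\epsilon_k=L}(1-p)^{r_k-1}p\bigr]\bigl[\prod_{k:\epsilon_k=R}(1-p)^{r_k-1}p\bigr]$ times a factor depending only on the side pattern, which is exactly the asserted conditional independence with the asserted marginals. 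Phrase it that way and the proof is complete.
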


Recall from the description of the BST process
in the Introduction how it is possible to construct 
from a  permutation $\pi$ of $[n]$
a subtree of the complete binary tree $\{0,1\}^*$ that contains the
root $\emptyset$ and has $n$ vertices.  
The procedure actually produces a tree labeled with the elements
of $[n]$, but we are only interested in the underlying unlabeled tree.
Essentially the same construction produces an infinite rooted binary tree
labeled with $\bN$ from a permutation $\pi$ of $\bN$.  This tree
has the property that if a vertex $u = u_1 \ldots u_k$ belongs
to the tree, then there only finitely many vertices $v$ such that
$u_1 \ldots u_k 0 \le v$.

The following result is immediate from 
Lemma~\ref{L:Mallows_decomposition} and the recursive
nature of the procedure that produces a rooted subtree of $\{0,1\}^*$ from
a permutation.

\begin{proposition}
\label{P:Mallows_as_trickle}
Let $(X_n)_{n \in \bN_0}$ be the Markov chain
that results from the trickle-down construction applied when the
directed graph $\II$ is the infinite complete binary tree
$\{0,1\}^\star$ and all the routing chains have the common transition matrix
$Q$ on the state space $\bN_0 \times \bN_0$, where
\[
Q((i,0), (i+1,0)) := (1-p), \quad \text{for all $i \ge 0$},
\]
\[
Q((i,0), (i,1)) := p, \quad \text{for all $i \ge 0$},
\]
and
\[
Q((i,j), (i,j+1)) := 1, \quad \text{for all $i \ge 0$ and $j \ge 1$}.
\]
We may regard $(X_n)_{n \in \bN_0}$ as a Markov chain taking values
in the set of finite subtrees of $\{0,1\}^\star$ that contain the root $\emptyset$,
in which case $\{\emptyset\} = X_0 \subseteq X_1 \subseteq \ldots$ and $X_\infty := \bigcup_{n \in \bN_0} X_n$ is an infinite subtree of $\{0,1\}^\star$ that contains $\emptyset$.
Then, $X_\infty$ has the same distribution as 
the random tree constructed from
a random permutation of $\bN$ that is 
distributed according to the Mallows model with parameter $p$.
\end{proposition}

We call the Markov chain $(X_n)_{n \in \bN_0}$ 
of Proposition~\ref{P:Mallows_as_trickle} the {\em Mallows tree process}.

\subsection{Mallows urns}
\label{SS:Mallows_urn_compact}

Consider the Markov chain on $\bN_0 \times \bN_0$ with transition matrix
$Q$ introduced in
Proposition~\ref{P:Mallows_as_trickle}.  We call this
chain the {\em Mallows urn}, because its role as a 
routing chain for the Mallows tree process is similar
to that played by the P\'olya urn in the construction of the BST
process.  When started from $(0,0)$, a sample path of the Mallows urn
process looks like $(0,0), (1,0), \ldots, (K,0), (K,1), (K,2), \ldots$,
where $\bP\{K=k\}= (1-p)^k p$ for $k \in \bN_0$.

The probability that the Mallows urn process visits the
state $(k,\ell)$ starting from the state $(i,j)$ is
\[
\begin{cases}
(1-p)^{k-i}, & \text{if $i \le k$, $j=0$ and $\ell = 0$},\\
(1-p)^{k-i} p, & \text{if  $i \le k$, $j=0$ and $\ell \ge 1$},\\
1, & \text{if $i = k$ and $1 \le j \le \ell$,} \\
0, & \text{otherwise.}
\end{cases}
\]
In particular, the probability that the process visits
$(k,\ell)$ starting from $(0,0)$ is
\[
\begin{cases}
(1-p)^{k}, & \text{if $\ell = 0$},\\
(1-p)^{k} p, & \text{if $\ell \ge 1$.}
\end{cases}
\]

Taking, as usual, $(0,0)$ as the reference state, 
the Martin kernel for the Mallows urn process is thus
\[
K((i,j),(k,\ell))
:=
\begin{cases}
(1-p)^{-i}, & \text{if $i \le k$, $j=0$ and $\ell = 0$},\\
(1-p)^{-i}, & \text{if  $i \le k$, $j=0$ and $\ell \ge 1$},\\
(1-p)^{-k} p^{-1}, & \text{if $i = k$ and $1 \le j \le \ell$}, \\
0, & \text{otherwise,}
\end{cases}
\]
or, equivalently,
\begin{equation}
\label{Mallows_urn_kernel}
K((i,j),(k,\ell))
=
\begin{cases}
(1-p)^{-i}, & \text{if $i \le k$ and $j=0$},\\
(1-p)^{-i} p^{-1}, & \text{if $i = k$ and $1 \le j \le \ell$}, \\
0, & \text{otherwise.}
\end{cases}
\end{equation}

It follows that if $((k_n,\ell_n))_{n \in \bN_0}$ is a sequence
for which $k_n + \ell_n \rightarrow \infty$ then, in order for the
sequence $(K((i,j),(k_n,\ell_n)))_{n \in \bN_0}$
to converge, it must either be that  $k_n = k_\infty$ for
some $k_\infty$ for all $n$ sufficiently large
and $\ell_n \rightarrow \infty$, in which case the limit is
\[
\begin{cases}
(1-p)^{-i}, & \text{if $i \le k_\infty$ and $j=0$},\\
(1-p)^{-i} p^{-1}, & \text{if $i = k_\infty$ and $j \ge 1$}, \\
0, & \text{otherwise,}
\end{cases}
\]
or that $k_n \rightarrow \infty$
with no restriction on $\ell_n$, in which case the limit is
\[
\begin{cases}
(1-p)^{-i}, & \text{if $j=0$},\\
0, & \text{otherwise.}
\end{cases}
\]

Consequently, the Doob-Martin compactification 
$\overline{\bN_0 \times \bN_0}$ of the state space of the Mallows urn process
is such that the Doob-Martin boundary
$\partial (\bN_0 \times \bN_0) := \overline{\bN_0 \times \bN_0} \setminus \bN_0 \times \bN_0$
can be identified with $\bN_0 \cup \{\infty\}$, the usual
one-point compactification of $\bN_0$.

With this identification, 
the state space of the 
$h$-transformed process corresponding to the boundary point $k \in \bN_0$
is $\{(0,0), (1,0), \ldots, (k,0)\} \cup \{(k,1), (k,2), \ldots\}$ and
the transition probabilities are
\[
Q^h((i,0), (i+1,0)) = ((1-p)^{-i})^{-1} (1-p) (1-p)^{-(i+1)} = 1, \quad \text{for $0 \le i \le k-1$},
\]
\[
Q((k,0), (k,1)) = ((1-p)^{-i})^{-1} p (1-p)^{-i} p^{-1} = 1, 
\]
and
\[
Q((k,j), (k,j+1)) = ((1-p)^{-i} p^{-1})^{-1} 1 (1-p)^{-i} p^{-1} = 1, \quad \text{for all $j \ge 1$}.
\]
Thus, a realization of the $h$-transformed process starting from
$(0,0)$ is the deterministic path 
$(0,0), (1,0), \ldots, (k,0), (k,1), (k,2), \ldots$.

Similarly, the state space of the
$h$-transformed process corresponding to the boundary point $\infty$
is $\{(0,0), (1,0), (2,0), \ldots\}$
and a realization of the $h$-transformed process starting from
$(0,0)$ is the deterministic path $(0,0), (1,0), (2,0), \ldots$.

\subsection{Mallows tree process}
\label{SS:Mallows_tree_compact}

Recall from Example~\ref{E:dag_tree} that we may identify the state space
$\SSS$ of the Mallows tree process with the set of finite subtrees
of the complete binary tree $\II = \{0,1\}^\star$ that contain the root $\emptyset$, 
and with this identification the partial order $\preceq$ is just subset containment.

Consider $\sss$ in $\SSS$ and a sequence
$(\ttt_n)_{n \in \bN_0}$ from $\SSS$ such that
$\# \ttt_n \to \infty$ as $n \to \infty$.  Given
a vertex $u$ of $\{0,1\}^\star$ write, as in Section~\ref{S:BST_and_DST}, 
$\# \sss(u) := \{v \in \sss: u \le v\}$ and define
$\# \ttt_n(u)$ similarly.  Note that in this setting the consistency
condition \eqref{E:consistency_condition} becomes
$(\# \sss(u) - 1)_+ =\# \sss(u0) +\# \sss(u1)$ and 
$(\# \ttt_n(u) - 1)_+ = \# \ttt_n(u0) + \# \ttt_n(u1)$.

Write 
\[
L(\sss) := \sum_{u \in \{0,1\}^*} \# \sss(u0).
\]
When $\sss \subseteq \ttt_n$, put 
\[
M(\sss, \ttt_n) 
:= \# \{u \in \{0,1\}^* : 
\# \sss(u0) = \# \ttt_n(u0), \; \# \sss(u1) \ge 1\}
\]
and 
\[
I(\sss, \ttt_n) :=
\begin{cases}
1, & \text{if $\# \sss(u0) = \# \ttt_n(u0)$ whenever $\# \sss(u1) \ge 1$,}\\
0, & \text{otherwise.}
\end{cases}
\]

{}From Corollary~\ref{C:Martin_kernel_is_product} and 
\eqref{Mallows_urn_kernel}, the Martin kernel of the Mallows tree process is
\[
K(\sss,\ttt_n)
:=
\begin{cases}
(1-p)^{-L(\sss)} p^{-M(\sss,\ttt_n)} I(\sss,\ttt_n), & \text{if $\sss \subseteq \ttt_n$,}\\
0, & \text{otherwise.}
\end{cases}
\]

Note that if $\sss \subseteq \ttt_n$ and $\# \sss(u0) = \# \ttt_n(u0)$,
then $\{v \in \sss : u0 \le v\} = \{v \in \ttt_n : u0 \le v\}$.
Therefore, when
$\sss \subseteq \ttt_n$, $M(\sss,\ttt_n)$ counts the 
number of vertices of the form
$u0$ such that the subtree below $u0$ in $\sss$ is the same
as the subtree below $u0$ in $\ttt_n$ and $u1 \in \sss$. 
Similarly, $I(\sss,\ttt_n) = 1$
if and only if 
for all vertices of the form $u0$, 
the subtree below $u0$ in $\sss$ is the same
as the subtree below $u0$ in $\ttt_n$ whenever
$u1 \in \sss$.  
Hence, if $\sss \subseteq \ttt_n$, then
\[
p^{-M(\sss,\ttt_n)} I(\sss,\ttt_n) = p^{-N(\sss)} I(\sss,\ttt_n),
\]
where $N(\sss) :=  \# \{u \in \{0,1\}^* : u1 \in \sss\}$.
Thus,
\[
K(\sss,\ttt_n)
=
\begin{cases}
(1-p)^{-L(\sss)} p^{-N(\sss)} I(\sss,\ttt_n), & \text{if $\sss \subseteq \ttt_n$,}\\
0, & \text{otherwise.}
\end{cases}
\]

Suppose that $\# \ttt_n(0) \rightarrow \infty$. For any
$\sss$ such that $1 \in \sss$,  $I(\sss,\ttt_n)$
must be $0$ for all $n$ sufficiently large, 
because the subtree below $0$
in $\sss$ cannot equal the subtree below $0$ in $\ttt_n$
for all $n$.

On the other hand, if $1 \notin \sss$,
then $K(\sss,\ttt_n) = K(\sss,  \tilde{\ttt}_n)$, where
$\tilde{\ttt}_n$ is the tree obtained from $\ttt_n$ by deleting
all vertices $v$ with $1 \le v$.
Consequently, if $\# \ttt_n(0) \rightarrow \infty$, then in order
to check whether $K(\sss,\ttt_n)$ converges for all $\sss \in \SSS$,
it suffices to replace $\ttt_n$ by $\tilde{\ttt}_n$
and restrict consideration to $\sss$ such that 
$1 \notin \sss$.  Moreover,
the limits of $K(\sss,\ttt_n)$ and $K(\sss,\tilde{\ttt}_n)$ are the
same, so the sequences $(\ttt_n)_{n \in \bN}$
and  $(\tilde{\ttt}_n)_{n \in \bN}$
correspond to the same point in
the Doob-Martin compactification.

Now suppose that $\# \ttt_n(0) \not \rightarrow \infty$
(so that $\# \ttt_n(1) \rightarrow \infty$ must hold). 
It is clear that if $K(\sss,\ttt_n)$ converges for all
$\sss \in \SSS$ with $1 \notin \sss$, 
then the sets $\{v \in \ttt_n : 0 \le v\}$ are equal
for all $n$ sufficiently large.

Let $\hat{\ttt}_m$ be the subtree of
$\ttt_m$ obtained by deleting from $\ttt_m$ any vertex $v$
such that $u1 \le v$ for some $u$ with 
$\# \ttt_n(u0) \rightarrow \infty$. 
Applying the above arguments recursively, 
a necessary and sufficient condition
for the sequence $(\ttt_n)_{n \in \bN_0}$
to converge to a point in the Doob-Martin compactification is that
whenever  $ \# \hat{\ttt}_n(u0) \not \rightarrow \infty$ for some $u$,
then the set $\{v \in \hat{\ttt}_n : u0 \le v\}$ are equal for
all $n$ sufficiently large.
Moreover, the sequences
$(\ttt_n)_{n \in \bN_0}$ and $(\hat{\ttt}_n)_{n \in \bN_0}$
converge to the same limit point.

\begin{figure}[htbp]
	\centering
		\includegraphics[width=1.00\textwidth]{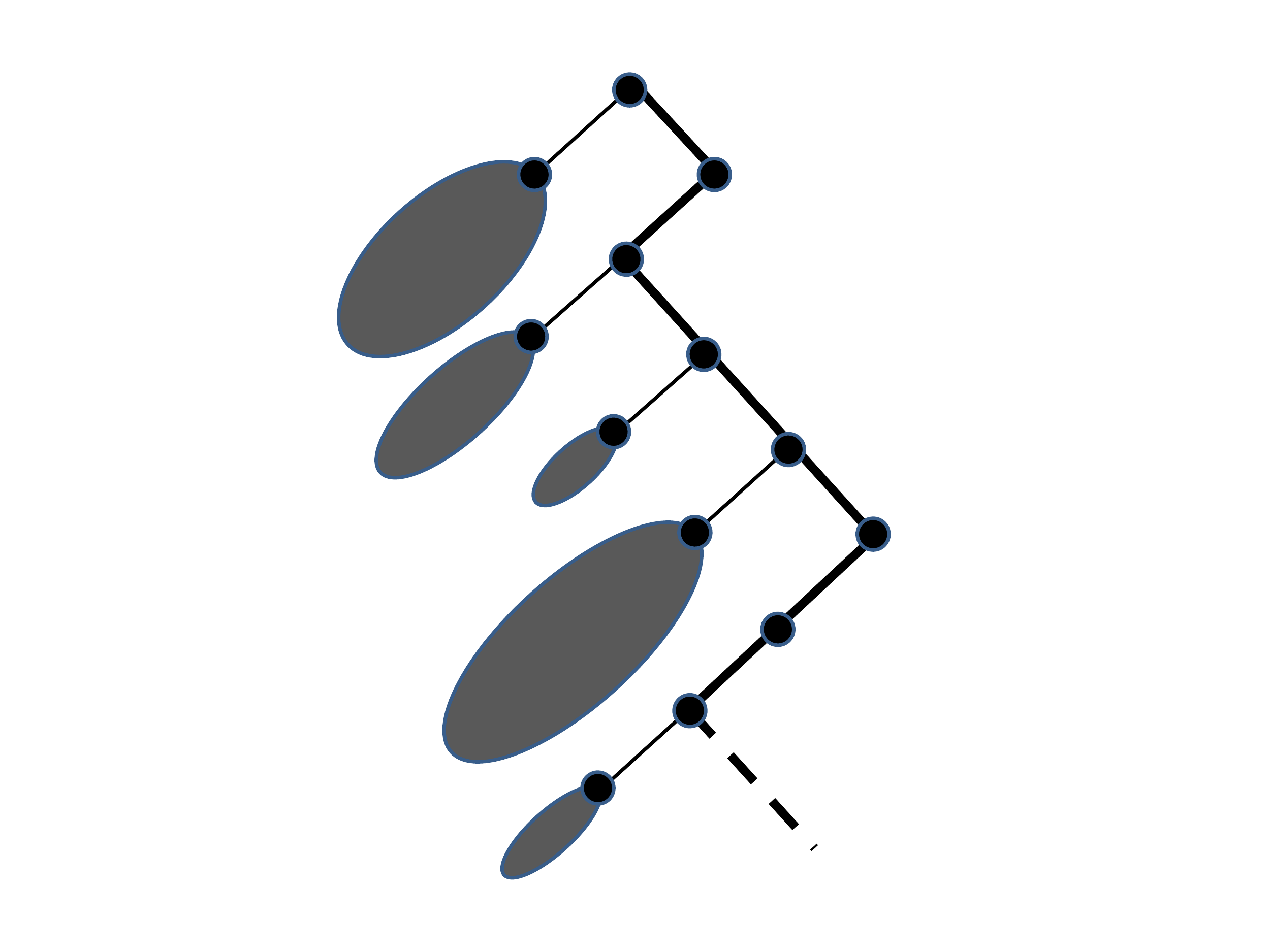}
	\caption{A typical element of the set $\TT$ of infinite rooted binary
	trees with a single infinite spine.  The beginning of the infinite spine is the
	thick line.  The ``blobs'' hanging off the left side of the spine represent
	finite subtrees.  Any vertex that has a ``left'' child
	with infinitely many descendants has no ``right'' child.}
	\label{fig:single_spine_tree}
\end{figure}

Suppose that $(\ttt_n)_{n \in \bN_0}$ and hence 
$(\hat{\ttt}_n)_{n \in \bN_0}$ converges in the 
Doob-Martin compactification.  Set 
\[
\ttt_\infty = \bigcup_{m \in \bN_0} \bigcap_{n \ge m} \hat{\ttt}_n.
\]
Note that $\ttt_\infty$ is an infinite subtree of $\{0,1\}^\star$
containing the root $\emptyset$ and if $\# \ttt_\infty(u0) = \infty$
for some $u \in \{0,1\}^\star$, then $\# \ttt_\infty(u1) = 0$
(that is, $u1 \notin \ttt_\infty$).
Equivalently, there is a unique infinite
path $\emptyset =u_0 \to u_1 \to u_2 \to \ldots$ in $\ttt_\infty$
and this path is such that if $u_n = w_1 \cdots w_{n-1} 0$, then
$w_1 \cdots w_{n-1} 1 \notin \ttt_\infty$.  Let $\TT$
be the set of subtrees with this property.  We can think of a subtree
$\ttt \in \TT$ as consisting of the infinite ``spine'' 
$\emptyset =v_0 \to v_1 \to v_2 \to \ldots$ to which are attached
the finite subtrees $\{v \in \ttt : v_n 0 \le v\}$ 
for those $n \in \bN_0$ such that $v_{n+1} = v_n 1$ -- see Figure~\ref{fig:single_spine_tree}.
 
We have
\[
\lim_{n \rightarrow \infty} K(\sss,\ttt_n)
=
\lim_{n \rightarrow \infty} K(\sss,\hat{\ttt}_n)
=
\begin{cases}
(1-p)^{-L(\sss)} p^{-N(\sss)} I(\sss,\ttt_\infty), & \text{if $\sss \subset \ttt_\infty$,}\\
0, & \text{otherwise,}
\end{cases}
\]
where $I(\sss,\ttt_\infty)$ is defined to be $1$ or $0$
depending on whether or not
for all vertices of the form $u0$ with $u1 \in \sss$
the subtree below $u0$ in $\sss$ is the same
as the subtree below $u0$ in $\ttt_\infty$.

Recall that we write
$|u|$ for the length of a word $u\in \{0,1\}^\star$; that is,
$|u| = k$ when $u=u_1 \ldots u_k$. 
Note that if $\ttt \in \TT$, then the sequence
$(\ttt_n)_{n \in \bN_0}$ in $\SSS$ defined by 
$\ttt_n := \{u \in \ttt : |u| \le n\}$ converges in
the Doob-Martin compactification of $\SSS$ and the tree $\ttt_\infty$
constructed from this sequence is just $\ttt$.

Finally, observe that if we extend $K(\sss,\ttt)$
for $\sss \in \SSS$ and $\ttt \in \TT$ by
\[
K(\sss,\ttt)
:=
\begin{cases}
(1-p)^{-L(\sss)} p^{-N(\sss)} I(\sss,\ttt), & \text{if $\sss \subset \ttt$,}\\
0, & \text{otherwise,}
\end{cases}
\]
then for any distinct $\ttt', \ttt'' \in \TT$ there
exists $\sss \in \SSS$ such that 
$K(\sss,\ttt') \ne K(\sss,\ttt'')$.

The important elements of the above discussion are contained in the
following result.

\begin{theorem}
\label{T:Mallows}
Consider the Mallows tree chain with state space $\SSS$ consisting
of the set of finite rooted binary trees.  Let
$\TT$ be the set of infinite rooted binary trees $\ttt$ 
such that $u1 \in \ttt$ for some $u \in \{0,1\}^\star$
implies $\# \ttt(u0) < \infty$.  
Equip $\SSS \sqcup \TT$ with the topology
generated by the maps $\Pi_n : \SSS \sqcup \TT \to \SSS$, 
$n \in \bN_0$, defined by
$\Pi_n(\ttt) := \{u \in \ttt : |u| \le n\}$, where on
the right we equip the countable set $\SSS$ with the discrete topology. 
The Doob-Martin compactification $\bar \SSS$ is homeomorphic to 
$\SSS \sqcup \TT$, and this homeomorphism identifies
the Doob-Martin boundary $\partial \SSS$ with $\TT$.
\end{theorem}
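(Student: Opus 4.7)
The bulk of the analytical work has already been carried out in the calculations preceding the statement; the plan is to package those computations as (i) the construction of a bijection between $\partial \SSS$ and $\TT$, and (ii) the verification that the bijection is a homeomorphism when $\SSS \sqcup \TT$ is given the $\Pi_n$-generated topology. First, from the Martin kernel formula derived above,
\[
K(\sss,\ttt_n) = (1-p)^{-L(\sss)} p^{-N(\sss)} I(\sss,\ttt_n) \mathbf{1}\{\sss \subseteq \ttt_n\},
\]
and from the iterative pruning argument that produced the sequence $\hat \ttt_n$, every Doob-Martin convergent sequence $(\ttt_n)_{n \in \bN_0}$ in $\SSS$ which does not eventually equal a fixed element of $\SSS$ yields a well-defined increasing-then-stable limit $\ttt_\infty = \bigcup_m \bigcap_{n \ge m} \hat \ttt_n$, and this limit lies in $\TT$ because whenever $\# \ttt_\infty(u0) = \infty$ the pruning step forces $u1 \notin \ttt_\infty$. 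Conversely, for any $\ttt \in \TT$ the truncation sequence $\ttt_n := \Pi_n(\ttt)$ lies in $\SSS$, increases to $\ttt$, and has $\ttt_\infty = \ttt$, so the map $\partial \SSS \to \TT$ sending the Doob-Martin limit of $(\ttt_n)$ to $\ttt_\infty$ is surjective.

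To see injectivity, extend $K$ to $\SSS \times \TT$ by the same formula with $\ttt_n$ replaced by $\ttt \in \TT$. If $\ttt', \ttt'' \in \TT$ are distinct, pick a vertex $u$ where they differ of minimal depth and let $\sss$ be the initial segment of one of them down to depth $|u|+1$ (choosing carefully so that $I(\sss,\ttt') \ne I(\sss,\ttt'')$ or $\sss$ is contained in exactly one of them); then $K(\sss,\ttt') \ne K(\sss,\ttt'')$, so distinct elements of $\TT$ give distinct regular functions on $\SSS$. Combined with the kernel formula and Part (i) of Theorem~\ref{T:main}-style reasoning (convergence of $K(\cdot,\ttt_n)$ together with the recursive pruning determines $\ttt_\infty$ uniquely), this shows that two convergent sequences in $\SSS$ have the same Doob-Martin limit if and only if they produce the same element of $\TT$, giving the desired bijection $\partial \SSS \cong \TT$ and simultaneously identifying the extended Martin kernel with the formula above.

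For the topological assertion, first note that $\SSS \sqcup \TT$ under the $\Pi_n$-topology is Hausdorff and metrizable (the countably many continuous maps $\Pi_n$ into the discrete space $\SSS$ separate points, since $\ttt = \bigcup_n \Pi_n(\ttt)$). A sequence $\ttt^{(k)}$ in $\SSS \sqcup \TT$ converges to $\ttt \in \SSS \sqcup \TT$ under this topology iff for every $n$, $\Pi_n(\ttt^{(k)}) = \Pi_n(\ttt)$ for all sufficiently large $k$. From the formulas for $L$, $N$, and the indicator $I$, each of which depends on $\sss \in \SSS$ only through a finite portion of $\ttt$, such stabilization implies $K(\sss,\ttt^{(k)}) \to K(\sss,\ttt)$ for every $\sss \in \SSS$, i.e.\ Doob-Martin convergence. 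Conversely, Doob-Martin convergence to a point of $\partial \SSS$ forces the pruned trees $\hat \ttt^{(k)}$ to stabilize on every finite truncation (again by the local nature of $I(\sss,\cdot)$), which is precisely $\Pi_n$-convergence. Points of $\SSS$ inside $\SSS \sqcup \TT$ are isolated under the $\Pi_n$-topology (since $\Pi_n(\sss) = \sss$ eventually), matching the discrete embedding of $\SSS$ into $\bar \SSS$. Thus the bijection is a homeomorphism.

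The main technical obstacle is the bookkeeping in the second paragraph: one must check that the recursive pruning really does produce a well-defined element of $\TT$ and that, for any two distinct $\ttt', \ttt'' \in \TT$, a separating test tree $\sss$ can be chosen so that the factor $I(\sss,\cdot)$ and the inclusion $\sss \subseteq \cdot$ distinguish the values of the extended Martin kernel. This requires handling both the case where the spines of $\ttt'$ and $\ttt''$ first diverge and the case where the spines agree but the finite trees hanging off a common right-child $u1$ differ; in each case one can take $\sss$ to be a suitable initial truncation of one of the two trees.
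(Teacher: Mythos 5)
Your first two paragraphs track the paper's own proof, which is exactly the discussion preceding the theorem: the product formula for the kernel, the simplification $p^{-M(\sss,\ttt_n)}I(\sss,\ttt_n)=p^{-N(\sss)}I(\sss,\ttt_n)$, the recursive pruning producing $\hat{\ttt}_n$ and $\ttt_\infty=\bigcup_m\bigcap_{n\ge m}\hat{\ttt}_n\in\TT$, the converse via the truncations of a given $\ttt\in\TT$, and the separation of distinct elements of $\TT$ by the extended kernel. The separating tree does need the care you flag (a bare truncation $\Pi_{|u|+1}(\ttt')$ may have $I(\sss,\ttt')=0$), but because $\ttt'\in\TT$ every left subtree sitting below a right edge of $\ttt'$ is finite, so the truncation can always be enlarged to a finite ``$I$-closed'' test tree; this is at the same level of detail as the paper.

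The genuine gap is in your third paragraph, in the implication ``Doob-Martin convergence $\Rightarrow$ $\Pi_n$-convergence.'' What the pruning argument delivers is that the \emph{pruned} trees $\hat{\ttt}^{(k)}$ stabilize on every finite truncation; it does not deliver $\Pi_m(\ttt^{(k)})=\Pi_m(\ttt_\infty)$ eventually, and that statement is false. Take $\ttt^{(k)}$ to consist of the leftmost and rightmost paths of length $k$ together with the root (the example the paper itself analyzes in Section~\ref{sec:memory}). For every $\sss$ with $1\in\sss$ one has $I(\sss,\ttt^{(k)})=0$ for large $k$ because $\#\ttt^{(k)}(0)\to\infty$, while for $\sss$ an initial segment of the leftmost path $K(\sss,\ttt^{(k)})=(1-p)^{-L(\sss)}$; hence $\ttt^{(k)}$ converges in $\bar\SSS$ to the boundary point $\ttt_\infty$ given by the infinite leftmost path. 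But $\Pi_1(\ttt^{(k)})=\{\emptyset,0,1\}\ne\{\emptyset,0\}=\Pi_1(\ttt_\infty)$ for all $k$, so the sequence does not converge in the initial topology of the maps $\Pi_n$. No appeal to the ``local nature of $I$'' can repair this: the kernel $K(\sss,\cdot)$ is blind to exactly the material that the pruning deletes, so it cannot force the unpruned truncations to stabilize.

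The same example shows that the obstacle is not one of bookkeeping but of substance: $\SSS\sqcup\TT$ with the initial topology of the $\Pi_n$ is metrizable but not compact (the only candidate limit of $(\ttt^{(k)})$, the union of the two infinite paths, lies outside $\SSS\sqcup\TT$ since it has $1$ in the tree and an infinite left subtree at the root), so it cannot be homeomorphic to the compact space $\bar\SSS$ under any identification. The topology that actually matches $\bar\SSS$ is the one induced by the extended kernels --- equivalently, the end topology of the tree of states described in Section~\ref{sec:memory}, in which a neighborhood of $\ttt\in\TT$ is determined by agreement of the pruned (lexicographically completed) initial segments rather than of the plain truncations $\Pi_n(\cdot)$. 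So before your paragraph on the homeomorphism can be completed, the topological assertion itself has to be reformulated in those terms.
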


\begin{remark} 
The limit in the Doob-Martin topology of
the Mallows tree chain $(X_n)_{n \in \bN_0}$
started from the trivial tree $\emptyset$  
is just the $\TT$-valued random variable
$X_\infty := \bigcup_{n \in \bN_0} X_n$ introduced in
Proposition~\ref{P:Mallows_as_trickle}. Almost surely, the 
spine of $X_\infty$ (that is, the unique infinite
path from the root $\emptyset$) 
is equal to the rightmost path $\emptyset\to 1\to 11\to 111\ldots$
in the complete infinite binary tree.
\end{remark}

\begin{remark}
It is straightforward
to check that each of the harmonic functions $K(\cdot, \ttt)$,
$\ttt \in \TT$ is extremal.  If we
order the alphabet $\{0,1\}$ so that $0$ comes before $1$
and equip the set of words $\{0,1\}^\star$ with the 
corresponding lexicographic order, then 
the state space of the $h$-transformed process 
corresponding to an infinite tree
$\ttt \in \TT$ is the set of finite subtrees $\sss$
of $\ttt$ such that if $u \in \sss$, then every predecessor
of $u$ in the lexicographic order also belongs to 
$\sss$.  A realization of the $h$-transformed process started
from $\emptyset$ is the deterministic path that adds the vertices
of $\ttt$ one at a time in increasing lexicographic order.
\end{remark}

\begin{remark}
As in the BST and DST cases, the Mallows tree process can be regarded as a Markov chain 
which moves from a tree $\ttt$ to a tree $\sss$ of the form $\sss= \ttt\sqcup \{v\}$, where 
the new vertex $v$ is an external vertex of $\ttt$ (see the discussion following \eqref{E:BST_prob}). 
This implies that the transition probabilities can be coded by a function $p$ that 
maps pairs $(\ttt,v)$, $\ttt\in\II$ and $v$ an external vertex of $\ttt$, to the probability 
that the chain moves from $\ttt$ to $\ttt\sqcup \{v\}$.

In the BST case one of the $|\ttt|+1$ external vertices of $\ttt$ is chosen
uniformly at random, that is, $p(v|\ttt)=1/(|\ttt|+1)$, whereas we have
$p(v|\ttt)=2^{-|v|}$ in the DST case. For Mallows trees, we 
have the following stochastic mechanism. 
Let $u$ be the vertex of $\ttt$ that is
greatest in the lexicographic order. Denote by $i_1<\cdots<i_\ell$  
the indices at which the corresponding entry of $u$ is
a $0$ (we set $\ell=0$ if every entry of $u$ is a
$1$). Write $v_j$, $1\le j\le \ell$, for the external vertices of $\ttt$ that arise if 
the $0$ in position $i_j$ is changed to $1$. 
Put $v_{\ell+1}:=v1$ and $v_{\ell+2}:=v0$. Then, 
we choose $v_j$ with probability $p^{i_j}$, $j=1,\ldots,\ell$, and $v_{\ell+1}$ and $v_{\ell+1}$ with 
probabilities $rp$ and $r(1-p)$ respectively, where $r:=1-\sum_{j=1}^\ell p^{i_j}$.    

Note that not all Markov chains of the vertex-adding type can be represented as
trickle-down processes. Indeed, a distinguishing feature of the trickle-down chains within 
this larger class is the fact that the restriction of the function $v\to p(v|\ttt)$ 
to the external vertices of the left subtree of $\ttt$ depends on $\ttt$ only via
the number of vertices  in the right subtree of $\ttt$. Similar restrictions hold
with left and right interchanged, and also for the subtrees of non-root vertices.
\end{remark}

\section{$q$-binomial chains}
\label{S:q_chain}

\subsection{$q$-binomial urns}
\label{SS:q_walk}

Fix parameters $0 < q < 1$ and $0 < r < 1$, and 
define a transition matrix $Q$ for the state space
$\bN_0 \times \bN_0$  by
\[
Q((i,j), (i+1,j)) = r q^j
\]
and 
\[
Q((i,j), (i,j+1)) = 1 - r q^j
\]
for $(i,j) \in \bN_0 \times \bN_0$.  
We note that this $2$-parameter family of processes
is a special case of the $3$-parameter family studied
in \cite{MR1452938}, where it is shown to have
a number of interesting connections with graph theory.
In the next subsection,
we use Markov chains with the transition matrix $Q$
as the routing chains for a trickle-down process on $\II = \{0,1\}^\star$
in the same way that we have used the P\'olya and Mallows urn processes.

Note that, by a 
simple Borel-Cantelli argument, almost surely 
any sample path of a Markov
chain $(Y_n)_{n \in \bN_0} = ((Y_n',Y_n''))_{n \in \bN_0}$ 
with transition matrix $Q$ is such that 
$Y_N' = Y_{N+1}' = Y_{N+2}' = \ldots$ for some $N$ (so that $Y_{N+1}'' = Y_N'' +1, \,
Y_{N+2}'' = Y_N'' + 2, \, \ldots$).

We want to compute the probability that the chain goes from $(i,j)$ to $(k,\ell)$
for $i \le k$ and $j \le \ell$.  

Observe that the probability
the chain goes from $(i,j)$ to $(k,\ell)$ via $(k,j)$ is
\[
R((i,j), (k,\ell)) : = (r q^j)^{k-i} (1 - r q^j) (1 - r q^{j+1}) \cdots (1 - r q^{\ell - 1}).
\]

Observe also that if $S(i,j)$ is the probability the chain goes from $(i,j)$ to
$(i+1,j+1)$ via $(i+1,j)$ and $T(i,j)$ is the probability the chain goes from $(i,j)$ to
$(i+1,j+1)$ via $(i,j+1)$, then $T(i,j) = q S(i,j)$.  It follows by repeated applications
of this observation that the probability the chain goes from $(i,j)$ to $(k,\ell)$ along some
``north-east'' lattice path $\sigma$ is
\[
q^{A(\sigma)} R((i,j), (k,\ell)),
\]
where $A(\sigma)$ is the area in the plane above the line segment $[i,k] \times \{j\}$
and below the curve obtained by a piecewise linear interpolation of $\sigma$.
Hence, the probability that the chain hits $(k,\ell)$ starting from $(i,j)$ is
\[
\sum_\sigma q^{A(\sigma)} R((i,j), (k,\ell)),
\]
where the sum is over all ``north-east'' lattice paths $\sigma$ from $(i,j)$ to $(k,\ell)$.

As explained in \cite[Chapter 10]{MR2000g:33001},
the evaluation of the sum is a consequence of the non-commutative 
$q$-binomial theorem of
\cite{MR14:768g} (see also
\cite{MR38:4329}), and
\[
\sum_\sigma q^{A(\sigma)}
=
\frac
{(1-q)(1-q^2) \cdots (1 - q^{(k-i)+(\ell-j)})}
{(1-q)(1-q^2) \cdots (1 - q^{(k-i)}) \times (1-q)(1-q^2) \cdots (1 - q^{(\ell-j)})}.
\]

Taking, as usual, $(0,0)$ as the reference state, the Martin kernel for the 
chain is thus
\[
\begin{split}
K((i,j),(k,\ell))
& =
\frac
{(1-q^{k-i+1}) \cdots (1-q^k) \times (1-q^{\ell-j+1}) \cdots (1-q^\ell)}
{ (1 - q^{(k-i)+(\ell-j)+1}) \cdots (1 - q^{k+\ell})} \\
& \quad \times r^{-i} q^{j(k-i)}
\frac
{1}
{(1 - r)(1 - r q) \cdots (1 - r q^{j-1})},
\end{split}
\]
for $i \le k$ and $j \le \ell$ (and $0$ otherwise).

The Doob-Martin compactification of a chain
with transition matrix $Q$ is identified in \cite[Section 4]{MR2529787},
but for the sake of completeness we present the straightforward 
computations.  If $((k_n,\ell_n))_{n \in \bN_0}$ is a sequence such
that $k_n + \ell_n \rightarrow \infty$, 
then, in order for $K((i,j),(k_n,\ell_n))$
to converge, we must have either that  $k_n = k_\infty$ for
some $k_\infty$ for all $n$ sufficiently large
and $\ell_n \rightarrow \infty$, in which case the limit is
\[
(1-q^{k_\infty-i+1}) \cdots (1-q^{k_\infty})
\times r^{-i} q^{j(k_\infty-i)}
\frac
{1}
{(1 - r)(1 - r q) \cdots (1 - r q^{j-1})}
\]
for $i \le k_\infty$ (and $0$ otherwise), or that $k_n \rightarrow \infty$
with no restriction on $\ell_n$, in which case the limit is
\[
\begin{cases}
r^{-i}, & \text{if $j=0$},\\
0, & \text{otherwise}.\\
\end{cases}
\]
Consequently, the Doob-Martin compactification 
$\overline{\bN_0 \times \bN_0}$ of the state space
is such that 
$\partial (\bN_0 \times \bN_0) := 
\overline{\bN_0 \times \bN_0} \setminus \bN_0 \times \bN_0$
can be identified with $\bN_0 \cup \{\infty\}$, the usual
one-point compactification of $\bN_0$.

With this identification, the
$h$-transformed process corresponding to the boundary point $k \in \bN_0$
has state space $\{0,\ldots,k\} \times \bN_0$,
and transition probabilities 
\[
Q^h((i,j),(i+1,j)) = (1 - q^{k-i}), \quad i < k,
\]
\[
Q^h((i,j),(i,j+1)) = q^{k-i}, \quad i < k,
\]
and
\[
Q^h((k,j),(k,j+1)) = 1.
\]
Similarly, the
$h$-transformed process corresponding to the boundary point $\infty$
has state space $\bN_0 \times \{0\}$
and transition probabilities
\[
Q^h((i,0),(i+1,0)) = 1.
\]

\subsection{$q$-binomial trees}
\label{SS:q_tree}

Suppose that we apply the trickle-down construction with $\II = \{0,1\}^\star$
and all of the routing chains given by the $q$-binomial urn of 
Subsection~\ref{SS:q_walk}, in the same manner that the BST process
and the Mallows tree process were built from the P\'olya urn and the
Mallows urn, respectively.  Just as for the latter two processes, we may
identify the state space $\SSS$ with the set of finite subtrees of
$\{0,1\}^\star$ that contain the root $\emptyset$.  We call
the resulting tree-valued Markov chain the {\em $q$-binomial tree process}.

Recalling Theorem~\ref{T:Mallows}
and comparing the conclusions
of Subsection~\ref{SS:q_walk} with those of 
Subsection~\ref{SS:Mallows_urn_compact}, 
the following result should come as no surprise.
We leave the details to the reader.

\begin{theorem}
\label{T:q-binomial}
Consider the $q$-binomial tree chain with state space $\SSS$ consisting
of the set of finite rooted binary trees.  Let
$\TT$ be the set of infinite rooted binary trees $\ttt$ 
such that $u1 \in \ttt$ for some $u \in \{0,1\}^\star$
implies $\# \ttt(u0) < \infty$.  
Equip $\SSS \sqcup \TT$ with the topology
generated by the maps $\Pi_n : \SSS \sqcup \TT \to \SSS$, 
$n \in \bN_0$, defined by
$\Pi_n(\ttt) := \{u \in \ttt : |u| \le n\}$, where on
the right we equip the countable set $\SSS$ with the discrete topology. 
The Doob-Martin compactification $\bar \SSS$ is homeomorphic to 
$\SSS \sqcup \TT$, and this homeomorphism identifies
the Doob-Martin boundary $\partial \SSS$ with $\TT$.
Moreover, each boundary
point is extremal.  
\end{theorem}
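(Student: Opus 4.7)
The plan is to follow the template of the Mallows chain analysis carried out in Section~\ref{SS:Mallows_tree_compact} and adapt it to the $q$-binomial setting, which is possible because the Doob-Martin compactification of the $q$-binomial urn (computed in Section~\ref{SS:q_walk}) has the same qualitative structure as that of the Mallows urn: the boundary is $\bN_0 \cup \{\infty\}$, with finite boundary points $k$ corresponding to the first coordinate freezing at $k$ while the second grows, and $\infty$ corresponding to the first coordinate growing unboundedly. Note that Theorem~\ref{T:main} is \emph{not} directly applicable, because the boundary points of the routing chain are not distinguished by limiting proportions; instead, the analysis must be carried out by hand as in the Mallows case.

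First, I would apply Corollary~\ref{C:Martin_kernel_is_product} to write $K(\sss,\ttt) = \prod_{u \in \II} K^u(\sss^u, \ttt^u)$, where each $K^u$ is the explicit $q$-binomial urn Martin kernel from Section~\ref{SS:q_walk} and the routing chain state at $u$ is identified with $(\#\ttt(u0), \#\ttt(u1))$ under the tree/subtree correspondence of Example~\ref{E:dag_tree}; only finitely many factors differ from $1$ since $\sss$ is finite. Next, for a sequence $(\ttt_n)$ with $\#\ttt_n \to \infty$, I would use the convergence classification from Section~\ref{SS:q_walk}: at each vertex $u$, either $\#\ttt_n(u0) \to \infty$, forcing $K^u(\xi,\cdot)$ to vanish in the limit whenever $\xi^{u1} \ge 1$, or $\#\ttt_n(u0)$ stabilizes at some $k_u$ while $\#\ttt_n(u1) \to \infty$, giving an explicit nonzero limit. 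The recursive pruning argument of Section~\ref{SS:Mallows_tree_compact} then applies verbatim: delete from $\ttt_n$ every vertex $v$ with $u1 \le v$ for some $u$ where $\#\ttt_n(u0) \to \infty$ (such vertices carry no kernel information in the limit), obtain the pruned sequence $\hat\ttt_n$ with the same Doob-Martin limit, and iterate. The necessary and sufficient condition for convergence becomes level-wise stabilization of $\hat\ttt_n$, and the limit $\ttt_\infty := \bigcup_m \bigcap_{n \ge m} \hat\ttt_n$ is an element of $\TT$, with distinct elements of $\TT$ separated by test sets $\sss$.

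The homeomorphism statement in Step~4 follows because the topology on $\SSS \sqcup \TT$ generated by the truncation maps $\Pi_n$ is precisely the topology of level-wise stabilization, which the explicit kernel analysis above has shown to coincide with Doob-Martin convergence. For extremality (Step~5), I would describe the $h$-transform corresponding to $h = K(\cdot,\ttt)$ vertex by vertex: by the computations at the end of Section~\ref{SS:q_walk}, the $h$-transformed routing chain at $u \in \ttt$ either (a)~increments the first coordinate deterministically when the spine of $\ttt$ passes through $u0$, or (b)~executes the chain with transitions $(i,j)\to(i+1,j)$ of probability $1-q^{k-i}$ and $(i,j)\to(i,j+1)$ of probability $q^{k-i}$ for $i<k := \#\ttt(u0)$, followed by deterministic upward motion from $(k,j)$. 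In case~(b), a Borel--Cantelli estimate using $\prod_{i<k}(1 - q^{k-i}) > 0$ shows the first coordinate almost surely reaches $k$ in finite time. By independence of the routing chains across vertices, the $h$-transformed tree chain almost surely produces a sequence whose increasing union is exactly $\ttt$, so its Doob-Martin limit is almost surely the single point $\ttt$, and extremality follows from the general theory recalled in Section~\ref{S:Martin_general}.

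The main obstacle is extremality: unlike in the Mallows case, where the $h$-transformed routing chains are entirely deterministic, the $q$-binomial $h$-transformed chains at finite boundary points are genuinely random, so one must control countably many independent nontrivial Borel--Cantelli events simultaneously, and argue that for \emph{every} vertex $u \in \ttt$ the routing chain produces the correct subtree structure below $u$. This is routine once set up, but requires some care to match the almost-sure local convergence at each vertex with global Doob-Martin convergence of the tree chain; a convenient route is to verify directly that $\#X_n^{(h)}(u)/\#X_n^{(h)} \to $ the appropriate ratios along the spine, or more straightforwardly to argue that every finite subtree of $\ttt$ is eventually contained in $X_n^{(h)}$.
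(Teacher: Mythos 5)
Your proposal is correct and follows exactly the route the paper intends: the paper's own ``proof'' of this theorem is the single remark that one should compare the conclusions of Subsection~7.1 with those of Subsection~6.2 and imitate the Mallows tree argument, with the details left to the reader, and what you have written is precisely those details, including the extremality claim, which the paper does not sketch at all for this theorem. One small correction to your Step~3: the pruning step does not apply quite ``verbatim,'' because for the $q$-binomial urn the kernel
$K^u((i,0),(k,\ell)) = r^{-i}\,(1-q^{k-i+1})\cdots(1-q^{k})\big/\big[(1-q^{k-i+\ell+1})\cdots(1-q^{k+\ell})\big]$
genuinely depends on $\ell$, so the exact identity $K(\sss,\ttt_n)=K(\sss,\tilde{\ttt}_n)$ exploited in the Mallows case fails here; however, every factor above tends to $1$ as $k\to\infty$ uniformly in $\ell$, so $\lim_n K(\sss,\ttt_n)=\lim_n K(\sss,\tilde{\ttt}_n)$ whenever either limit exists, which is all the recursive argument needs. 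Your treatment of extremality is sound and is the genuinely new ingredient relative to Section~6: since $h_{\ttt}(\sss)>0$ forces $\sss\subseteq\ttt$ (the factor at a vertex $u$ with $\#\ttt(u0)=\infty$ kills any $\sss$ with $u1\in\sss$, and the interior-point factors bound the subtree sizes), it suffices to show that every vertex of $\ttt$ is a.s.\ eventually occupied, and your vertex-by-vertex description of the $h$-transformed routing chains (deterministic at boundary point $\infty$, geometrically terminating at finite boundary points $k$, and conditioned to the correct endpoint at interior points) delivers exactly that.
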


\section{Chains with perfect memory}
\label{sec:memory} 

Recall the Mallows urn model of Subsection~\ref{SS:Mallows_urn_compact}
and the $q$-binomial urn model of Subsection~\ref{SS:q_walk}. 
These Markov chains
have the interesting feature that if we know the state of the chain at some time, then we know
the whole path of the process up to that time.  In this section we examine the Doob-Martin
compactifications of such chains with a view towards re-deriving the results of
Subsection~\ref{SS:Mallows_urn_compact} and Subsection~\ref{SS:q_walk}
in a general context. We also analyze a 
trickle-down process resulting from a composition-valued Markov chain.

We return to the notation of Section~\ref{S:Martin_general}: $X=(X_n)_{n\in\bN_0}$ is a
transient Markov chain with countable state space $E$, transition matrix $P$ and reference
state $e \in E$ such that
\[
   \rho(j) :=  \bP^e\{\text{$X$ hits $j$}\} > 0, \quad \text{for all $j\in  E$}.
\]

We suppose that the chain $X$ has {\em perfect memory}, by which we mean that the sets 
\[
E_n := \{j \in E : \bP^e\{X_n = j\} > 0\}, \quad n \in \bN_0,
\]
are disjoint, and that there
is a map $f:E\setminus\{e\} \to E$ with the property that
\[
\bP^e\{f(X_n) = X_{n-1}\}=1, \quad \text{for all } n\in\bN.
\]
Note that this implies that the tail $\sigma$-field associated with the process $X$ is the
same as the $\sigma$-field $\sigma(\{X_n:\, n\in\bN_0\})$ generated by the full collection 
of variables of the process.

Suppose that we construct a directed graph $T$ that has $E$ as its set of vertices and
contains a directed edge $(i,j)$ if and only if $P(i,j) > 0$.  By the assumption on $e$,
for any $j \in E_n$, $n \in \bN$, there is a directed path $e = i_0 \to \ldots \to i_n = j$.
Also, it follows from the perfect memory assumption
that a directed edge $(i,j)$ must have $i \in E_n$ and $j \in E_{n+1}$ for some $n$.  Moreover, if
$(i,j)$ is such a directed edge, then there is no $h \in E_n$ for which $(h,j)$ is also a 
directed edge.
Combining these observations, we see that the directed graph $T$ is a rooted tree with root $e$.  The function $f$ is simply the
map that assigns to any vertex $j \in E\setminus\{e\}$ its parent.
For $j \in E_n$, $n \in \bN$, the unique directed path from $e$ to $j$ is
$e = f^n(j) \to f^{n-1}(j) \to \ldots \to f(j) \to j$.

Suppose from now on that the tree $T$ is {\em locally finite}; that is, for each $i \in E$,
there are only finitely many $j \in E$ with $P(i,j)>0$.

As usual, we define a partial order $\le$ on $T$ ($=E$) by declaring that $i \le j$ if $i$
appears on the unique directed path from the root $e$ to $j$.

We now recall the definition of the {\em end compactification} of $T$.  This object can be defined
in a manner reminiscent of the definition of the Doob-Martin compactification as follows.  We
map $T$ injectively into the space $\bR^T$ of real-valued functions on $T$ via the map that takes
$j \in T$ to the indicator function of the set $\{i \in T : i \le j\}$.  The closure of the image
of $T$ is a compact subset of $\bR^T$.  We identify $T$ with its image and write $\bar T$ for the
closure.  The compact space $\bar T$ is metrizable and a sequence $(j_n)_{n \in \bN}$ from $T$
converges in $\bar T$ if and only if ${\mathbf 1}_{\{i \le j_n\}}$ converges for all $i \in T$, where
${\mathbf 1}_{\{i \le \cdot\}}$ is the indicator function of the set $\{j \in T : i \le j\}$.  
The boundary $\partial T := \bar T \backslash T$ can be identified with the infinite 
directed paths from the root $e$.
We can extend the function ${\mathbf 1}_{\{i \le \cdot\}}$ continuously to $\bar T$.  We can
also extend the
partial order $\le$ to $\bar T$ by declaring that 
$\xi \not \le \zeta$ for any $\xi \ne \zeta \in \partial T$
and $i \le \xi$ for $\xi \in \partial T$ if and only
if ${\mathbf 1}_{\{i \le \xi\}} = 1$.

%\begin{remark}
%\label{R:end_compact_via_metric}
%Given $i,j\in T$, denote by $i \wedge j \in T$ the most recent
%common ancestor of $i$ and $j$;  that is, $i \wedge j = k$,
%where $k \le i$ and $k \le j$,
%and if $\ell$ is any other vertex with this property, then
%$\ell \le k$.
%It is well-known and easy to check
%that the end compactification of the tree $T$ 
%can also be identified with the completion of the metric space 
%$(T,d)$, where the metric $d$ is defined as follows.
%For $k \in T$, put $|k|=n$ if the unique directed path in $T$ from the
%root to $k$ passes through $n+1$ vertices.
%  Set
%\[
%   d(i,j) := \exp(-|i\wedge j|), \quad i,j\in T.
%\]
%\end{remark}

\begin{theorem}
\label{T:perfect_memory}
Let $X$ be a chain with state space $E$, reference state $e$, 
perfect memory, and locally finite associated tree $T$. 
Then, the associated Martin kernel is
given by
\[
     K(i,j) = 
     \begin{cases} 
     \rho(i)^{-1}, &\text{if $i \le j$,}\\ 
     0, &\text{otherwise,}
     \end{cases} \quad \text{ for } i,j\in E.
\]
The Doob-Martin compactification of $E$ is homeomorphic to the end 
compactification of $T$. 
The extended Martin kernel is given by
\[
     K(i,\zeta) =
     \begin{cases} 
     \rho(i)^{-1}, &\text{if  $i \le \zeta$,}\\ 
      0, &\text{otherwise,}
      \end{cases}
      \quad \text{ for } i\in E, \, \zeta\in \partial E \cong \partial T.
\]
\end{theorem}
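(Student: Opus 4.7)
The plan is to proceed in four steps: compute the Martin kernel explicitly using the tree structure induced by perfect memory, characterise convergence in the Doob-Martin topology, identify the boundary with the ends of $T$, and finally read off the extended kernel.

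First I would observe that perfect memory forces $X_n\in E_n$ with $f(X_n)=X_{n-1}$, so the chain's sample paths strictly increase in the tree order on $T$. In particular, the chain visits each $j\in E_n$ at most once (confirming transience and giving $G(i,j)=\bP^i\{X\text{ hits }j\}$) and, starting from any $i\in E$, the chain can only reach states $k$ with $i\le k$. Hence $\bP^i\{X\text{ hits }j\}=0$ whenever $i\not\le j$. When $i\le j$, every trajectory from $e$ that hits $j$ must first pass through $i$, so by the strong Markov property applied at the first visit to $i$,
\[
\rho(j)=\bP^e\{X\text{ hits }i\}\,\bP^i\{X\text{ hits }j\}=\rho(i)\,\bP^i\{X\text{ hits }j\}.
\]
Since $\rho(i)>0$, this yields $K(i,j)=\rho(i)^{-1}\mathbf{1}_{\{i\le j\}}$, which is the first claim.

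Next I would characterise convergence. Because $K(i,\cdot)$ takes only the two values $0$ and $\rho(i)^{-1}$, a sequence $(j_n)$ in $E$ without a constant subsequence has $K(i,j_n)$ convergent for every $i$ if and only if $\mathbf{1}_{\{i\le j_n\}}$ is eventually constant for every $i$, which is precisely the criterion for convergence in the end compactification $\bar T$. The generating families of continuous functions
\[
K(i,\cdot)=\rho(i)^{-1}\mathbf{1}_{\{i\le\cdot\}},\quad i\in E,
\]
coincide up to positive scalars, so the Doob-Martin topology on $E$ and the trace on $E$ of the end-compactification topology are identical. As both are metrisable compactifications of $E$ agreeing on the dense subset, the identity on $E$ extends uniquely to a homeomorphism $\bar E\cong\bar T$, carrying $\partial E$ bijectively onto the set of infinite directed paths from $e$. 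For $\zeta\in\partial T$ identified with a path $e=i_0\to i_1\to\cdots$, the sequence $(i_n)$ converges to $\zeta$ in both compactifications, so by continuity of the extension $K(i,\zeta)=\lim_n K(i,i_n)=\rho(i)^{-1}\mathbf{1}_{\{i\le\zeta\}}$, where the extended order satisfies $i\le\zeta$ iff $i$ lies on this path.

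The main obstacle, such as it is, lies in checking that the two families of generating functions really do produce the same compactification rather than, say, the end compactification being a proper quotient or refinement; but since $K(i,\cdot)$ and $\mathbf{1}_{\{i\le\cdot\}}$ differ only by a nonzero multiplicative constant and the family $\{\mathbf{1}_{\{i\le\cdot\}}:i\in E\}$ visibly separates points of $\bar T$ (local finiteness of $T$ guarantees no collapsing), this reduces to a routine topological argument. Everything else is essentially tautological once perfect memory is invoked to trivialise the hitting probabilities.
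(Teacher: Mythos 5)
Your proposal is correct and follows essentially the same route as the paper: the Martin kernel is computed by factoring $\bP^e\{X \text{ hits } j\}$ through the obligatory visit to $i$ via the strong Markov property, and the identification with the end compactification is obtained by observing that the generating families $K(i,\cdot)$ and ${\mathbf 1}_{\{i \le \cdot\}}$ differ only by the positive constants $\rho(i)^{-1}$. The extra detail you supply on two-valuedness of $K(i,\cdot)$ and on reading off the extended kernel along a path is a harmless elaboration of what the paper leaves implicit.
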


\begin{proof}
By definition,
\[ 
   K(i,j) = \frac{\bP^i\{\text{$X$ hits $j$}\}}{\bP^e\{\text{$X$ hits $j$}\}}.
\]  
By assumption, the numerator is $0$ unless $i \le j$.  If $i \le j$, 
then the denominator is 
\[
\bP^e\{\text{$X$ hits $j$}\} 
= \bP^e\{\text{$X$ hits $i$}\} \, \bP^i\{\text{$X$ hits $j$}\}
\]
and the claimed formula for the Doob-Martin kernel follows.

The remainder of the proof is immediate from the  observation that the manner
in which the end compactification is constructed from the functions 
${\mathbf 1}_{\{i \le \cdot\}}$, $i \in E$,
is identical to the manner in which the Doob-Martin compactification is 
constructed from the
functions $K(i,\cdot) = \rho(i)^{-1} {\mathbf 1}_{\{i \le \cdot\}}$, $i \in E$.
\end{proof}

\begin{example} 
The Mallows urns process satisfies the conditions of Theorem~\ref{T:perfect_memory}. 
The tree $T$ has $\bN_0^2$ as its set of vertices, and directed edges
of the form $((i,0),(i+1,0))$ and $((i,j),(i,j+1))$, $i,j\in\bN_0$.
The perfect memory property survives the lift from urn to tree. 
The ``parenthood'' function $f$ 
takes a tree $\ttt$ in the state space of the Mallows tree process
and simply removes the vertex of $\ttt$ that is greatest
in the lexicographic order.

This description of the  state space of the Mallows tree
process as a ``tree-of-trees'' also makes
its Doob-Martin compactification easier to understand. 
 We know from Section~\ref{SS:Mallows_tree_compact} that
points in the Doob-Martin boundary can be identified
with rooted binary trees with a single infinite path --
the ``spine'' -- with nothing dangling off to the right of the spine. 
It is, of course, easy to construct a
sequence of finite rooted binary trees that tries to grow more than one infinite
path: for example, let $\ttt_n$ be the
tree that consists of the two vertices 
$00 \ldots 0, \, 11 \ldots 1 \in\{0,1\}^n$ and the vertices
in $\{0,1\}^\star$ on the directed paths connecting them to the
root $\emptyset$. The sequence $(\ttt_n)_{n\in\bN}$ must have a 
subsequence with a limit point in the compact space
$\bar \SSS$ or, equivalently, it must have a subsequence that
converges to a limit in the end compactification $\bar T$ of the tree $T$.
From the above description of the parenthood function $f$, we 
see for a tree $\sss \in T$ that $\sss \le \ttt_n$ if and only if 
one of the following three conditions hold:
\begin{itemize}
\item
$\sss$ consists of the two vertices $00 \ldots 0 \in\{0,1\}^n$
and $11 \ldots 1\in\{0,1\}^m$ for some $m \le n$ and their
prefixes in $\{0,1\}^\star$;
\item
$\sss$ consists of the vertex  $00\ldots 0 \in\{0,1\}^m$
for some $m \le n$ and its prefixes in $\{0,1\}^\star$;
\item
$\sss$ consists of the single vertex $\emptyset \in \{0,1\}^\star$.
\end{itemize}
It follows that $\sss \le \ttt_n$ for all $n$ sufficiently large
if and only if $\sss$ is the tree consisting of some element of
$\{0\}^\star$ and its prefixes in $\{0,1\}^\star$.
Thus, $\ttt_n$ converges in the end compactification of $T$ to 
$\ttt_\infty \in \bar T \setminus T$ as $n\to\infty$, where we can
regard $\ttt_\infty$ as the single infinite path tree
consisting solely of the infinite spine
$\emptyset \to 0 \to 00 \to \ldots$. 

We note that  the sequence $(\ttt_n)_{n\in\bN}$ 
of finite rooted binary trees 
converges even in the Doob-Martin compactification 
of the binary search tree process to a point in the boundary.
Indeed (see the first paragraph of Section~\ref{S:BST_and_DST}), we can identify this latter point
with the probability measure on $\{0,1\}^\infty$
that puts mass $\frac{1}{2}$ at each of the points
$00\ldots$ and $11\ldots$.
\end{example}

\begin{example}\label{E:composition} 
A {\em composition} of an integer $n\in\bN$ is an element $c =(c_1 ,\ldots,c_k)$ of $\bN^\star$ 
with the property that $\sum_{i=1}^k c_i=n$. We recall the standard proof of the fact 
that there are $2^{n-1}$ such compositions for a given $n$: one thinks of placing 
$n$ balls on a string and defines a composition 
by placing separators into some of the $n-1$ gaps between the balls. A combinatorially 
equivalent bijection arises from deleting the last of these balls, labeling the balls 
to the left of each separator by 1 and labeling the remaining balls by $0$. 
We can now construct a Markov chain $(X_n)_{n\in\bN}$ such that $X_n$ is uniformly
distributed on the set of compositions of $n$ and $X_n$ is a prefix of $X_{n+1}$ for 
all $n\in\bN$:
the state space is $E=\{0,1\}^\star$ and the allowed transitions are of the form
\[
   (u_1,\ldots,u_{n-1})\to (u_1,\ldots,u_{n-1},1),
                     \quad (u_1,\ldots,u_{n-1})\to (u_1,\ldots,u_{n-1},0),
\]
both with probability $1/2$. Here $X_1=\emptyset$ represents the only composition $1=1$ of
$n=1$. Attaching the digit $1$ to the state representing a composition of $n$ means that the
new composition, now of $n+1$, has an additional summand of size $1$ at the end, whereas
adding $0$ corresponds to increasing the last summand of the old composition by $1$.
A construction of this type, which relates random compositions to samples from a 
geometric distribution, has been used in~\cite{MR1871561} -- see also the references 
given there.

The chain $(X_n)_{n\in\bN}$ certainly has the perfect memory property and the associated tree
$T$ is just the complete rooted binary tree structure on $\{0,1\}^\star$ from the
Introduction.  It follows from Theorem~\ref{T:perfect_memory} that the Doob-Martin
compactification is homeomorphic to $\{0,1\}^\star \sqcup \{0,1\}^\infty$, the end
compactification of $\{0,1\}^\star$. 

Note that we can also think of the chain $(X_n)_{n \in \bN}$ as a result of the trickle-down
construction in which the underlying directed acyclic graph $\II$ is the complete rooted
binary tree, the routing instruction chains all have state space $\{(0,0)\} \sqcup (\bN \times
\{0\}) \sqcup (\{0\} \times \bN)$, and transition matrices are all of the form
\[
   \begin{split}
      Q((0,0),(1,0)) & = \frac{1}{2}, \\
      Q((0,0),(0,1)) & = \frac{1}{2}, \\
      Q((i,0),(i+1,0)) & = 1, \; i \ge 1, \\
      Q((0,j),(0,j+1)) & = 1, \; j \ge 1. 
   \end{split}
\]
The chain is of the single trail type described in Example~\ref{E:trail}. For processes
of this type there are usually several possibilities for the underlying directed graph; here we
may take $\II=\bN_0 \times \bN_0$ 
instead of the complete rooted binary tree if we interpret
appending $0$ as a move to the right and appending $1$ as a move up.
\end{example}

\begin{remark}
For several of the chains $(X_n)_{n\in\bN_0}$ that we have considered in the previous
sections there is
a ``background chain'' $(\tilde X_n)_{n\in\bN_0}$ with the perfect memory property in the
sense that there is a function $\Psi:\tilde S\to S$ with $X_n=\Psi(\tilde X_n)$ for
all $n\in\bN$, where $S$ and $\tilde S$ are the respective state spaces. 
For example, random recursive trees are often considered together with their labels
and are then of the perfect memory type -- see Figure~\ref{fig:recursive_tree}. 

Conversely, we can always extend the state space $S$ of a given chain by including the
previous states, taking the new state space $\tilde S$ to be the set of words from the
alphabet $S$, to obtain a background chain of the perfect memory type. For example, the
P\'olya urn then leads to a single trail chain in the sense of Example~\ref{E:trail}, with underlying
directed graph $\bN\times\bN$ and transitions $ Q((i,j),(i+1,j)) = i/(i+j)$ and $Q((i,j),(i,j+1)) =
j/(i+j)$.
\end{remark}

\section{Another approach to tail $\sigma$-fields}
\label{S:freezing}

As mentioned in the Introduction, our initial motivation for
studying the Doob-Martin compactifications of various trickle-down chains
was to understand the chains' tail $\sigma$-fields.  Determining
the compactification requires a certain amount of knowledge about
the hitting probabilities of a chain, and this information may
not always be easy to come by.  In this section we consider
a family of trickle-down chains for which it is possible
to describe the tail $\sigma$-field directly without recourse
to the more extensive information provided by the
Doob-Martin compactification.  The class of processes to which
this approach applies includes the Mallows tree and $q$-binomial
tree process that we have already analyzed, as well as the
Catalan tree process of Section~\ref{S:Catalan_trees} below that we are unable
to treat with Doob-Martin compactification methods.

We begin with a lemma that complements a 
result from \cite{MR699981} on exchanging the order of 
taking suprema 
and intersections of $\sigma $-fields.

\begin{lemma}
\label{L:commuting_sigma-fields}
Suppose that on a probability space $(\Omega, \cF, \bP)$ there is
a collection of independent sub-$\sigma$-fields $\cH_m$, $m \in \bN_0$,
and another collection of sub-$\sigma$-fields 
$\cG_{m,n}$, $m, n \in \bN_0$, with the properties
\[
\cG_{0,n} \subseteq \cG_{1 ,n} \subseteq \ldots, \quad \text{for all $n \in \bN_0$},
\]
\[
\cG_{m,0} \supseteq \cG_{m ,1} \supseteq \ldots, \quad \text{for all $m \in \bN_0$},
\]
\[
\cG_{0,0} \subseteq \cH_0,
\]
and
\[
\cG_{m+1,n} \subseteq \cG_{m,n} \vee \cH_{m+1}, \quad \text{for all $m,n \in \bN_0$}.
\]
Then, the two sub-$\sigma$-fields 
$\bigvee_{m \in \bN_0} \bigcap_{n \in \bN_0} \cG_{m,n}$ 
and 
$\bigcap_{n \in \bN_0} \bigvee_{m \in \bN_0} \cG_{m,n}$
are equal up to null sets.
\end{lemma}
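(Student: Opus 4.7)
The $\supseteq$ inclusion is trivial by monotonicity: $\bigcap_{n'} \cG_{m,n'} \subseteq \cG_{m,n} \subseteq \bigvee_{m'} \cG_{m',n}$ for every $m$ and $n$, so intersecting over $n$ and joining over $m$ yields $\bigvee_m \bigcap_n \cG_{m,n} \subseteq \bigcap_n \bigvee_m \cG_{m,n}$. I concentrate on the reverse inclusion. Iterating the hypothesis $\cG_{m+1,n} \subseteq \cG_{m,n} \vee \cH_{m+1}$ and using $\cG_{0,n} \subseteq \cG_{0,0} \subseteq \cH_0$ yields $\cG_{m,n} \subseteq \cH_{(m]} := \cH_0 \vee \cdots \vee \cH_m$ for all $m,n$, so every $\bigcap_n \bigvee_m \cG_{m,n}$-measurable random variable is $\cH_{(\infty]}$-measurable, where $\cH_{(\infty]} := \bigvee_m \cH_m$. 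Writing $\cI_m := \bigcap_n \cG_{m,n}$, the aim is to show that any bounded $Z$ measurable with respect to $\bigcap_n \bigvee_m \cG_{m,n}$ is in fact $\bigvee_m \cI_m$-measurable modulo null sets.

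The crux is the identity
\[
\bE[Z \mid \cH_{(m]}] \;=\; \bE[Z \mid \cG_{m,n}] \quad \text{almost surely, for every } m,n,
\]
whose proof I defer. Granting it, the right-hand side exhibits $\bE[Z \mid \cH_{(m]}]$ as $\cG_{m,n}$-measurable for every $n$, hence as $\cI_m$-measurable. Because $Z$ is $\cH_{(\infty]}$-measurable and $(\cH_{(m]})_{m}$ is increasing with join $\cH_{(\infty]}$, Doob's martingale convergence theorem gives $\bE[Z \mid \cH_{(m]}] \to Z$ almost surely as $m \to \infty$, presenting $Z$ as an almost sure limit of $\bigvee_m \cI_m$-measurable variables, which yields $\bigcap_n \bigvee_m \cG_{m,n} \subseteq \bigvee_m \cI_m$ up to null sets.

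To prove the identity, fix $m, n$ and iterate the hypothesis to obtain $\cG_{m',n} \subseteq \cG_{m,n} \vee \cH_{(m,m']}$ for any $m' \ge m$, where $\cH_{(m,m']} := \cH_{m+1} \vee \cdots \vee \cH_{m'}$. The independence of the $\cH_j$ together with $\cG_{m,n} \subseteq \cH_{(m]}$ makes $\cH_{(m,m']}$ independent of $\cH_{(m]}$; a standard monotone-class argument then shows that for any bounded $\cG_{m',n}$-measurable random variable $W$,
\[
\bE[W \mid \cH_{(m]}] \;=\; \bE[W \mid \cG_{m,n}].
\]
Applied to $W := \bE[Z \mid \cG_{m',n}]$, combined with the tower property (which reduces the right-hand side to $\bE[Z \mid \cG_{m,n}]$ since $\cG_{m,n} \subseteq \cG_{m',n}$), this gives $\bE\bigl[\bE[Z \mid \cG_{m',n}] \bigm| \cH_{(m]}\bigr] = \bE[Z \mid \cG_{m,n}]$; letting $m' \to \infty$ and using forward martingale convergence on $(\cG_{m',n})_{m'}$ to replace $\bE[Z \mid \cG_{m',n}]$ by $Z$ in $L^1$ yields the identity. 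The main obstacle is exactly this latter identity for $W$, which rests on disentangling $\cG_{m',n}$ into a ``past'' piece $\cG_{m,n} \subseteq \cH_{(m]}$ and a ``future'' piece $\cH_{(m,m']}$ independent of the past; once this decomposition is in hand the conditional expectation collapses cleanly, and the remainder is two applications of martingale convergence.
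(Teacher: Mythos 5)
Your proof is correct, and it rests on the same two pillars as the paper's argument: the independence of the $\cH_j$ (which lets one collapse $\bE[\,\cdot \mid \cG_{m,n}\vee\cH_{m+1}\vee\cdots\vee\cH_{m'}]$ onto $\bE[\,\cdot\mid\cG_{m,n}]$) together with forward and backward martingale convergence. But you organize them around a different pivot. The paper reduces, via a monotone class argument, to $Z$ measurable with respect to $\bigvee_{m=0}^M\bigcap_n\cG_{m,n}=\bigcap_n\cG_{M,n}$ and then evaluates $\bE[Z\mid\bigcap_n\bigvee_m\cG_{m,n}]$ through the double limit $\lim_n\lim_m\bE[Z\mid\cG_{m,n}]$, inserting the tower property with $\cG_{m,n}\subseteq\cG_{M,n}\vee\cH_{M+1}\vee\cdots\vee\cH_m$ at the inner stage. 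You instead take $Z$ measurable with respect to $\bigcap_n\bigvee_m\cG_{m,n}$ from the outset and isolate the identity $\bE[Z\mid\cH_0\vee\cdots\vee\cH_m]=\bE[Z\mid\cG_{m,n}]$, from which the hard inclusion follows by letting $m\to\infty$ along the filtration $(\cH_0\vee\cdots\vee\cH_m)_{m}$. Your version attacks the nontrivial inclusion head-on (each bounded $\bigcap_n\bigvee_m\cG_{m,n}$-measurable $Z$ is exhibited explicitly as an almost sure limit of $\bigcap_n\cG_{m,n}$-measurable variables), and the key identity is a reusable statement in its own right; the price is the separate monotone-class/independence lemma for variables measurable with respect to $\cG_{m,n}\vee\cH_{m+1}\vee\cdots\vee\cH_{m'}$, which the paper needs anyway in a slightly different guise.

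One step should be made explicit. From ``$\bE[Z\mid\cH_0\vee\cdots\vee\cH_m]=\bE[Z\mid\cG_{m,n}]$ a.s.\ for every $n$'' you conclude that this random variable is $\bigcap_n\cG_{m,n}$-measurable. Having a $\cG_{m,n}$-measurable version for each $n$ separately does not, in general, produce a single $\bigcap_n\cG_{m,n}$-measurable version; this interchange of intersection and completion is exactly the delicate point the lemma is about (compare the cited result of von Weizs\"acker). The gap closes in one line with a tool you already invoke elsewhere: by backward martingale convergence, $\bE[Z\mid\cG_{m,n}]\to\bE[Z\mid\bigcap_n\cG_{m,n}]$ a.s.\ as $n\to\infty$, so the common value $\bE[Z\mid\cH_0\vee\cdots\vee\cH_m]$ equals $\bE[Z\mid\bigcap_n\cG_{m,n}]$ a.s., which is genuinely $\bigcap_n\cG_{m,n}$-measurable. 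With that sentence added, the argument is complete.
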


\begin{proof}
We first establish that
\[
\bigvee_{m \in \bN_0} \bigcap_{n \in \bN_0} \cG_{m,n}
\subseteq
\bigcap_{n \in \bN_0} \bigvee_{m \in \bN_0} \cG_{m,n}.
\]
It suffices to check for each $M \in \bN_0$ that
\[
\bigcap_{n \in \bN_0} \cG_{M,n} 
\subseteq 
\bigcap_{n \in \bN_0} \bigvee_{m \in \bN_0} \cG_{m,n},
\]
but this follows from the observation that
\[
\cG_{M,n} \subseteq \bigvee_{m \in \bN_0} \cG_{m,n}
\]
for every $n \in \bN_0$.

We now verify that
\[
\bigvee_{m \in \bN_0} \bigcap_{n \in \bN_0} \cG_{m,n}
\supseteq
\bigcap_{n \in \bN_0} \bigvee_{m \in \bN_0} \cG_{m,n}
\]
up to null sets. For this it suffices to show that  any bounded random variable $Z$ that 
is measurable with respect to $\bigvee_{m \in \bN_0} \bigcap_{n \in \bN_0} \cG_{m,n}$, satisfies 
the equality
\[
\bE\left[Z \, \Big | \, \bigcap_{n \in \bN_0} \bigvee_{m \in \bN_0} \cG_{m,n}\right]= Z \quad \text{ a.s. }
\]
%Because both sub-$\sigma$-fields are
%contained in the sub-$\sigma$-field $\bigvee_{m \in \bN_0} \cH_m$, it suffices
%to show for any bounded $\bigvee_{m \in \bN_0} \cH_m$-measurable
%random variable $Z$ that there is a 
%$\bigvee_{m \in \bN_0} \bigcap_{n \in \bN_0} \cG_{m,n}$-measurable
%version of 
%\[
%\bE\left[Z \, \Big | \, \bigcap_{n \in \bN_0} \bigvee_{m \in \bN_0} \cG_{m,n}\right].
%\]
By a monotone class argument, we may further suppose that $Z$ is
measurable with respect to 
$\bigvee_{m=0}^M \bigcap_{n \in \bN_0} \cG_{m,n}
=
\bigcap_{n \in \bN_0} \cG_{M,n}$
for some $M \in \bN_0$.  
Our assumptions guarantee that for all $n \in \mathbb N_0$ and $m > M$
\[
\cG_{m,n} \subseteq \cG_{M,n} \vee \cH_{M+1} \vee \cdots \vee \cH_m
\]
and 
\[\cG_{M,n} \subseteq \cH_{0} \vee \cdots \vee \cH_M.\]
From these inclusions, the backwards and forwards martingale
convergence theorems and the assumed independence of the $\cH_j$, $j=0,1,\ldots$ 
we see that
\[
\begin{split}
& \bE\left[Z \, \Big | \, \bigcap_{n \in \bN_0} \bigvee_{m \in \bN_0} \cG_{m,n}\right] \\
& \quad =
\lim_{n \rightarrow \infty}
\bE\left[Z \, \Big | \, \bigvee_{m \in \bN_0} \cG_{m,n}\right] \\
& \quad =
\lim_{n \rightarrow \infty}
\lim_{m \rightarrow \infty}
\bE\left[Z \, | \, \cG_{m,n}\right] \\
& \quad =
\lim_{n \rightarrow \infty}
\lim_{m \rightarrow \infty}
\bE\left[ 
\bE\left[Z \, | \, \cG_{M,n} \vee \cH_{M+1} \vee \cdots \vee \cH_m\right]       
\, | \, 
\cG_{m,n}\right] \\
& \quad =
\lim_{n \rightarrow \infty}
\lim_{m \rightarrow \infty}
\bE\left[ 
\bE\left[Z \, | \, \cG_{M,n}\right]       
\, | \, 
\cG_{m,n}\right] \\
& \quad =
\lim_{n \rightarrow \infty}
\bE\left[Z \, | \, \cG_{M,n}\right] \\
& \quad =
\bE\left[Z \, \Big | \, \bigcap_{n \in \bN_0} \cG_{M,n}\right] = Z \quad \text{ a.s. },\\
\end{split}
\]
%and the last random variable is   
%$\bigvee_{m \in \bN_0} \bigcap_{n \in \bN_0} \cG_{m,n}$-measurable,
as required.
\end{proof}

By the assumptions of the trickle-down construction,
$((Y_n^u)^v)_{n \in \bN_0}$ is nondecreasing $\bQ^{u,\xi}$-almost surely
for every $u \in \II$, $v \in \beta(u)$ and $\xi \in \SSS^u$.
Therefore, 
$(Y_\infty^u)^v := \lim_{n \rightarrow \infty} (Y_n^u)^v$
exists $\bQ^{u,\xi}$-almost surely in the usual one-point compactification
$\bN_0 \sqcup \{\infty\}$ of $\bN_0$.

Recall for the Mallows tree  and
$q$-binomial tree processes that $\II = \{0,1\}^\star$
and that the routing chains in both cases all had the property
$(Y_\infty^u)^{u0} < \infty$ and $(Y_\infty^u)^{u1} = \infty$,
$\bQ^{u,\xi}$-almost surely.  We see from the following result that it is
straightforward to identify the tail $\sigma$-field for a
trickle-down process if all of its routing chains exhibit this
kind of behavior.  Another example is the
Catalan tree process defined in Section~\ref{S:Catalan_trees} below -- see Proposition~\ref{P:Catalan_tail}.

\begin{proposition}
\label{P:tail_identify}
Suppose that $\beta(u)$ is finite for all $u \in \II$.
Fix $x \in \SSS$.   Suppose that 
$\#\{v \in \beta(u) : (Y_\infty^u)^v = \infty\} = 1$,
$\bQ^{u,x^u}$-a.s. for all $u \in \II$.
Then, the tail $\sigma$-field
\[
\bigcap_{m \in \bN_0} \sigma\{X_n : n \ge m\}
\]
is generated by $X_\infty := (X_\infty^u)_{u \in \II}$
up to $\bP^x$-null sets.
\end{proposition}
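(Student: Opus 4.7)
Writing $\cT := \bigcap_m \sigma\{X_n : n \ge m\}$ for the tail $\sigma$-field, the inclusion $\sigma(X_\infty) \subseteq \cT$ is immediate, since for each $u \in \II$ the pointwise limit $X_\infty^u = \lim_{k \to \infty} X_{k+m}^u$ lies in $\sigma(X_k : k \ge m)$ for every $m$. The substance of the proposition is the reverse inclusion, which I plan to obtain by first using Lemma~\ref{L:commuting_sigma-fields} to reduce to the case where $\II$ is replaced by a finite hereditary subset, and then exploiting the explicit structure of the trickle-down dynamics in that finite case.

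For the reduction, enumerate $\II = \{u_0, u_1, \ldots\}$ with $u_0 = \hat 0$ so that each initial segment $\JJ_m := \{u_0, \ldots, u_m\}$ is hereditary in the sense of Remark~\ref{R:hereditary}; this is possible because $\{v \in \II : v \le u\}$ is finite for every $u$ (there are only finitely many directed paths from $\hat 0$ to $u$, each of finite length). Set $\cH_m := \sigma(Y^{u_m})$, so the $\cH_m$ are independent under $\bP^x$, and $\cG_{m,n} := \sigma(X_k^v : v \in \JJ_m,\, k \ge n)$. Then $\cG_{m,n}$ is increasing in $m$ and decreasing in $n$, one has $\cG_{0,0} = \sigma(Y^{\hat 0}) = \cH_0$ because $X_k^{\hat 0} = Y_k^{\hat 0}$, and the key containment $\cG_{m+1,n} \subseteq \cG_{m,n} \vee \cH_{m+1}$ follows from $X_k^{u_{m+1}} = Y^{u_{m+1}}(a_k^{u_{m+1}})$ together with the fact that, by hereditariness, the clock $a_k^{u_{m+1}}$ is $\sigma(X_k^v : v \in \alpha(u_{m+1})) \subseteq \cG_{m,n}$-measurable for $k \ge n$. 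Lemma~\ref{L:commuting_sigma-fields} then gives
\[
\cT \;=\; \bigcap_n \bigvee_m \cG_{m,n} \;=\; \bigvee_m \bigcap_n \cG_{m,n} \;=\; \bigvee_m \cT_m
\]
up to null sets, where $\cT_m$ denotes the tail $\sigma$-field of the Markov chain $(X_n|_{\JJ_m})_n$ afforded by Remark~\ref{R:hereditary}, and it therefore suffices to show $\cT_m \subseteq \sigma(X_\infty|_{\JJ_m})$ modulo nulls for each $m$.

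For the finite case, the hypothesis combined with the finiteness of $\JJ_m$ produces an a.s.\ finite random time $T$ such that, for every $n \ge T$ and every $u \in \JJ_m$, one has $X_n^u = X_\infty^u$ when $a_\infty^u < \infty$, and when $a_\infty^u = \infty$ the coordinate $(X_n^u)^w$ equals $(X_\infty^u)^w$ for each $w \ne v^\star_u$ (the unique coordinate with $(X_\infty^u)^{v^\star_u} = \infty$) while $(X_n^u)^{v^\star_u}$ increases by exactly $1$ at each step. For $A \in \cT_m$, the Markov property gives $\bP^x(A \mid X_n|_{\JJ_m}) = \phi(X_n|_{\JJ_m})$ for a bounded harmonic function $\phi$, and martingale convergence yields $\phi(X_n|_{\JJ_m}) \to \mathbf{1}_A$ almost surely. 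The main obstacle I anticipate is identifying this almost sure limit as $\bar\phi(X_\infty|_{\JJ_m})$: for $n \ge T$ the state $X_n|_{\JJ_m}$ depends not only on $X_\infty|_{\JJ_m}$ but also on the pre-stabilization offsets $\{a_T^u\}$ and on $T$ itself, and one must exploit the shift-invariance of $A$ under $n \mapsto n+1$, together with the fact that on the spine each $v^\star_u$-coordinate eventually visits every integer exactly once, to force $\phi$ to be asymptotically constant along spine trajectories and hence a function of $X_\infty|_{\JJ_m}$ alone.
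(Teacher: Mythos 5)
Your reduction to the finite index sets $\JJ_m$ via Lemma~\ref{L:commuting_sigma-fields} is exactly the paper's: the same enumeration of $\II$ refining the partial order, the same choices $\cH_m = \sigma(Y^{u_m})$ and $\cG_{m,n} = \sigma\{X_k^v : v \in \JJ_m,\, k \ge n\}$, and your verification of $\cG_{m+1,n} \subseteq \cG_{m,n} \vee \cH_{m+1}$ through the measurability of the clock $a_k^{u_{m+1}}$ with respect to the parents' coordinates is correct. The problem is the finite-dimensional step, which you do not actually complete: after writing $\bP^x(A \mid X_n|_{\JJ_m}) = \phi(X_n|_{\JJ_m}) \to \mathbf{1}_A$, you describe the identification of the almost sure limit as a function of $X_\infty|_{\JJ_m}$ as an ``obstacle'' to be overcome by exploiting shift-invariance of $A$ and asymptotic constancy of $\phi$ along spine trajectories, but you never carry this out. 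As it stands, the inclusion $\cT_m \subseteq \sigma(X_\infty|_{\JJ_m})$ --- which is the whole content of the proposition --- is missing its crux.

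Moreover, the obstacle you describe is largely illusory, and the tools you propose are not the right ones. What you are missing is that for $n$ beyond an almost surely finite time the restricted state $X_n|_{\JJ_m}$ is a deterministic function $\Phi_n(X_\infty|_{\JJ_m})$ of the limit and of $n$ alone: the clocks satisfy $A_n^{\hat 0} = n$ and $A_n^u = (\sum_{v \in \alpha(u)} (X_n^v)^u - 1)_+$, so proceeding down the partial order one finds that each frozen coordinate of $X_n^u$ equals the corresponding coordinate of $X_\infty^u$, the unique unfrozen coordinate equals $A_n^u$ minus a sum of frozen ones, and $A_n^u$ is in turn a function of $(X_\infty|_{\JJ_m}, n)$ by induction. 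Neither $T$ nor any ``pre-stabilization offset'' enters the formula; $T$ only bounds its range of validity. Consequently $\mathbf{1}_A = \lim_n \phi(\Phi_n(X_\infty|_{\JJ_m}))$ almost surely (for instance because $\bP\{\phi(X_n|_{\JJ_m}) \ne \phi(\Phi_n(X_\infty|_{\JJ_m}))\} \le \bP\{T > n\} \to 0$, so a subsequence converges almost surely to $\mathbf{1}_A$), which gives the required measurability with no appeal to shift-invariance or to harmonic functions at all. This is essentially what the paper does, via an induction over the vertices $u_0, \ldots, u_p$ that tracks the random elements $V_p$ and the limits $A_\infty^{u_p}$, rather than through the martingale $\phi(X_n)$; to repair your argument you should replace your final paragraph with that explicit stabilization computation.
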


\begin{proof}
By the standing hypotheses on $\II$ and the assumption that
$\beta(u)$ is finite for all $u \in \II$, we can list
$\II$ as $(u_p)_{p \in \bN_0}$ in such a way that
$u_p \le u_q$ implies $p \le q$ (that is, we can put
a total order on $\II$ that refines the partial order $\le$
in such a way that the resulting totally ordered set has the
same order type as $\bN_0$).  For each $p \in \bN_0$, put 
$\JJ_p := \{u_0, \ldots, u_p\}$.
By Remark~\ref{R:hereditary}, each process $((X_n^u)_{u \in \JJ_p})_{n \in \bN_0}$
is a Markov chain.

Now,
\[
\bigcap_{m \in \bN_0} \sigma\{X_n : n \ge m\}
=
\bigcap_{m \in \bN_0} \bigvee_{p \in \bN_0} 
\sigma\{X_n^u : u \in \JJ_p, \, n \ge m\}.
\]
By construction,
\[
\sigma\{X_n^u : u \in \JJ_{p+1}, \, n \ge m\}
\subseteq
\sigma\{X_n^u : u \in \JJ_p, \, n \ge m\}
\vee 
\sigma\{Y_n^{u_{p+1}}:  n \in \bN_0\}.
\]
Thus, by Lemma~\ref{L:commuting_sigma-fields},
\[
\bigcap_{m \in \bN_0} \sigma\{X_n : n \ge m\}
=
\bigvee_{p \in \bN_0} \bigcap_{m \in \bN_0}  
\sigma\{X_n^u : u \in \JJ_p, \, n \ge m\}
\]
up to $\bP^x$-null sets. To show the claimed assertion, it thus suffices to check that 
for all $p\in \mathbb N_0$
\[\bigcap_{m \in \bN_0} \sigma\{X_n^u : u \in \JJ_p, \, n \ge m\}
=
\sigma\{X_\infty^u : u \in \JJ_p\}.
\]
We establish this via induction as follows.

For brevity we 
%To simplify notation,
suppose that $x^u = (0,0,\ldots)$ for all $u \in \II$.
In this way we avoid the straightforward but somewhat tedious notational complications 
of the general case.
%The proof in the general case involves no further new ideas and is left
%to the reader.

By assumption, there is a $\bP^x$-a.s. unique random element 
$V_0 \in \beta(u_0) = \beta(\hat 0)$ such that
$(X_\infty^{u_0})^{V_0} = \infty$.  
With $\bP^x$-probability one,
\[
(X_n^{u_0})^v
=
\begin{cases}
(X_\infty^{u_0})^v, & \text{if $v \ne V_0$},\\
n - \sum_{w \ne V_0} (X_\infty^{u_0})^w, & \text{if $v = V_0$}.
\end{cases}
\]
for all $v \in \beta(u_0)$ and $n$ sufficiently large.
Thus, 
$\bigcap_{m \in \bN_0} \sigma\{X_n^u : u \in \JJ_0, \, n \ge m\}$
is 
%certainly 
generated by $(X_\infty^u)_{u \in \JJ_0} = X_\infty^{\hat 0}$ up to $\bP^x$-null sets.

Suppose we have shown for some $p \in \bN_0$ that 
$\bigcap_{m \in \bN_0} \sigma\{X_n^u : u \in \JJ_p, \, n \ge m\}
=
\sigma\{X_\infty^u : u \in \JJ_p\}$ 
up to $\bP^x$-null sets.

Now,
\[
A_n^{u_{p+1}}
=
\left(\sum_{u \in \alpha(u_{p+1})} (X_n^u)^{u_{p+1}} - 1\right)_+.
\]
Because 
$\alpha(u_{p+1}) \subseteq \JJ_p$,
it follows from our inductive hypothesis that 
\[
\bigcap_{m \in \bN_0} \sigma\{A_n^{u_{p+1}} : n \ge m\} 
\subseteq
\bigcap_{m \in \bN_0} \sigma\{X_n^u : u \in \JJ_p, \, n \ge m\}
=
\sigma\{X_\infty^u : u \in \JJ_p\}
\]
up to $\bP^x$-null sets.
In particular, the $\bN_0 \sqcup \{\infty\}$-valued random variable
\[
A_\infty^{u_{p+1}}
:= 
\lim_{n \rightarrow \infty} A_n^{u_{p+1}}
\]
is $\sigma\{X_\infty^u : u \in \JJ_p\}$-measurable
up to $\bP^x$-null sets.

On the event $\{A_\infty^{u_{p+1}} = \infty\}$,
there is a unique random element $V_{p+1} \in \beta(u_{p+1})$
such that $(X_\infty^{u_{p+1}})^{V_{p+1}} = \infty$ and
\[
(X_n^{u_{p+1}})^v
=
\begin{cases}
(X_\infty^u)^v, & \text{if $v \ne V_{p+1}$},\\
A_n^{u_{p+1}} - \sum_{w \ne V_{p+1}} (X_\infty^{u_{p+1}})^w, & \text{if $v = V_{p+1}$},
\end{cases}
\]
for each $v \in \beta(u_{p+1})$ and $n$ sufficiently large.
Note that 
\[
\{A_\infty^{u_{p+1}} = \infty, \, v = V_{p+1}\}
=
\{(X_\infty^{u_{p+1}})^v = \infty\}
\]
for each $v \in \beta(u_{p+1})$.  It follows that
\[
\begin{split}
& \bigcap_{m \in \bN_0} 
\left[
\sigma\{X_n^u : u \in \JJ_p, \, n \ge m\}
\vee
\sigma\{X_n^{u_{p+1}} \boldsymbol{1} \{A_\infty^{u_{p+1}} = \infty\} : n \ge m\}
\right] \\
& \quad \subseteq
\sigma\{X_\infty^u : u \in \JJ_{p+1}\} \\
\end{split}
\]
up to $\bP^x$-null sets.

Furthermore, on the event $\{A_\infty^{u_{p+1}} < \infty\}$, 
$X_n^{u_{p+1}} = X_\infty^{u_{p+1}}$ for all $n$ sufficiently large,
and so 
\[
\begin{split}
& \bigcap_{m \in \bN_0} 
\left[
\sigma\{X_n^u : u \in \JJ_p, \, n \ge m\}
\vee
\sigma\{X_n^{u_{p+1}} \boldsymbol{1} \{A_\infty^{u_{p+1}} < \infty\} : n \ge m\}
\right] \\
& \quad \subseteq
\sigma\{X_\infty^u : u \in \JJ_{p+1}\} \\
\end{split}
\]
up to $\bP^x$-null sets. This completes the induction step, and thus the proof of the proposition.
\end{proof}

\begin{remark}
\label{R:single_spine}
When $\II$ is a tree and we are in the situation of Proposition \ref{P:tail_identify}, 
then $X_\infty$ may be thought of 
as an infinite rooted subtree of $\II$ with a single infinite 
directed path from the root $\hat 0$.  Regarding $(X_n)_{n \in \bN_0}$
as a tree-valued process, we have $X_\infty = \bigcup_{n \in \bN_0} X_n$.
Equivalently, $X_\infty$ is the limit of the finite subsets $X_n$ of
$\II$ if we identify the subsets of $\II$ with the Cartesian product
$\{0,1\}^\II$ in the usual way and equip the latter space with
the product topology.
\end{remark}

\section{The Catalan tree process}
\label{S:Catalan_trees}

Let $\SSS_n$ denote the set of subtrees of the complete rooted binary
tree $\{0,1\}^\star$ that contain the root $\emptyset$ and have
$n$ vertices.  The set $\SSS_n$ has cardinality $C_n$, where
\[
C_n:=\frac{1}{n+1}\binom{2n}{n}
\]
is the {\em $n^{\mathrm{th}}$ Catalan number}.  A special case
of a construction in \cite{MR2060629} gives a Markov chain 
$(X_n)_{n \in \bN_0}$ with state space the set of finite rooted subtrees
of $\{0,1\}^\star$ such that 
\begin{equation}
\label{uniform_distn}
\bP^{\{\emptyset\}}\{X_n = \ttt\} = C_{n+1}^{-1}, \quad \ttt \in \SSS_n;
\end{equation} 
that is, if the chain begins in the trivial tree
$\{\emptyset\}$, 
then its value at time $n$ is uniformly distributed on $\SSS_{n+1}$.
Moreover, the construction in \cite{MR2060629} is an instance of the
trickle-down construction in which $\II = \{0,1\}^\star$ and all of the
routing chains have the same dynamics.

For the sake of completeness, we reprise some of the 
development from \cite{MR2060629}.  
Begin with the ansatz that there is indeed
a trickle-down process $(X_n)_{n \in \bN_0}$
with $\II = \{0,1\}^\star$ and identical routing chains such that
\eqref{uniform_distn} holds.  Identify
the state spaces of the routing chains with $\bN_0 \times \bN_0$
and write $Q$ for the common transition matrix.
We have
\begin{equation}
\label{eq:catalan}
\begin{split}
Q^n((0,0),(k, n-k))
& =
\bP^{\{\emptyset\}}\left\{\# X_n(0)=k, \, \# X_n(1) =n-k \right\}  \\
& = \frac{C_k C_{n-k}}{C_{n+1}}, \quad n \in \bN,\; k=0,\ldots,n. \\
\end{split}
\end{equation}

Now,  
\[
Q((j,i), (j,i+1)) = Q((i,j),(i+1,j) = 1 - Q((i,j),(i,j+1))
\]
by symmetry,
\[
\begin{split}
Q((0,j),(0,j+1)) 
&=  
\frac
{\bP^{\{\emptyset\}} \left\{\#X_{j+1}(0)=0, \, \#X_{j+1}(1) = j+1\right\}}
{\bP^{\{\emptyset\}} \left\{\#X_{j}(0)=0, \, \#X_{j}(1) = j\right\}} \\
&=
\frac{C_0 C_{j+1}}{C_{j+2}} \bigg / \frac{C_0 C_{j}}{C_{j+1}}  
= \frac{(j+3)(2j+1)}{(j+2)(2j+3)},\\
\end{split}
\] 
and
\[
\begin{split}
& \bP^{\{\emptyset\}} \left\{\#X_{i+j}(0)=i, \, \#X_{i+j}(1) = j\right\} \\
& \quad =
\bP^{\{\emptyset\}} \left\{\#X_{i+j-1}(0)=i-1, \, \#X_{i+j-1}(1) = j\right\}
Q((i-1,j), (i,j)) \\
& \qquad +
\bP^{\{\emptyset\}} \left\{\#X_{i+j-1}(0)=i, \, \#X_{i+j-1}(1) = j-1\right\}
Q((i,j-1), (i,j)) \\
\end{split}
\]
where the appropriate probabilities on the right side are $0$ if $i=0$ or $j=0$,
so that
\[
  Q(i,j-1), (i,j))\, =\frac{2j-1}{j+1}\left(\frac{i+j+2}{2i+2j+1}
                     - \left(1-Q((i-1,j), (i,j))\right)\frac{i+1}{2i-1}\right).
\]

Combining these observations, we can calculate the entries of the
transition matrix $Q$ iteratively and, as observed in \cite{MR2060629},
the entries of $Q$ are non-negative and the rows of $Q$ sum to one.
We refer to the resulting Markov chain as the {\em Catalan urn process}. 
Note that if the random tree $T$ is uniformly
distributed on $\SSS_{n+1}$ then, conditional on 
the event $\{\#T(0) = k, \#T(1) = n-k\}$,
the random trees $\{u \in \{0,1\}^* : 0u \in T\}$ 
and $\{u \in \{0,1\}^* : 1u \in T\}$ are independent and uniformly distributed
on $\SSS_k$ and $\SSS_{n-k}$, respectively.  Thus,
a trickle-down construction with each routing chain given by the
Catalan urn process does indeed give a tree-valued chain
satisfying \eqref{uniform_distn}.

Observe that
\[
\lim_{n\to\infty}\frac{C_{n+1}}{C_n}= 4 .
\]
It follows from \eqref{eq:catalan} that
\[
\lim_{\ell \to \infty} Q^{k+\ell}((0,0), (k,\ell))
=
\lim_{\ell \to \infty} Q^{k+\ell}((0,0), (\ell,k))
=
4^{-(k+1)} C_k
\]
for all $k \in \bN_0$.  Moreover, 
$2 \sum_{k \in \bN_0}  4^{-(k+1)} C_k = 1$ from the well-known fact that
the generating function of the Catalan numbers is
\[
\sum_{k \in \bN_0} C_k x^k = \frac{2}{1 + \sqrt{1 - 4 x}}, 
\quad |x| < \frac{1}{4}.
\]
Hence, if $((Y_n', Y_n''))_{n \in \bN_0}$ is a Markov chain with
transition matrix $Q$ and laws $\bQ^{(y',y'')}$, then 
\[
(Y_\infty', Y_\infty'') 
:= \left(\lim_{n \to \infty} Y_n', \lim_{n \to \infty} Y_n''\right)
\in (\bN_0 \times \{\infty\}) \sqcup (\{\infty\} \times \bN_0), 
\quad \text{$\bQ^{(0,0)}$-a.s.},
\]
with 
\[
\bQ^{(0,0)}\{(Y_\infty', Y_\infty'') = (k,\infty)\}
=
\bQ^{(0,0)}\{(Y_\infty', Y_\infty'') = (\infty,k)\}
=
4^{-(k+1)} C_k, \quad k \in \bN_0.
\]

The following result is immediate from Proposition~\ref{P:tail_identify}.

\begin{proposition}
\label{P:Catalan_tail}
The tail $\sigma$-field
of the Catalan tree process 
$(X_n)_{n \in \bN_0}$ is generated up to null sets by the infinite random tree
$X_\infty := \bigcup_{n \in \bN_0} X_n$ under $\bP^{\{\emptyset\}}$.
\end{proposition}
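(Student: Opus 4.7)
The plan is to recognize this as a direct application of Proposition~\ref{P:tail_identify}, and to verify that its hypotheses hold for the Catalan tree process. Since $\II = \{0,1\}^\star$, we have $\beta(u) = \{u0, u1\}$, so $|\beta(u)| = 2$ for every vertex; this takes care of the finiteness condition. The substantive hypothesis is that for each $u \in \II$, under $\bQ^{u, x^u}$ with $x = \{\emptyset\}$ (so $x^u = (0,0)$ for all $u$), the routing chain $Y^u$ satisfies $\#\{v \in \beta(u) : (Y^u_\infty)^v = \infty\} = 1$ almost surely.

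First I would observe that this single-infinite-coordinate property has essentially been established immediately before the statement of the proposition: the display showing
\[
(Y_\infty', Y_\infty'') \in (\bN_0 \times \{\infty\}) \sqcup (\{\infty\} \times \bN_0), \quad \bQ^{(0,0)}\text{-a.s.},
\]
exactly asserts that precisely one of the two coordinates of $(Y^u_\infty)$ is infinite. This in turn relies on the ratio $C_{n+1}/C_n \to 4$ together with the identity $2\sum_{k\ge 0} 4^{-(k+1)} C_k = 1$, which shows that the total mass of the limiting distribution of $(Y^u_\infty)$ on $(\bN_0\times\{\infty\})\sqcup(\{\infty\}\times\bN_0)$ is one, leaving no mass on $\{\infty\}\times\{\infty\}$ or on $\bN_0\times\bN_0$. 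So the hypothesis of Proposition~\ref{P:tail_identify} is verified without any further work.

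Applying Proposition~\ref{P:tail_identify} then yields that the tail $\sigma$-field is generated, up to $\bP^{\{\emptyset\}}$-null sets, by the random vector $X_\infty = (X^u_\infty)_{u \in \II}$. It remains only to reconcile this vector-valued description with the union-of-trees description in the statement of the proposition. For this I would invoke Remark~\ref{R:single_spine}: because $\II = \{0,1\}^\star$ is a tree and the single-infinite-coordinate hypothesis holds at every vertex, the limiting vector $X_\infty$ may be identified with the infinite rooted subtree obtained as the increasing union $\bigcup_{n \in \bN_0} X_n$ in the product topology on $\{0,1\}^\II$.

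There is no real obstacle here; the proof is essentially bookkeeping, with the only non-trivial input being the limit computation for the Catalan urn, and that computation is already carried out in the paragraph preceding the proposition. A minor sanity check worth including is that the initial state $\{\emptyset\}$ does give $x^u = (0,0)$ for all $u \in \II$, so that the relevant law for the routing chain at $u$ is indeed $\bQ^{u,(0,0)}$, matching the setting in which the single-infinite-coordinate property was verified.
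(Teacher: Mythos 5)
Your proposal is correct and follows exactly the route the paper takes: the paper states that the result is immediate from Proposition~\ref{P:tail_identify}, having just established via the limit $C_{n+1}/C_n \to 4$ and the generating-function identity that $(Y_\infty',Y_\infty'')$ lands almost surely in $(\bN_0\times\{\infty\})\sqcup(\{\infty\}\times\bN_0)$, which is precisely the single-infinite-coordinate hypothesis you verify. Your additional remarks about the initial state and the identification of the limiting vector with $\bigcup_n X_n$ via Remark~\ref{R:single_spine} are the same bookkeeping the paper leaves implicit.
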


As we noted in Remark~\ref{R:single_spine}, the tree $X_\infty$ has a single
infinite path from the root $\emptyset$.  Denote this path by
$\emptyset = U_0 \to U_1 \to \ldots$. For $n \in \bN$, define $W_n \in \{0,1\}$ by
$U_n = W_1 \ldots W_n$.
It is apparent from the trickle-down construction and the discussion above
that the sequence $(W_n)_{n \in \bN}$ is i.i.d. with
$\bP\{W_n=0\} = \bP\{W_n=1\} = \frac{1}{2}$.  Moreover, if we set
$\bar W_n = 1 - W_n$ and put
\[
T_n := \{u \in \{0,1\}^\star : W_1 \ldots W_{n-1} \bar W_n u \in X_\infty\},
\]
so that $T_n$ is either empty or a subtree of $\{0,1\}^\star$
rooted at $\emptyset$, then the sequence $(T_n)_{n \in \bN}$
is i.i.d. and independent of $(W_n)_{n \in \bN}$ with
\[
\bP\{\#T_n = k\} = 2 \times 4^{-(k+1)} C_k, \quad k \in \bN_0,
\]
and
\[
\bP\{T_n = \ttt \, | \, \#T_n = k\} 
= \frac{1}{C_k}, \quad \ttt \in \SSS_k, \, k \in \bN.
\]

Note that if $(S_n)_{n\in\bN_0}$ is {\bf any} sequence of random subtrees
of $\{0,1\}^*$ such that $S_n$ is uniformly distributed on $\SSS_{n+1}$
for all $n \in \bN_0$, then $S_n$ converges in distribution to a random
tree that has the same distribution as $X_\infty$, where the notion of
convergence in distribution is the one that comes from thinking of
subtrees of $\{0,1\}^*$ as elements of the Cartesian product
$\{0,1\}^{\{0,1\}^*}$ equipped with the product topology 
--- see Remark~\ref{R:single_spine}.  
The convergence in distribution of such a sequence $(S_n)_{n\in\bN_0}$ 
and the above description of the limit distribution have already been obtained 
in~\cite{MR1940149} using different methods.
For a similar weak convergence result for uniform random trees, see \cite{MR607933}
and the survey \cite[Section~2.5]{MR2023650}. 
Also, if we define rooted finite $d$-ary trees for $d > 2$
as suitable subsets of $\{0,1,\ldots,d-1\}^\star$ 
in a manner analogous to the way we
have defined rooted finite binary trees, then
it is shown in \cite{MR2060629} that it is possible to construct
a Markov chain that grows by one vertex at each step and is uniformly
distributed on the set of $d$-ary trees with $n$ vertices
at step $n$ -- in particular, there is an almost sure (and hence
distributional) limit as $n \to \infty$ in the same sense
as we just observed for the uniform binary trees.  We have
not investigated whether this process is the result of a trickle-down
construction.  Lastly, we note that there are interesting ensembles
of trees that can't be embedded into a trickle-down construction
or, indeed, into any Markovian construction in which a single
vertex is added at each step; for example, it is shown in
\cite{MR2509643} that this is not possible for the
ensemble obtained by taking
a certain critical Galton-Watson tree with offspring distribution
supported on $\{0,1,2\}$ and conditioning the total 
number of vertices to be $n \in \bN$.

\bigskip\noindent
{\bf Acknowledgments.}  The authors thank Heinrich von Weizs\"acker
for his role in bringing them together to work on the problems
considered in the paper.  They also thank Sasha Gnedin 
and the referee for a number
of helpful comments and pointers to the literature.

\def\cprime{$'$}
\providecommand{\bysame}{\leavevmode\hbox to3em{\hrulefill}\thinspace}
\providecommand{\MR}{\relax\ifhmode\unskip\space\fi MR }
% \MRhref is called by the amsart/book/proc definition of \MR.
\providecommand{\MRhref}[2]{%
  \href{http://www.ams.org/mathscinet-getitem?mr=#1}{#2}
}
\providecommand{\href}[2]{#2}

\end{document}